\documentclass[11pt]{article}
\pagestyle{myheadings}
\usepackage{amsmath,amssymb,amsthm}
\usepackage{enumerate}  
\usepackage{float}
\usepackage{graphicx}
\usepackage[margin=1.0in]{geometry}
\usepackage{url}
\usepackage{xcolor}
\usepackage{caption}
\usepackage{hhline}
\usepackage{bm}
\usepackage[linesnumbered,ruled,algosection]{algorithm2e}
\usepackage{subcaption}
\usepackage{lineno}
\usepackage{url}
\usepackage{hyperref}
\usepackage{cleveref}

\SetKw{Continue}{continue}

\def \dis{\displaystyle}

\def \R{\mathbb{R}} 

\def \N{\mathbb{N}}


\def \e{\varepsilon}

\def \W{\Omega}
\def \phi{\varphi}


\def \12{\dis\frac{1}{2}}

\def \1{\mathbbm{1}}


\def \<{\left<}
\def \>{\right>}

\def \mA{\mathcal{A}}
\def \mB{\mathcal{B}}
\def \mE{\mathcal{E}}
\def \mT{\mathcal{T}}

\def \bn{n}

\def \mL{\mathcal{L}}

\def \grad{\nabla}

\def \mM{\mathcal{M}}

\def \UDLRA{U_{\rm{\tiny DLRA}}}

\def \UFE{U_{\rm{FE}}^{n+1}}
\def \RLR{R_{\rm{LR}}}
\renewcommand{\tilde}{\widetilde}
\renewcommand{\hat}{\widehat}
\SetKwComment{Comment}{/* }{ */}


\def \lavg{\{\!\{}
\def \ravg{\}\!\}}
\def \ljmp{[\![}
\def \rjmp{]\!]}


\DeclareMathOperator*{\argmin}{argmin}

\DeclareMathOperator*{\Col}{Col}
\def \dx[#1]{\ensuremath{\operatorname{d}\!{#1}}}

\numberwithin{equation}{section}
\newtheorem{defn}{Definition}
\numberwithin{defn}{section}
\newtheorem{remark}{Remark}
\numberwithin{remark}{section}
\newtheorem{prop}{Proposition}
\numberwithin{prop}{section}
\newtheorem{lemma}{Lemma}
\numberwithin{lemma}{section}
\newtheorem{theorem}{Theorem}
\numberwithin{theorem}{section}

\numberwithin{corollary}{section}
\numberwithin{figure}{section}
\numberwithin{table}{section}

\crefname{algocf}{alg.}{algs.}
\Crefname{algocf}{Algorithm}{Algorithms}

\title{A Predictor-Corrector Strategy for Adaptivity in Dynamical Low-Rank Approximations\thanks{Notice: This manuscript has been authored by UT-Battelle, LLC under Contract No. DE-AC05-00OR22725 with the U.S. Department of Energy.  The publisher, by accepting the article for publication, acknowledges that the U.S. Government retains a non-exclusive, paid up, irrevocable, world-wide license to publish or reproduce the published form of the manuscript, or allow others to do so, for U.S. Government purposes. The DOE will provide public access to these results in accordance with the DOE Public Access Plan (\url{http://energy.gov/downloads/doe-public-access-plan}).
}}

\author{
Cory D.\ Hauck\thanks{Mathematics in Computation Section, Computer Science and Mathematics Division, Oak Ridge National Laboratory, Oak Ridge, TN 37831, USA and 
Mathematics Department, University of Tennessee, Knoxville, TN 37996, USA (\href{mailto:hauckc@ornl.gov}{hauckc@ornl.gov}).} 
\and Stefan R.\ Schnake\thanks{Mathematics in Computation Section, Computer Science and Mathematics Division, Oak Ridge National Laboratory, Oak Ridge, TN 37831, USA  (\href{mailto:schnakesr@ornl.gov}{schnakesr@ornl.gov}).}
}

\begin{document}

\maketitle

\begin{abstract}
In this paper, we present a predictor-corrector strategy for constructing rank-adaptive dynamical low-rank approximations (DLRAs) of matrix-valued ODE systems.  The strategy is a compromise between (i) low-rank step-truncation approaches that alternately evolve and compress solutions and (ii) strict DLRA approaches that augment the low-rank manifold using subspaces generated locally in time by the DLRA integrator.  The strategy is based on an analysis of the error between a forward temporal update into the ambient full-rank space, which is typically computed in a step-truncation approach before re-compressing, and the standard DLRA update, which is forced to live in a low-rank manifold.  We use this error, without requiring its full-rank representation, to correct the DLRA solution.  A key ingredient for maintaining a low-rank representation of the error is a randomized singular value decomposition (SVD), which introduces some degree of stochastic variability into the implementation.  The strategy is formulated and implemented in the context of discontinuous Galerkin spatial discretizations of partial differential equations and applied to several versions of DLRA methods found in the literature, as well as a new variant. Numerical experiments comparing the predictor-corrector strategy to other methods demonstrate robustness to overcome short-comings of step truncation or strict DLRA approaches:  the former may require more memory than is strictly needed while the latter may miss transients solution features that cannot be recovered.  The effect of randomization, tolerances, and other implementation parameters is also explored.
\end{abstract}

\section{Introduction}
Low-rank methods are a class of numerical tools that are used to represent high-dimensional data with low-dimensional objects.  In many scenarios, they offer substantial memory savings in the storage and transmission of data which can be leveraged to construct efficient models of high-dimensional systems \cite{grasedyck2013literature}.  Two common low-rank methods for dynamical systems are step-truncation methods and dynamical low-rank approximation (DLRA).   

Step truncation methods are a generic tool for sparse approximation of dynamical systems \cite{schneider1997comparison,schaeffer2013sparse,guo2022low,dektor2021rank}.  These methods update a sparse initial condition in the ambient space and then compress the solution back to a sparse representation through a truncation procedure that removes elements considered unnecessary for maintaining a certain level of accuracy. Low-rank representations are an effective way to enforce a sparse structure.  

Dynamical low-rank approximations solve an approximate model that, in the absence of adaptivity, maintains a fixed rank for all time.  First introduced as the Dirac-Freknel-McLachlan variational principle in the 1930s \cite{dirac1930note,frenkel1934wave}, DLRA methods were revived in the 2000s for the simulation of high-dimensional quantum systems and biological cellular systems \cite{jahnke2008dynamical,beck2000multiconfiguration,lubich2005variational}. Even more recently, DLRA methods have been applied to kinetic equations \cite{einkemmer2018low,peng2022sweep,peng2020low,einkemmer2021efficient,einkemmer2021asymptotic,ding2021dynamical}, hyperbolic problems with uncertainty \cite{kusch2022dynamical}, and even neural network training \cite{schotthofer2022low}.  DLRA methods evolve a tensor-valued ODE on a rank-$r$ manifold $\mM_r$ by projecting the time derivative onto the tangent space of $\mM_r$.  This forces the DLRA to be rank-$r$ for all time and can make certain high-dimensional problems computationally tractable \cite{jahnke2008dynamical}.  

In \cite{lubichkoch2007DLRA}, DLRA methods were rigorously analyzed showing that they are robust and accurate.  Additionally, in \cite{lubichkoch2007DLRA} the authors show the low-rank factors of the DLRA solution solve a coupled system, but this system is often numerically unstable to march forward in time unless an unreasonably small timestep is taken \cite{lubich2014projector}.  Recent work has produced several numerically stable and accurate DLRA temporal integrators \cite{lubich2014projector,kieri2016discretized,kieri2019projection,ceruti2021unconventional}.  Extensions of these methods to asymptotically preserving or conservative schemes have also been of interest \cite{ding2021dynamical,einkemmer2021mass,peng2021high}.

The current work focuses on adaptivity in the DLRA framework.  In practice, the rank required to accurately approximate a dynamical system may change in time, due to non-linear phenomena such as filamentation and turbulence, initial and boundary layers, contributions from sources, and coordinate-based numerical artifacts. Numerical methods that allow the rank of the solution to change in time are called \textit{rank-adaptive} methods and are vital to efficiently resolve these dynamics.  Several rank-adaptive methods have recently been developed \cite{ceruti2022rank,guo2022low,dektor2021rank,hu2022adaptive}, and, in general, these methods take one of two strategies.  Some stay completely in the DLRA regime, where they are prone to modelling error by the low-rank projection \cite{ceruti2022rank}.  Others construct a full-rank object that is accurate and compress back to a low-rank object \cite{guo2022low,guo2022conservative}, but this can be inefficient if the rank of the solution does not change too much.

In the current paper, we develop a rank-adaptive predictor-corrector (RAPC) strategy for existing DLRA methods \cite{ceruti2021unconventional,kieri2019projection} that strikes a balance between the advantages of step truncation methods and a strict DLRA-based approach.  The DLRA serves as the predictor and its error to the full-rank forward Euler update serves as the corrector.  We use a randomized SVD \cite{halko2011finding} to efficiently approximate the corrector by a low-rank object, and then modify the rank of the predictor utilizing low-rank information from the corrector.  This yields a strategy that both allows for information outside of the low-rank space to enter the solution and does not explicitly construct a high-rank object.  The rank-adaptive tensor scheme of \cite{dektor2021rank} follows a similar strategy as the RAPC strategy, but the estimator is only used to determine the possible rank at the next time step; the predictor-corrector strategy uses the corrector information to fill in the rank deficiency of the DLRA update if need be.  Numerical results illustrated in this paper show that the predictor-corrector strategy nearly matches the rank of the rank-adaptive DLRA method of \cite{ceruti2022rank}.  While the RAPC method uses more memory than rank-adaptive DLRA methods, we present an example where the DLRA regime is not rich enough to accurately approximate a solution while the RAPC approach is still accurate.

The paper is organized as follows.  In \Cref{sect:notation}, we introduce preliminary notation, an illustrative two-dimensional solid-body rotation problem, and a standard discontinuous Galerkin discretization that serves as a ``full-rank" numerical reference.  In \Cref{sect:mateval}, we introduce separability of a bilinear form, show how low-rank methods can take advantage of separability, and finally show that the 2D solid-body problem is a sum of separable forms.  In \Cref{sect:DLRA}, we summarize the DLRA method, motivate several numerical integrators recently developed in the DLRA framework, and construct a new integrator that is accurate and has favorable properties with respect to the predictor-corrector strategy.  In \Cref{sect:adapt}, we list several rank-adaptive integrators and give a detailed motivation and construction of the predictor-corrector strategy.  In \Cref{sect:results}, we compare the predictor-corrector strategy against other rank-adaptive integrators on several related test problems.  Additionally, we test the effects of the randomized SVD on the accuracy of the rank-adaptive DLRA solution.

\section{Notation and Preliminaries}\label{sect:notation}

\subsection{Notation}

For $z \in \{x,y\}$, let $\W_z=(-L,L)\subset \R$ for some $L>0$ be an open bounded domain with boundary $\partial\W_z=\{-L,L\}$.  Denote the computational domain $\W=\W_x\times\W_y$ and let $(\cdot,\cdot)_D$ denote the standard $L^2$ inner product on $L^2(D)$ for any domain $D$.  

Given a scalar mesh parameter $h_z > 0$, let $\mathcal{T}_{z,h}:=\mathcal{T}_{z,h_z}$ be a mesh on $\W_z$ with interior skeleton $\mathcal{E}_{z,h}^\mathrm{I}:=\mathcal{E}_{z,h_z}^\mathrm{I}$. Given an edge $e\in\mE_{z,h}^\mathrm{I}$ with $e=T^+\cap T^-$ for $T^+,T^-\in \mT_{z,h}$, let $\bn^\pm$ be the outward normal vector on $T^\pm$.  Given a scalar function $q$ defined on $\partial T^+\cap e$ and $\partial T^-\cap e$ define the average and jump of $q$, respectively, by
\begin{equation}
    \lavg q\ravg = \frac{1}{2}(q\big|_{T^+} + q\big|_{T^-}) \qquad\text{and}\qquad \ljmp q \rjmp = q\big|_{T^+}\bn^+ + q\big|_{T^-x}\bn^-.
\end{equation}

Let $V_{z,h}:=V^{k_z}_{z,h_z}$ be standard the discontinuous Galerkin Lagrange finite element space:
\begin{equation}
V^{k_z}_{z,h_z} = \{q\in L^2(\W_z):q\big|_T \in \mathbb{Q}_{k_z}(T)\quad \forall T\in \mathcal{T}_{z,h}\},
\end{equation}
where $\mathbb{Q}_{k_z}(T)$ is the set of all polynomials on $T$ with degree less than or equal to $k_z$ in each coordinate.  Let $V_h:=V_{x,h}\otimes V_{z,h}$.  Additionally let $\left<\cdot,\cdot\right>_{\mE_{z,h}^\mathrm{I}}:=\sum_{e\in\mE_{z,h}^\mathrm{I}}\left<\cdot,\cdot\right>_e$.

Given two $m\times n$ matrices $A,B$, let
\begin{equation}
(A,B)_F = \text{tr}(B^TA) = \sum_{i=1}^m\sum_{j=1}^na_{ij}b_{ij},
\end{equation}
be the Frobenius inner product with induced norm
$
\|A\|_F = \sqrt{(A,A)_F}.
$
Denote by $\{\sigma_{i}(A)\}_{i=1}^{\min\{m,n\}}$ the singular values of $A$, arranged in a non-increasing order with respect to $i$.  

We utilize MATLAB-style slicing notation for matrices.  If $A$ is an $m\times n$ matrix, $I=\{i_1,\ldots,i_k\}$ is a subset of $\{1,\ldots,m\}$ and $J=\{j_1,\ldots,j_k\}$ is a subset of $\{1,\ldots,n\}$, then $\tilde{A}:=A(I,J)$ is a $k\times l$ matrix defined by $\tilde{A}_{kl} = A_{i_k,j_l}$.  The notation $i\!:\!k\subseteq\{1,\ldots,n\}$ is a shorthand for the subset $\{i,i+1,\ldots,k\}$, $:$ is taken to mean $1\!:\!n$,
and $\text{diag}(A_1,A_2,\ldots,A_N)$ is a block diagonal matrix.

\subsection{The PDE and its Discretization}
\label{subsect:solid-body}

The techniques presented in this paper can be applied to many advective or diffusive PDEs; however, we will focus on a solid-body rotation problem as a model for the topics covered in \Cref{sect:mateval}.  Other operators of physical interest are discussed in \Cref{sect:results}.  

The 2D solid-body rotation is defined by the following system:
\begin{subequations}
\begin{alignat}{3} \label{eqn:pde}
    \frac{\partial u}{\partial t} - y\cdot\grad_xu + x\cdot\grad_y u &= s,
    &&\qquad(x,y)\in\W, &&\quad t>0; \\
    u(x,y,t) &= g_x(x,y,t),
    &&\qquad(x,y) \in\partial\W_x^-, &&\quad  t>0; \\
    u(x,y,t) &= g_y(x,y,t),
    &&\qquad(x,y) \in\partial\W_y^-, &&\quad t>0; \\
    u(x,y,0) &= u_0(x,y),
    &&\qquad(x,y)\in\W, &&
\end{alignat}
\end{subequations}
where $s(\cdot,\cdot,t)\in L^2(\W)$ is a source term and $g_z$ is the inflow data.  Here $\partial\W_z^{\pm}$ is the outflow (+) and inflow (-) boundaries on $\partial\W_z$, given by
\begin{align}
\label{eqn:inflow_y_def}
    \partial\W_x^{\pm}- = \{(x,y)\in\partial\W_x\times\W_y : \pm y\cdot \bn(x) < 0\}~~\text{and}~~
    \partial\W_y^{\pm} = \{(x,y)\in\W_x\times\partial\W_y : \mp x\cdot \bn(y) < 0\}.
\end{align}
We discretize \eqref{eqn:pde} by the standard discontinuous Galkerin method with upwind fluxes. The discretized problem is as follows:  Find $u_h(\cdot,\cdot,t)\in V_h$ such that 

\begin{equation}\label{eqn:num_prob}
    \bigg(\frac{\partial u_h}{\partial t},q_h\bigg)_\W = \mA(u_h,q_h) + \mathcal{G}(q_h,t) + (s,q_h)_\W \quad\forall q_h\in V_h,
\end{equation}
where $\mA:V_h\times V_h\to\R$ and $\mathcal{G}:V_h\times\R^+\to\R$ are given by
\begin{align} \label{eqn:bilinear}
\begin{split}
    \mA(w_h,q_h) &= -(-yw_h,\grad_x q_h)_\W + \left<-y\lavg w_h\ravg + \tfrac{1}{2}|y|\ljmp w_h\rjmp,\ljmp q_h\rjmp\right>_{\mE_{x,h}^\mathrm{I}\times\W_y} \\
    &\quad+\left<-yw_h,\bn q_h\right>_{\partial\W_x^+} \\
    &\quad-(xw_h,\grad_v q_h)_\W + \left<x\lavg w_h\ravg + \tfrac{1}{2}|x|\ljmp w_h\rjmp,\ljmp q_h\rjmp\right>_{\W_x\times\mE_{y,h}^\mathrm{I}} \\
    &\quad+\left<xw_h,\bn q_h\right>_{\partial\W_y^+},
\end{split}
\\
\label{eqn:G_def}
    \mathcal{G}(q_h,t) &= \left<y g_x(\cdot,\cdot,t),\bn q_h\right>_{\partial\W_x^-} + \left<-x g_y(\cdot,\cdot,t),\bn q_h\right>_{\partial\W_y^-}.
\end{align}

\section{Conversion and Evaluation of Matrix Forms}\label{sect:mateval}

\subsection{Low-rank savings}\label{subsect:low_rank_savings}

To extract low-rank features of $u_h$, we convert the coefficients of $u_h$ to a matrix form.  Let $\{\phi_i(x)\}_{i=1}^m$ and $\{\psi_j(y)\}_{j=1}^n$ be a basis for $V_{x,h}$ and $V_{y,h}$, respectively,
that are orthonormal in $L^2(\W_x)$ and $L^2(\W_y)$, respectively. Then $u_h$ has an expansion
\begin{equation}\label{eqn:uh_coords}
    u_h(x,y,t) = \sum_{i=1}^m\sum_{j=1}^n u_h^{ij}(t)\phi_i(x)\psi_j(y)
\end{equation} 
where the coefficients $u_h^{ij}$ are given by
\begin{equation} \label{eqn:u_coeff_formula}
    u_h^{ij} = (u_h,\phi_i\psi_j)_\W.
\end{equation}
These coefficients may be assembled into the $m\times n$ matrix $U_h(t)$ where $(U_h)_{ij} = u_h^{ij}$, in which case 
the expansion for $u_h$ in \eqref{eqn:uh_coords} can be written as
\begin{align} \label{eqn:vec_to_mat}
u_h(x,y,t) =  \Phi(x)U_h(t)\Psi(y)^T
\end{align}
where $\Phi(x) = [\phi_1(x),\ldots,\phi_m(x)]$ and $\Psi(y) = [\psi_1(y),\ldots,\psi_n(y)]$.
For any function $q_h\in V_h$, we use the capitalized version $Q_h$ denote its matrix valued coordinate representation derived as in \eqref{eqn:vec_to_mat}. 

This paper is largely devoted the case when $U_h$ is a low-rank matrix.

\begin{defn}
A matrix $U\in\R^{m\times n}$ is said to be rank-$r$ if the column space
\begin{equation}\label{eqn:col_space_def}
\Col(U) = \{Uz:z\in\R^n\}.
\end{equation}
has dimension $r$.   The manifold of all such matrices is denoted $\mM_r$.
\end{defn}
The following is a standard result from linear algebra
\begin{prop}
If a matrix $U\in\R^{m\times n}$ is rank-$r$, then $r \leq \min\{m,n\}$.  Moreover, $U$ has a decomposition of the form $U=CSD^T$ where $C\in\R^{m\times r}$ and $D\in\R^{n\times r}$ are orthogonal matrices and $S\in\R^{r\times r}$ is non-singular. 
\end{prop}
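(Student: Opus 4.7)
The plan is to derive both claims from the singular value decomposition (SVD), treating the full SVD as a standard black-box fact from linear algebra.

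For the bound $r \leq \min\{m,n\}$, I would first note that $\Col(U)$ is a subspace of $\R^m$, so $r = \dim\Col(U) \leq m$ is immediate from the definition. The bound $r \leq n$ follows either from the equality of row rank and column rank (so $r = \dim\Col(U^T) \leq n$) or by viewing $U$ as a linear map $\R^n \to \R^m$ whose image is exactly $\Col(U)$ and applying rank-nullity. Either route is routine and purely a matter of quoting a known fact.

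For the decomposition, I would apply the full SVD to write $U = \tilde{C}\Sigma\tilde{D}^T$, where $\tilde{C}\in\R^{m\times m}$ and $\tilde{D}\in\R^{n\times n}$ are square orthogonal and $\Sigma\in\R^{m\times n}$ is diagonal with ordered entries $\sigma_1\geq \cdots \geq\sigma_{\min\{m,n\}}\geq 0$. Since left and right multiplication by orthogonal matrices preserves rank, exactly $r$ of these singular values are strictly positive. I would then partition $\tilde{C}=[C,\,C']$ and $\tilde{D}=[D,\,D']$, keeping the first $r$ columns of each to form $C\in\R^{m\times r}$ and $D\in\R^{n\times r}$, set $S=\text{diag}(\sigma_1,\ldots,\sigma_r)\in\R^{r\times r}$, and verify by block multiplication that $\tilde{C}\Sigma\tilde{D}^T = CSD^T$, since the trailing zero singular values annihilate the contributions of $C'$ and $D'$.

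The remaining properties fall out immediately: $C^TC = I_r$ and $D^TD = I_r$ because these matrices consist of columns of orthogonal matrices, and $S$ is diagonal with strictly positive entries, hence non-singular. The one semantic point worth flagging is that ``orthogonal'' in the statement must be read as ``having orthonormal columns'' rather than in the stricter square-orthogonal sense, since $C$ and $D$ are generally rectangular. I do not anticipate any serious obstacle; the proposition is essentially a restatement of the compact (thin) SVD in the paper's preferred notation, so the only care needed is in quoting the full SVD correctly and executing the block truncation cleanly.
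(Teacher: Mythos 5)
Your proof is correct. The paper states this proposition as a standard fact from linear algebra and gives no proof of its own, so there is nothing to compare against; your derivation via the full SVD, truncating to the $r$ nonzero singular values to obtain the compact factorization $U = CSD^T$, is the standard and entirely adequate justification, and your remark that ``orthogonal'' must be read as ``having orthonormal columns'' correctly identifies the intended (slightly abusive) usage in the statement.
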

If $U_h(t)$ is rank-$r$ with decomposition $U_h(t)=C(t)S(t)D(t)^T$, then according to \eqref{eqn:vec_to_mat},
\begin{equation}
    u_h(x,y,t) =  [C(t)\Phi(x)] S(t) [D(t)\Psi(y)]^T.
\end{equation}
The matrices $C$ and $D$ map the time-independent bases functions in $\Phi(x)$ and $\Psi(y)$ to the time-dependent \textit{low-rank basis functions} $\xi_{k}(x,t) := C(:,k)(t)\Phi(x)$ and $\upsilon_k(y,t) := D(:,k)(t)\Psi(y)$.  Thus if $X(x,t) = [\xi_1(x,t),\ldots,\xi_r(x,t)] = C(t)\Phi(x)$ and $Y(y,t) = [\upsilon_1(x,t),\ldots,\upsilon_r(x,t)] = D(t)\Psi(y)$, then
\begin{equation}
u_h(x,y,t) = \sum_{k,l=1}^r S_{kl}(t)\xi_k(x,t)\upsilon_l(y,t) = X(x,t)S(t)Y(y,t)^T.
\end{equation}
The matrix $S$ glues the low-rank basis functions together to reconstruct $u_h$, and for $r \ll {\min\{m,n\}}$, it is much cheaper to store the low-rank factors $C$, $D$, and $S$, which together require $mr+r^2+nr$ floating point values, than $U_h$ directly, which requires $mn$ floating point values.  

Dynamic low-rank approximation (DLRA) \cite{lubichkoch2007DLRA} methods update $X$, $S$, and $Y$ in tandem, but since $\Phi$ and $\Psi$ are independent of $t$, tracking the evolution of $X$ and $Y$ in time is equivalent to tracking  the evolution of $C$ and $D$.  While DLRA methods on paper can be applied to a generic matrix-valued ODE 
\begin{equation}
    \frac{\partial U}{\partial t} = F(U), \qquad U \in \mathbb{R}^{m \times n}, \quad F: \mathbb{R}^{m \times n} \to \mathbb{R}^{m \times n},
\end{equation}
the memory and computational savings provided by DLRA can be realized only if the operator $F$ respects the low-rank framework---that is, only if $F(CSD^T)$ can be evaluated without reconstructing the coefficients of $U_h$ by direct multiplication of the low-rank factors $C$, $S$, and $D$.  Bilinear forms that can separate their actions on the $x$ and $y$ domains tend to respect the low-rank structure.  This notion of separability is codified in \Cref{defn:sep}.

\begin{defn}\label{defn:sep} Let $\mB$ be a bilinear form on $V_h\times V_h$.  Given $N\in\N$ we say that $\mB$ is $N$-separable with respect to the basis $\{\phi_i\psi_j\}$ if for every $\kappa \in\N$ with $1\leq \kappa\leq N$ there exists bilinear forms $\mB_{\kappa}:V_h\times V_h\to\R$, $\mB_{\kappa,x}:V_{x,h}\times V_{x,h}\to\R$, $\mB_{\kappa,y}:V_{y,h}\times V_{y,h}\to\R$ such that
\begin{equation} \label{eqn:b_sum}
\mB(\phi_i\psi_j,\phi_k\psi_l) = \sum_{\kappa=1}^N\mB_\kappa(\phi_i\psi_j,\phi_k\psi_l) = \sum_{\kappa=1}^N\mB_{\kappa,x}(\phi_i,\phi_k)\mB_{\kappa,y}(\psi_j,\psi_l).
\end{equation}
We call $\mB_\kappa$ the terms of $\mB$, and call $\mB_{\kappa,x}$ and $\mB_{\kappa,y}$ the factors of $\mB_{\kappa}$.
\end{defn}

We can exploit separability to evaluate $N$-separable bilinear forms by the following proposition.
\begin{prop}\label{prop2.1}
Given $N\in\N$, let $\mB_\kappa$ be an $N$-separable bilinear form with expansion given in \Cref{defn:sep}.  Let $A_{\kappa} \in \mathbb{R}^{n \times n}$ and $B_{\kappa} \in \mathbb{R}^{m \times m}$ be matrices with elements
$(A_{\kappa})_{ik} = \mB_{\kappa,x}(\phi_k,\phi_i)$
and
$(B_{\kappa})_{jl} = \mB_{\kappa,y}(\psi_l,\psi_j)$ for all $1\leq\kappa\leq N$.
Then for any $w_h,q_h\in V_h$,
\begin{equation} \label{eqn:sep_sum}
\mB(w_h,q_h) = \sum_{\kappa=1}^N (A_{\kappa}W_hB_{\kappa}^T,Q_h)_F,
\end{equation}
where $W_h$ and $Q_h$ are the matrix coefficient representations of $w_h$ and $q_h$ respectively.
\end{prop}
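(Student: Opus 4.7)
The plan is to reduce the proposition to a direct index computation by expanding $w_h$ and $q_h$ in the tensor product basis $\{\phi_i\psi_j\}$, invoking the bilinearity of $\mB$, applying the separability hypothesis to each basis pair, and then recognizing the resulting quadruple sum as a Frobenius inner product of matrix triple products. There is no deep step: the only content is making sure that the indices on $A_\kappa$ and $B_\kappa$ (which are defined with their arguments in swapped order) produce the correct transposes so that the matrix multiplications $A_\kappa W_h B_\kappa^T$ come out right.

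First I would write $w_h = \sum_{i,k} (W_h)_{ik}\phi_i\psi_k$ and $q_h = \sum_{j,l} (Q_h)_{jl}\phi_j\psi_l$ from the representation \eqref{eqn:vec_to_mat}, and by bilinearity of $\mB$ obtain
\begin{equation*}
\mB(w_h,q_h) = \sum_{i,j,k,l}(W_h)_{ik}(Q_h)_{jl}\,\mB(\phi_i\psi_k,\phi_j\psi_l).
\end{equation*}
Next, applying \Cref{defn:sep} to each factor $\mB(\phi_i\psi_k,\phi_j\psi_l)$ gives
\begin{equation*}
\mB(w_h,q_h) = \sum_{\kappa=1}^{N}\sum_{i,j,k,l}(W_h)_{ik}(Q_h)_{jl}\,\mB_{\kappa,x}(\phi_i,\phi_j)\,\mB_{\kappa,y}(\psi_k,\psi_l).
\end{equation*}

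Then I would substitute the defining identities $(A_\kappa)_{ji} = \mB_{\kappa,x}(\phi_i,\phi_j)$ and $(B_\kappa)_{lk} = \mB_{\kappa,y}(\psi_k,\psi_l)$ (the index swap is exactly why the transpose on $B_\kappa$ appears in the statement) to rewrite the inner sum as
\begin{equation*}
\sum_{j,l}\Bigl(\sum_{i,k}(A_\kappa)_{ji}(W_h)_{ik}(B_\kappa^T)_{kl}\Bigr)(Q_h)_{jl} = \sum_{j,l}(A_\kappa W_h B_\kappa^T)_{jl}(Q_h)_{jl} = (A_\kappa W_h B_\kappa^T,Q_h)_F.
\end{equation*}
Summing over $\kappa$ yields \eqref{eqn:sep_sum}.

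The main potential pitfall, as flagged above, is the bookkeeping for the transpose convention on $A_\kappa$ and $B_\kappa$: because their entries are defined with arguments in reversed order relative to the basis functions of $W_h$ and $Q_h$, one must carefully distinguish $(A_\kappa)_{ij}$ from $(A_\kappa^T)_{ij}$ when collapsing the four-fold sum into the matrix product $A_\kappa W_h B_\kappa^T$. Apart from that, the argument is a routine manipulation that uses nothing beyond bilinearity, the separability expansion \eqref{eqn:b_sum}, and the definition of the Frobenius inner product $(M,Q)_F = \sum_{j,l} M_{jl}Q_{jl}$.
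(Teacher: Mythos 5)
Your proof is correct and follows essentially the same route as the paper's: expand both functions in the tensor-product basis, apply bilinearity and the separability expansion, identify the factor evaluations with the (transposed-index) entries of $A_\kappa$ and $B_\kappa$, and collapse the quadruple sum into $(A_\kappa W_h B_\kappa^T, Q_h)_F$ before summing over $\kappa$. The index bookkeeping you flag as the main pitfall is handled correctly.
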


\begin{proof}
Let $w_h=\sum_{i=1}^m\sum_{j=1}^n (W_h)_{ij}\phi_i\psi_j$ and $q_h=\sum_{k=1}^m\sum_{l=1}^n (Q_h)_{kl}\phi_k\psi_l$.  Then for any $1\leq \kappa\leq N$,
\begin{align} \label{eqn:prop2.1:1}
\begin{split}
\mB_\kappa(w_h,q_h) &= \sum_{ijkl} (W_h)_{ij}(Q_h)_{kl}\mB_{\kappa,x}(\phi_i,\phi_k)\mB_{\kappa,y}(\psi_j,\psi_l)
= \sum_{ijkl} (W_h)_{ij}(Q_h)_{kl}(A_{\kappa})_{ki}(B_{\kappa})_{lj} \\
&= \sum_{kl} (A_{\kappa}W_hB_{\kappa}^T)_{kl}(Q_h)_{kl} = (A_{\kappa}W_hB_{\kappa}^T,Q_h)_F.
\end{split}
\end{align}
Summing \eqref{eqn:prop2.1:1} over $\kappa$ yields \eqref{eqn:sep_sum}.  The proof is complete.  
\end{proof}

\begin{defn}\label{def:N-sep-lin}
A linear operator $\mL:\R^{m\times n}\to\R^{m\times n}$ is called $N$-separable provided it is of the form 
\begin{equation}\label{eqn:N-sep-form}
    \mL W = \sum_{\kappa=1}^N A_\kappa W B_{\kappa}^T \quad\forall W\in\R^{m\times n},
\end{equation}
where $A_\kappa\in\R^{m\times m}$ and $B_\kappa\in\R^{n\times n}$ for all $1\leq\kappa\leq N$.
\end{defn}

From \Cref{prop2.1}, \Cref{def:N-sep-lin}, and the Riesz Representation theorem, the existence of the linear operator $\mL:\R^{m\times n}\to\R^{m\times n}$ by
\begin{align}\label{eqn:mL_def}
    (\mL W_h,Q_h)_F = \mB(w_h,q_h) \quad\forall w_h,q_h\in V_h
\end{align}
is immediate.

\Cref{prop:matrix_evals} shows that the low-rank structure of $W_h$ allows for memory efficient evaluations of $\mL W_h$ and the action $Q\to (\mL W_h)$ -- both of which do not require assembly of $U_h$ by multiplication of the low-rank factors.

\begin{prop}\label{prop:matrix_evals}
Suppose $W_h$ has a rank-$r$ decomposition $W_h=CSD^T$ where $C\in\R^{m\times r}, S\in\R^{r\times r}$, and $D\in\R^{n\times r}$.  Furthermore for $1\leq\kappa\leq N$, suppose the matrices $A_{\kappa}$ and $B_{\kappa}$ in \eqref{eqn:N-sep-form} are sparse with $\mathcal{O}(m)$ and $\mathcal{O}(n)$ elements respectively.  Then $\mL W_h$ can be stored using $Nr(m+n+r)$ floating point values and computed in $\mathcal{O}(Nr(m+n))$ floating point operations (flops).
Moreover, suppose $Q$ is an $n\times q$ matrix. Set $\rho=\max\{r,q\}$.  Then the action $Q\to (\mL W_h)Q\in\R^{m\times q}$ requires $\mathcal{O}(Nr\rho(n+m))$ flops.  A similar operation count holds for the action $Q\to Q^T(\mL W_h)$ for $Q\in\R^{m\times q}$.  
\end{prop}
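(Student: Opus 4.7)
The plan is to exploit the low-rank factorization of $W_h$ to push the sparse matrices $A_\kappa,B_\kappa$ past the thin factors, and then to count flops term-by-term in the order that respects the matrix dimensions.

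First I would establish the central algebraic identity. Using $W_h = CSD^T$, for each $\kappa$ I have
\begin{equation*}
A_\kappa W_h B_\kappa^T = A_\kappa (CSD^T) B_\kappa^T = (A_\kappa C)\, S\, (B_\kappa D)^T =: \tilde C_\kappa \, S\, \tilde D_\kappa^T,
\end{equation*}
so $\mL W_h = \sum_{\kappa=1}^N \tilde C_\kappa S \tilde D_\kappa^T$ with $\tilde C_\kappa \in \R^{m\times r}$, $\tilde D_\kappa \in \R^{n\times r}$, and $S\in\R^{r\times r}$. This immediately gives the storage bound: each term contributes $mr + nr + r^2$ entries, summing to $Nr(m+n+r)$. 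For the construction cost, I would note that $A_\kappa$ has $\mO(m)$ nonzeros, so the sparse mat–mat product $A_\kappa C$ costs $\mO(mr)$, and similarly $B_\kappa D$ costs $\mO(nr)$; summing over $\kappa$ gives $\mO(Nr(m+n))$ flops, as claimed.

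Next I would handle the right action $Q \mapsto (\mL W_h)Q$ for $Q\in\R^{n\times q}$. The point is to evaluate each summand $\tilde C_\kappa S \tilde D_\kappa^T Q$ strictly right-to-left so that the intermediate matrices remain thin:
\begin{equation*}
T^{(1)}_\kappa = \tilde D_\kappa^T Q \;\; (r\times q), \qquad T^{(2)}_\kappa = S\, T^{(1)}_\kappa \;\; (r\times q), \qquad T^{(3)}_\kappa = \tilde C_\kappa\, T^{(2)}_\kappa \;\; (m\times q).
\end{equation*}
The costs are $\mO(nrq)$, $\mO(r^2 q)$, and $\mO(mrq)$ respectively, summing to $\mO((m+n)rq + r^2 q)$ per $\kappa$. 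Using $q\le\rho$ and the low-rank regime $r\le\min\{m,n\}$ to absorb $r^2 q \le r\rho\,\min\{m,n\} \le r\rho(m+n)$, each term is $\mO(r\rho(m+n))$, and summing the $N$ terms together with the final accumulation into the output (an $\mO(Nmq)$ cost that is dominated) yields the stated $\mO(Nr\rho(m+n))$ bound.

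The left action $Q \mapsto Q^T(\mL W_h)$ for $Q\in\R^{m\times q}$ is entirely symmetric: evaluate each summand strictly left-to-right as $(Q^T \tilde C_\kappa) S \tilde D_\kappa^T$, producing intermediates of size $(q\times r)$, $(q\times r)$, and $(q\times n)$, with respective costs $\mO(mqr)$, $\mO(qr^2)$, $\mO(qrn)$, and the same dominance argument closes the case. The main ``obstacle'' is really just the bookkeeping—making sure the multiplication order is always the one that keeps the thin dimension $r$ on the inside so that no $m\times n$ or $m\times m$ object is ever formed—and verifying that the $r^2 q$ term is absorbed under the implicit assumption $r\le\min\{m,n\}$, which is consistent with the low-rank setting of the paper.
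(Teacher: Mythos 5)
Your proof is correct and follows essentially the same route as the paper: factor each term as $(A_\kappa C)S(B_\kappa D)^T$, count storage per term, use sparsity for the $\mathcal{O}(Nr(m+n))$ construction cost, and evaluate the action by associating the products so every intermediate stays thin. The only cosmetic difference is the parenthesization of the triple product (you go strictly right-to-left, incurring an $r^2q$ term; the paper folds $S$ into the left factor, incurring an $mr^2$ term), and both are absorbed into the stated bound by the same dominance argument.
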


\begin{proof}
By \eqref{eqn:N-sep-form}, 
\begin{equation} \label{eqn:low_rank_eval}
     \mL W_h = \sum_{\kappa=1}^N A_{\kappa}CSD^TB_{\kappa}^T = \sum_{\kappa=1}^N (A_{\kappa}C)S(B_{\kappa}D)^T.
\end{equation}
Since $A_{\kappa}C\in\R^{m\times r}$ and $B_{\kappa}D\in\R^{n\times r}$ for all $\kappa$, we can store a copy of $A_{\kappa}C$, $S$, and $B_{\kappa}D$ which requires $r(m+n+r)$ floating point values for each $\kappa$ and $Nr(m+n+r)$ in total.  The sparsity of $A_{\kappa}$ and $B_{\kappa}$ imply that the operations $A_{\kappa}C$ and $B_{\kappa}D$ require $\mathcal{O}(mr)$ and $\mathcal{O}(nr)$ flops respectively.  Thus storage of $\mL W_h$ in this way requires $\mathcal{O}(Nr(m+n))$ flops.  To evaluate $(\mL W_h)Q$, we use \eqref{eqn:low_rank_eval}:
\begin{equation} \label{eqn:low_rank_eval_DLRA}
     (\mL W_h)Q = \sum_{\kappa=1}^N (A_{\kappa}CS)(D^TB_{\kappa}^TQ).
\end{equation}
The matrix $A_{\kappa}CS\in\R^{m\times r}$ and $D^TB_{\kappa}^TQ\in\R^{r\times q}$ require $\mathcal{O}(mr+mr^2)$ and $\mathcal{O}(nr+nrq)$ flops to evaluate while their product requires $\mathcal{O}(mrq))$ flops to multiply. Dropping the lower order $mr$ and $nr$ terms, the evaluation of $(A_{\kappa}CS)(D^TB_{\kappa}^TQ)\in\R^{m\times q}$ requires $\mathcal{O}{r(mr+mq+nq)}\leq \mathcal{O}{r\rho(m+n)}$ flops.  Therefore \eqref{eqn:low_rank_eval_DLRA} implies that the evaluation of $(\mL W_h)Q$ has a cost of $\mathcal{O}(Nr\rho(m+n))$. The proof is complete.
\end{proof}

\begin{remark}
\begin{enumerate}~
\item \eqref{eqn:low_rank_eval} is used in \cite{guo2022low} in order to store $\mL W_h$ in a low memory fashion as compared to the direct storage of the $m\times n$ matrix.  Memory savings of this storage technique rely on $Nr$ being small.  If either $N$ or $r$ becomes large, then storage by \eqref{eqn:low_rank_eval} becomes impractical. 
\item \eqref{eqn:low_rank_eval_DLRA} shows the memory footprint of $(\mL W_h)Q$ is not dependent on $N$.
\item DLRA methods heavily use \eqref{eqn:low_rank_eval_DLRA} with $Q$ being one of the basis matrices $C$ or $D$.  In addition, the predictor-corrector based DLRA method introduced in this paper also uses \eqref{eqn:low_rank_eval_DLRA} with the columns of $Q$ small.
\end{enumerate}
\end{remark}

\subsection{The low-rank structure of the PDE }\label{subsect:lr_features}

It turns out that the bilinear form $\mA$ defined in \eqref{eqn:bilinear} is a $N$-separable.  The proof of the following result is given in the Appendix.

\begin{prop}
\label{prop:A_low_rank}
Given any basis $\{\phi_i\psi_j\}$ on $V_h$, the bilinear form $\mA$ in \eqref{eqn:bilinear} is $N$-separable with $N=4$.  Moreover, the terms in the decomposition \eqref{eqn:b_sum} are given by
\begin{subequations}
\label{eqn:adv_decomp}
\begin{align}
    \mathcal{B}_{1,x}(\phi_i,\phi_j) &= -(\phi_i,\grad_x\phi_k)_{\W_x} + \left<\lavg\phi_i\ravg - \tfrac{1}{2}\ljmp\phi_i\rjmp,\ljmp\phi_k\rjmp\right>_{\mE_{x,h}^\mathrm{I}} + \left<\phi_in,\phi_k\right>_{\{x=-L\}} \label{eqn:adv_decomp_1x} \\
    \mathcal{B}_{1,y}(\psi_j,\psi_l) &= (-y\psi_j,\psi_l)_{\{y > 0\}} \label{eqn:adv_decomp_1y} \\
    \mathcal{B}_{2,x}(\phi_i,\phi_j) &= -(\phi_i,\grad_x\phi_k)_{\W_x} + \left<\lavg\phi_i\ravg + \tfrac{1}{2}\ljmp\phi_i\rjmp,\ljmp\phi_k\rjmp\right>_{\mE_{x,h}^\mathrm{I}} + \left<\phi_in,\phi_k\right>_{\{x=L\}} \label{eqn:adv_decomp_2x} \\
    \mathcal{B}_{2,y}(\psi_j,\psi_l) &= (-y\psi_j,\psi_l)_{\{y < 0\}} \label{eqn:adv_decomp_2y} \\
    \mathcal{B}_{3,x}(\phi_i,\phi_k) &= (x\phi_i,\psi_k)_{\{x > 0\}} \label{eqn:adv_decomp_3x} \\
    \mathcal{B}_{3,y}(\psi_j,\psi_l) &= -(\psi_j,\grad_y\psi_l)_{\W_y} +\left<\lavg \psi_j\ravg+\tfrac{1}{2}\ljmp \psi_j\rjmp,\ljmp \psi_l\rjmp\right>_{\mE_{y,h}^\mathrm{I}} + \left<\psi_jn,\psi_l\right>_{\{y = L\}} \label{eqn:adv_decomp_3y} \\
    \mathcal{B}_{4,x}(\phi_i,\phi_k) &= (x\phi_i,\psi_k)_{\{x < 0\}} \label{eqn:adv_decomp_4x} \\ 
    \mathcal{B}_{4,y}(\psi_j,\psi_l) &= -(\psi_j,\grad_y\psi_l)_{\W_y} +\left<\lavg \psi_j\ravg-\tfrac{1}{2}\ljmp \psi_j\rjmp,\ljmp \psi_l\rjmp\right>_{\mE_{y,h}^\mathrm{I}} + \left<\psi_jn,\psi_l\right>_{\{y = -L\}} \label{eqn:adv_decomp_4y} 
\end{align}
\end{subequations}
\end{prop}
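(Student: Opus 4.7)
The natural approach is to split $\mA$ into two summands according to which spatial derivative appears in the test function: an $x$-advective part involving $-y$ and $\grad_x q_h$, and a $y$-advective part involving $x$ and $\grad_y q_h$. The two parts are structurally identical (up to swapping $x\leftrightarrow y$), so it suffices to derive two separable terms from each and then sum. The main idea for each half is to remove the absolute value $|y|$ (respectively $|x|$) by splitting the corresponding one-dimensional integral over the sign of the coefficient, and to split the outflow boundary $\partial\W_x^+$ (respectively $\partial\W_y^+$) into its two physical pieces $\{x=-L,\, y>0\}$ and $\{x=L,\, y<0\}$ (respectively $\{y=L,\, x>0\}$ and $\{y=-L,\, x<0\}$). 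This aligns exactly with the sign regions that the upwind flux depends on.

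Concretely, I would first expand $w_h,q_h$ in the tensor basis so that every term of $\mA(\phi_i\psi_j,\phi_k\psi_l)$ factors into a product of an $x$-only integral and a $y$-only integral. For the $x$-advective part, the volume term contributes $-(\phi_i,\grad_x\phi_k)_{\W_x}\,(-y\psi_j,\psi_l)_{\W_y}$, which I would split as $(-y\psi_j,\psi_l)_{\{y>0\}}+(-y\psi_j,\psi_l)_{\{y<0\}}$. The interior-edge term, using $|y|=y$ on $\{y>0\}$ and $|y|=-y$ on $\{y<0\}$, rearranges to
\begin{equation*}
 \bigl\langle \lavg\phi_i\ravg - \tfrac12\ljmp\phi_i\rjmp,\ljmp\phi_k\rjmp\bigr\rangle_{\mE_{x,h}^{\mathrm I}}\,(-y\psi_j,\psi_l)_{\{y>0\}}
 +\bigl\langle \lavg\phi_i\ravg + \tfrac12\ljmp\phi_i\rjmp,\ljmp\phi_k\rjmp\bigr\rangle_{\mE_{x,h}^{\mathrm I}}\,(-y\psi_j,\psi_l)_{\{y<0\}}.
\end{equation*}
The boundary term $\langle -yw_h,\bn q_h\rangle_{\partial\W_x^+}$ splits, by the characterization of the outflow set, into a $\{x=-L,y>0\}$ piece and a $\{x=L,y<0\}$ piece; each piece factors as a boundary integral in $x$ times $(-y\psi_j,\psi_l)$ over the corresponding half-line in $y$. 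Collecting coefficients of $(-y\psi_j,\psi_l)_{\{y>0\}}$ gives $\mathcal B_{1,x}\cdot\mathcal B_{1,y}$, and of $(-y\psi_j,\psi_l)_{\{y<0\}}$ gives $\mathcal B_{2,x}\cdot\mathcal B_{2,y}$.

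The identical treatment of the $y$-advective part, with the roles of $x$ and $y$ exchanged and $-y$ replaced by $x$, produces $\mathcal B_{3,x}\cdot\mathcal B_{3,y}+\mathcal B_{4,x}\cdot\mathcal B_{4,y}$, completing the $N=4$ separable decomposition.

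The only subtle step is the bookkeeping around the absolute-value splits and the outflow boundaries: one must verify that on $\{y>0\}$ the upwind direction (encoded by $\lavg\cdot\ravg-\tfrac12\ljmp\cdot\rjmp$) and outflow end ($x=-L$) are consistently paired, and likewise for $\{y<0\}$ with $x=L$. This is a careful sign-tracking exercise rather than a conceptual obstacle; no estimates or regularity arguments are needed, only the identities $|y|=\mathrm{sgn}(y)\,y$ and the explicit description of $\partial\W_x^\pm$ and $\partial\W_y^\pm$ in \eqref{eqn:inflow_y_def}.
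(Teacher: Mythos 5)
Your proposal is correct and follows essentially the same route as the paper: split each advective half over the sign of the transport coefficient to resolve the absolute value in the upwind flux, decompose the outflow boundary into its two sign-consistent pieces, and collect the resulting tensor-product factors (the paper writes out the $y$-advective terms $I_1,I_2$ in detail and invokes symmetry for the rest, whereas you detail the $x$-advective half). Your sign bookkeeping — pairing $\lavg\cdot\ravg-\tfrac12\ljmp\cdot\rjmp$ with $\{y>0\}$ and the outflow end $x=-L$, and the analogous pairing for $\{y<0\}$ — matches the paper's computation.
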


\begin{remark}
    If central fluxes and periodic boundary conditions were used in the DG discretization, then only two terms would be needed: one for $-y\cdot\grad_x u$ and one for $x\cdot\grad_y u$.  However, more terms are needed to handle the upwind numerical flux and the outflow boundary condition. 
\end{remark}

\begin{remark}
    $\mB_{1,x}$ and $\mB_{2,x}$ are a downwind and upwind flux DG discretizations of the operator $\partial_x$ respectively.  While $\mB_{1,x}$ is a negative semi-definite form due to the downwind numerical flux, $\mB_{1,y}$ is also negative semi-definite.  Therefore their product is a positive semi-definite form on $V_h\times V_h$. Indeed, using the notation in \eqref{eqn:b_sum}, $\mB_{\kappa}$ is positive semi-definite for $\kappa=1,2,3,4$. 
\end{remark}

To handle the other terms in \eqref{eqn:num_prob}, let $s_h\in V_h$ be the $L^2$ projection of the source $s$ in \eqref{eqn:pde} onto $V_h$, defined by the relation
\begin{equation}
    (s_h,q_h)_\W = (s,q_h)\quad\forall q_h\in V_h,
\end{equation}
and let $S_h$ be the matrix representation of $s_h$ with coefficients derived using \eqref{eqn:u_coeff_formula}.  We assume the operations $Q\to S_hQ$ and $Q^T\to Q^TS_h$ are both efficiently computed.  This can be done if the rank of $S_h$ is small.  Additionally, let $g_h(t)\in V_h$ represent the action of $\mathcal{G}$; that is, 
\begin{equation} \label{eqn:g_h_def}
(g_h(t),q_h)_\W = \mathcal{G}(q_h,t) \quad\forall q_h\in V_h. 
\end{equation}
The existence and uniqueness of $g_h$ is given by the Riesz representation theorem.  Let $G_h$ be the matrix representation of $g_h$ with coefficients derived using \eqref{eqn:u_coeff_formula}.  We can show that the rank of $G_h$ is at most four by the following proposition, whose proof is in the Appendix.

\begin{prop} \label{prop:G_low_rank}
$G_h(t)=C(t)S(t)D(t)^T$ where $C(t)\in\R^{m\times 4}, S(t)\in\R^{r\times r}$ and $D(t)\in\R^{n\times 4}$ for any $t\geq 0$.  Additionally, the rank of $G_h$ is at most 4.  
\end{prop}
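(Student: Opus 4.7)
The strategy is to show that the boundary functional $\mathcal{G}(\cdot, t)$ defined in (\ref{eqn:G_def}) decomposes into exactly four separable contributions, each producing a rank-one summand of $G_h(t)$, whence $G_h(t)$ admits the claimed factorization with rank at most $4$.

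First, I would decompose each inflow boundary into segments on which one coordinate is constant. Since $\partial \W_z = \{-L, L\}$ with outward normals $n(-L) = -1$ and $n(L) = 1$, the definition (\ref{eqn:inflow_y_def}) gives $\partial \W_x^- = \bigl(\{-L\}\times\{y>0\}\bigr) \cup \bigl(\{L\}\times\{y<0\}\bigr)$, and analogously $\partial \W_y^- = \bigl(\{x<0\}\times\{-L\}\bigr) \cup \bigl(\{x>0\}\times\{L\}\bigr)$. Thus $\mathcal{G}(q_h, t)$ becomes a sum of four scalar integrals, each over a segment on which either $x$ or $y$ is fixed.

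Second, on each segment I would substitute the matrix representation $q_h = \Phi(x) Q_h \Psi(y)^T$; because one coordinate is held constant, the corresponding factor $\Phi(\pm L)$ or $\Psi(\pm L)$ pulls out of the integral. For example, the segment $\{-L\}\times\{y>0\}$ contributes
\begin{equation*}
    -\!\!\int_{\{y>0\}}\! y\, g_x(-L,y,t)\, \Phi(-L)\,Q_h\, \Psi(y)^T\, dy \;=\; \Phi(-L)\,Q_h\, w_1(t) \;=\; \bigl(v_1 w_1(t)^T,\, Q_h\bigr)_F,
\end{equation*}
where $v_1 := \Phi(-L)^T \in \R^m$ and $(w_1(t))_j := -\int_{\{y>0\}} y\, g_x(-L,y,t)\,\psi_j(y)\,dy$. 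Repeating this factorization on the remaining three segments yields vectors $v_k\in\R^m$ (time-independent; either $\Phi(\pm L)^T$ or moments of $x$) and $w_k(t)\in\R^n$ (time-dependent; either $\Psi(\pm L)^T$ or moments of $g_x, g_y$) for $k=1,\dots,4$, with signs dictated by the outward normals.

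Finally, the defining relation (\ref{eqn:g_h_def}) combined with the $L^2$-orthonormality of $\{\phi_i\}$ and $\{\psi_j\}$ gives $(g_h, q_h)_\W = (G_h, Q_h)_F$, so summing the four factored contributions produces
\begin{equation*}
    (G_h(t), Q_h)_F \;=\; \sum_{k=1}^4 \bigl(v_k w_k(t)^T,\, Q_h\bigr)_F \qquad \forall\, Q_h \in \R^{m\times n}.
\end{equation*}
Hence $G_h(t) = \sum_{k=1}^4 v_k w_k(t)^T = C\, I_4\, D(t)^T$ where $C = [v_1\,|\,v_2\,|\,v_3\,|\,v_4]\in\R^{m\times 4}$, $D(t) = [w_1(t)\,|\,w_2(t)\,|\,w_3(t)\,|\,w_4(t)]\in\R^{n\times 4}$, and $S=I_4$; the rank bound $\rank(G_h(t))\le 4$ is immediate. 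The main obstacle is purely bookkeeping: carefully tracking outward normals, inflow conditions $\pm y$ and $\mp x$, and the four coordinate-constant segments to ensure correct signs; once the boundaries are split as above, the factorization is forced by the product structure $\{\text{fixed }x\}\times I_y$ or $I_x\times\{\text{fixed }y\}$ of each piece.
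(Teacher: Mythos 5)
Your proposal is correct and follows essentially the same route as the paper: split the two inflow-boundary integrals of $\mathcal{G}$ into four segments on which one coordinate is constant, factor each into a rank-one outer product $v_k w_k(t)^T$, and assemble $G_h(t)=C\,S\,D(t)^T$ with $S=I_4$, whence $\rank G_h\le 4$. The only slip is in identifying the segments: since the $x$-advection velocity is $-y$, the inflow set is $\partial\W_x^-=(\{x=L\}\times\{y>0\})\cup(\{x=-L\}\times\{y<0\})$, and analogously $\partial\W_y^-=(\{x>0\}\times\{y=-L\})\cup(\{x<0\}\times\{y=L\})$ — the opposite pairing from yours — but this bookkeeping swap does not change the count of four rank-one terms or the conclusion.
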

In matrix form, \eqref{eqn:num_prob} becomes
\begin{align}\label{eqn:num_ode_form}
    \frac{\partial U_h}{\partial t} = F(U_h,t) :=  \mL U_h + G_h(t) + S_h(t).
\end{align}

\section{Dynamic Low-Rank Approximation} \label{sect:DLRA}

In this section we give a brief introduction to Dynamic Low-Rank Approximation (DLRA) and several associated temporal integrators.  As the content in this section does not depend on the DG discretization, we consider the general ODE
\begin{align}\label{eqn:non_h_ode}
\begin{split}
\frac{\partial U}{\partial t}(t) &= F(U(t),t),\quad t>0; \\
U(0) &= U_0.  
\end{split}
\end{align}
Abusing notation, we will suppress the $t$ argument of $F$ and write $F(U,t) = F(U)$.

\subsection{A Short Introduction}

DLRA was developed in \cite{lubichkoch2007DLRA} as efficient way to produce a low-rank approximation to $U(t)$ that doesn't require compression at each step of a time integration scheme. This is achieved by evolving the continuous ODE on the manifold $\mM_r$.  Indeed if $U_\text{DLRA}\in \mM_r$ is a rank-$r$ approximation at time $t$, then one can guarantee $U_\text{DLRA}\in \mM_r$ for all future time by forcing its time derivative to live in $T_{U_\text{DLRA}}\mM_r$ --- the tangent space of $\mM_r$ at $U_\text{DLRA}$.  The rank-$r$ solution $\UDLRA$ is defined by the following evolution equation:
\begin{align}\label{eqn:DLRA_def}
    \frac{\partial U_\text{DLRA}}{\partial t} = \argmin_{Z\in T_{\UDLRA}\mM_r}\|F(U_\text{DLRA})-Z\|_F,
\end{align}
which is equivalent to the following Galerkin condition:
\begin{align}\label{eqn:DLRA_galerkin}
    \left(\frac{\partial U_\text{DLRA}}{\partial t},Z\right)_F =  (F(U_\text{DLRA}),Z)_F \quad\forall Z\in T_{\UDLRA}\mM_r;
\end{align}
which is again equivalent to
\begin{align}\label{eqn:DLRA_proj}
    \frac{\partial \UDLRA}{\partial t} = P_{T_{\UDLRA}\mM_r} F(\UDLRA),
\end{align}
where $P_{T_{\UDLRA}\mM_r}$ is the orthogonal projection onto ${T_{\UDLRA}\mM_r}$.  

Any implementation of \eqref{eqn:DLRA_def}-\eqref{eqn:DLRA_proj} should leverage the low-rank structure of $\UDLRA$ in order to be memory efficient.  If $\UDLRA = C(t)S(t)D(t)^T$, then $T_{U_\text{DLRA}}\mM_r$ is given by \cite{lubichkoch2007DLRA}:
\begin{align}\label{eqn:def_tangent_space}
\begin{split}
    T_{U_\text{DLRA}}\mM_r &= \{\delta CSD^T + C\delta S D^T + CS\delta D^T: \delta C\in\R^{m\times r} \text{ with }\delta C^TC = 0, \\
    &\qquad\delta S\in\R^{r\times r}, \delta D\in\R^{n\times r} \text{ with }\delta D^TD = 0\},
\end{split}
\end{align}
where the gauge conditions $\delta C^TC = 0$ and $\delta D^TD = 0$ guarantee a unique representation.  Moreover, $C,S,D$ satisfy the ODE system \cite{lubichkoch2007DLRA}:
\begin{subequations}\label{eqn:eqn_of_motion}
\begin{align}
C'(t) &= (I-CC^T)F(CSD)DS^{-1}, \label{eqn:eqn_of_motion_C} \\
S'(t) &= C^TF(CSD)D, \label{eqn:eqn_of_motion_S} \\
D'(t) &= (I-DD^T)F(CSD)^TC(S^{-1})^T. \label{eqn:eqn_of_motion_D}
\end{align}
\end{subequations}
It was shown in \cite{lubichkoch2007DLRA} that $\|\UDLRA-U\|_F$ where $U$ solves \eqref{eqn:non_h_ode} is of $\mathcal{O}(\e)$ for small time where $\e$ is the distance from $U$ to $\mM_r$.  Therefore $\UDLRA$ is a quasi-optimal rank-$r$ approximation of $U$.  Additionally, in the formulation of \eqref{eqn:eqn_of_motion}, $F(\UDLRA)$ is evaluated only via products of the form $F(\UDLRA)D$ or $C^TF(\UDLRA)$.  If these products can be evaluated efficiently, like we have shown in \Cref{sect:mateval}, then the solution $\UDLRA$ need not be explicitly constructed; and in such cases, one may expect computational and memory savings over the full-rank system.

While \eqref{eqn:eqn_of_motion} has a lot of promising features, it is often numerically unstable to advance in time.  This is because if the effective rank of $\UDLRA$ is less than the prescribed $r$, then numerically $S$ will contain singular values near machine epsilon.  Thus timestepping \eqref{eqn:eqn_of_motion} will be unstable unless $\Delta t$ is taken to be impracticably small (see \cite{lubich2014projector}).  Because of this, other integrators have been developed to bypass this obstacle while keeping the main advantage of \eqref{eqn:eqn_of_motion} -- the evolution of the low-rank factors.  The remainder of \Cref{sect:DLRA} is devoted to such integrators.

\subsection{The Unconventional Integrator} \label{subsec:uc}

The \textit{unconventional integrator} for \eqref{eqn:eqn_of_motion} was introduced in \cite{ceruti2021unconventional}.  For completeness, we list a motivation and derivation of the algorithm here. 

The goal is to create an integrator that does not invert $S$.  This is achieved by first isolating the combined variable $K=CS$.  Right multiplying \eqref{eqn:eqn_of_motion_C} by $S$, left multiplying \eqref{eqn:eqn_of_motion_S} by $C$, then summing yields a modified system of \eqref{eqn:eqn_of_motion}:
\begin{subequations}\label{eqn:UC_1}
\begin{align}
C'(t)S + CS'(t) &= F(CSD)D, \label{eqn:UC_1K} \\
D'(t) &= (I-DD^T)F(CSD)^TC(S^{-1})^T. \label{eqn:UC_1D}
\end{align}
\end{subequations}
The right hand side of \eqref{eqn:UC_1K} is equal $K'(t)$.  Additionally, the left hand side of \eqref{eqn:UC_1K} can be written in terms of $K$ rather than $C$.  Applying such modifications gives
\begin{subequations}\label{eqn:UC_2}
\begin{align}
K'(t) &= F(KD)D, \label{eqn:UC_2K} \\
D'(t) &= (I-DD^T)F(CSD)^TC(S^{-1})^T. \label{eqn:UC_2D}
\end{align}
\end{subequations}
However, \eqref{eqn:UC_2D} is explicitly dependent on $C$, not $K$.  Given a $\Delta t>0$, we remove the $C$ coupling in \eqref{eqn:UC_2D} by instead solving the approximate system
\begin{subequations} \label{eqn:UC_K}
\begin{align} \label{eqn:UC_K1}
    K'(t) &= F(KD^T)D, \\
    D'(t) &= 0. \label{eqn:UC_K2}
\end{align}
\end{subequations}
on $[t_0,t_0+\Delta t]$.  Since $D$ remains constant in time we can replace $D$  with $D=D_0$ where $C_0S_0D_0$ is the approximation to $U$ at $t$.  Additionally we set the initial condition for $K(t_0)=C_0S_0$.  While we cannot recover $C$ using \eqref{eqn:UC_K}, the purpose of $C$ is to track the evolution of the basis functions.  Therefore, any orthogonal matrix that spans the same column space of $K_1 := K(t_0+\Delta t)$ is sufficient.  Such an updated matrix, denoted as $C_1$, can be obtained by a QR or SVD decomposition of $K_1$.

A similar system holds for evolving the $D$ basis.  Setting $L=DS^T$, then we solve the approximate ODE system
\begin{subequations} \label{eqn:UC_L}
\begin{align} \label{eqn:UC_L1}
    L'(t) &= F(CL^T)^TC, \\
    C'(t) &= 0.  \label{eqn:UC_L2}
\end{align}
\end{subequations}
on $[t_0,t_0+\Delta t_0]$.  Again $C=C_0$, $L(t_0) = D_0S_0^T$, and $D_1$ is an orthogonal matrix with equal column space of $L(t_0+\Delta t)$.  
Once the new bases $C_1$ and $D_1$ are known. $S$ is updated in the new space spanned by  $C_1$ and $D_1$ through a Galerkin projection of \eqref{eqn:non_h_ode}.  It was shown in \cite{ceruti2021unconventional} that the global timestepping error of the unconventional integrator is $\mathcal{O}(\e+\Delta t)$ where $\e$ is the distance from $\mM_r$ to the solution $U$ of \eqref{eqn:non_h_ode}. 

\Cref{alg:uc} details a forward Euler timestepping method applied to the unconventional integrator.  Due to the line 5 of the algorithm, $C_0$ and $C_1$ must be kept in memory at the same time.  Thus the memory requirement for  \Cref{alg:uc} is near double the storage cost of storing $U$ via a low-rank factorization.

\begin{algorithm}[H]
\DontPrintSemicolon
\SetKwInOut{Output}{Output~}
\SetKwInOut{Input}{Input~}
\caption{Forward Euler Timestepping with Unconventional Integrator}\label{alg:uc}
\Input{$C_0\in\R^{m\times r},S_0\in\R^{r\times r},D_0\in\R^{n\times r}$ \tcp*{$U^n=C_0S_0D_0^T$}}
\Output{$C_1\in\R^{m\times r},S_1\in\R^{r\times r},D_1\in\R^{n\times r}$ \tcp*{ $U^{n+1}=C_1S_1D_1^T$}}
\BlankLine
$K_1 = C_0S_0 + \Delta t F(C_0S_0D_0^T)D_0$\;
$[C_1, \sim] = \textsf{qr}(K_1)$\;
$L_1 = D_0S_0^T + \Delta t F(C_0S_0D_0^T)^TC_0$\;
$[D_1, \sim] = \textsf{qr}(L_1)$\;
$\tilde{S} = (C_1^TC_0)S_0(D_0^TD_1)$\;
$S_1 = \tilde{S} + \Delta t C_1^TF(C_1\tilde{S}D_1^T)D_1$
\end{algorithm}

\subsection{Tangent Projection DLRA} \label{subsec:tan}

Another integrator comes from a direct temporal discretization of the low-rank ODE \eqref{eqn:DLRA_proj}.  
Given the decomposition $U=CSD^T$, an explicit representation of the projection operator $P_{T_U}:=P_{T_U\mM_r}$ is given by \cite{lubich2014projector}:
\begin{align}\label{eqn:proj}
\begin{split}
    P_{T_U}Z &= CC^TZ - CC^TZDD^T + ZDD^T.
\end{split}
\end{align}
Due to \eqref{eqn:proj}, we write the forward Euler update of $U^n=C_0S_0D_0^T$ as
\begin{align}\label{eqn:tan_fe}
\begin{split}
    U^{n+1} &= U^n + \Delta tP_{T_{U^n}}F(U^n) \\
    &= U^n + \Delta t(C_0C_0^TF(U^n) - C_0C_0^TF(U^n)D_0D_0^T + F(U^n)D_0D_0^T)
\end{split}
\end{align}
Since $U^n\in T_{U^n}\mM_r$, then $U^{n+1}$ is the projection of the full-rank forward Euler update onto the tangent space.  Rewriting $U^{n+1}$ in a low-rank fashion yields \Cref{alg:tan}.  The algorithm shows the construction of $U^{n+1}$ requires only two evaluations of $F$ -- unlike the three required for the unconventional integrator.  However, one downside comes from lines 3 and 5 that show $U^{n+1}$ must be stored in a rank $2r$ factorization.  Thus the rank will exponentially grow without some culling mechanism (see \Cref{def:cull}).  In \cite{kieri2019projection}, the authors discuss such culling methods as well as extensions to higher order timestepping schemes.
\begin{defn}\label{def:cull}
Suppose $U\in\R^{m\times n}$ is a rank-$r$ matrix $U$ with SVD decomposition $U=CSD^T$, where $C\in\R^{m\times r}$, $S\in\R^{r\times r}$, and $D\in\R^{n\times r}$.  Given $r_1< r$, the cull $U$ to rank $r_1$ to replace $U$ by $\tilde{U}$ where
\begin{equation}
    \tilde{U} = \sum_{i=1}^{r_1}\sigma_i(U)c_id_i^T = \tilde{C}\tilde{S}\tilde{D}^T.   
\end{equation}
Here $\tilde{U}$ is rank $r_1$, and  $C\in\R^{m\times r_1}$, $S\in\R^{r\times r_2}$, and $D\in\R^{n\times r_1}$ are defined by
\begin{equation}
    \tilde{C} = C(:,1\!:\!r_1),\ \tilde{S} = S(1\!:\!r_1,1\!:\!r_1),\ \tilde{D} = D(:,1\!:\!r_1).
\end{equation}
\end{defn}

\begin{algorithm}[H] 
\DontPrintSemicolon
\SetKwInOut{Output}{Output~}
\SetKwInOut{Input}{Input~}
\caption{Forward Euler Timestepping with Tangent Integrator}\label{alg:tan}
\Input{$C_0\in\R^{m\times r},S_0\in\R^{r\times r},D_0\in\R^{n\times r}$ \tcp*{$U^n=C_0S_0D_0^T$}}
\Output{$C_1\in\R^{m\times 2r},S_1\in\R^{2r\times 2r},D_1\in\R^{n\times 2r}$ \tcp*{ $U^{n+1}=C_1S_1D_1^T$}}
\BlankLine
$K_1 = F(C_0S_0D_0^T)D_0$\;
$\tilde{S} = C_0^TK_1$ \;
$[C_1,R_C] = \textsf{qr}(\begin{bmatrix}C_0 & K_1\end{bmatrix})$\;
$L_1 = F(C_0S_0D_0^T)^TC_0$\;
$[D_1,R_D] = \textsf{qr}(\begin{bmatrix}D_0 & L_1\end{bmatrix})$\;
$S_1 = 
R_C
\begin{bmatrix}
S_0-\Delta t\tilde{S} & \Delta t I \\
\Delta t I & 0
\end{bmatrix}
R_D^T$
\end{algorithm}

\subsection{Projected Unconventional Integrator} \label{subsec:proj}

Finally, we present a perturbation of the unconventional integrator from \Cref{subsec:uc} that is beneficial to our predictor-corrector based discussed in \Cref{subsect:rara}.  This integrator evolves the basis functions $C$ and $D$ in the exact same way as the unconventional integrator.  Given the evolved basis $C_1$, $D_1$ from lines 3 and 5 of \Cref{alg:uc}, let $H$ be the subspace of $\R^{m\times n}$ defined by
\begin{equation}
H = \{C_1WD_1^T:W\in\R^{r\times r}\}
\end{equation}
The update of the coefficient matrix $S$ in the unconventional integrator (lines 6 and 7 of \Cref{alg:uc}) can be written using the projector $P_H$ as 
\begin{equation}
 U^{n+1} = P_H(U^n + \Delta tF(P_HU^n))
\end{equation}
where
\begin{align}\label{eqn:H_proj}
    P_HZ = C_1C_1^TZD_1D_1^T.
\end{align}
Again, this is applying a Galerkin projection to the full-rank ODE \eqref{eqn:non_h_ode} and then discretizing in time using forward Euler. However, we can reverse the order of these operations.  We instead project the full-rank forward Euler update onto $H$, that is,
\begin{align}\label{eqn:proj_update}
    U^{n+1} = P_H(U^n + \Delta tF(U^n)).
\end{align}
The projected unconventional integrator is given for a forward Euler timestep in \Cref{alg:proj}.  One advantage of the projected unconventional integrator is that the error between the full-rank foward Euler update $\UFE$ and the low-rank approximation is is the the orthogonal complement of $P_H\UFE$.  This property has a benefit when our rank-adaptive algorithm in \Cref{subsect:rara} is applied to the projected unconventional integrator. 

\begin{algorithm}[H] 
\DontPrintSemicolon
\caption{Forward Euler Timestepping with Projected Unconventional Integrator}\label{alg:proj}
\SetKwInOut{Output}{Output~}
\SetKwInOut{Input}{Input~}
\Input{$C_0\in\R^{m\times r},S_0\in\R^{r\times r},D_0\in\R^{n\times r}$ \tcp*{$U^n=C_0S_0D_0^T$}}
\Output{$C_1\in\R^{m\times r},S_1\in\R^{r\times r},D_1\in\R^{n\times r}$ \tcp*{ $U^{n+1}=C_1S_1D_1^T$}}
\BlankLine
$K_1 = C_0S_0 + \Delta t F(C_0S_0D_0^T)D_0$\;
$[C_1,\sim] = \textsf{qr}(K_1)$\;
$L_1 = D_0S_0^T + \Delta t F(C_0S_0D_0^T)^TC_0$\;
$[D_1,\sim] = \textsf{qr}(L_1)$\;
$S_1 = (C_1^TC_0)S_0(D_0^TD_1) + \Delta t C_1^TF(C_0S_0D_0^T)D_1$
\end{algorithm}

We now show that this method is first order accurate in the following theorem which borrows heavily from Theorem 4 of \cite{ceruti2021unconventional}. 

\begin{theorem}\label{thm:proj_error}
Suppose $U(t)$ is the solution to \eqref{eqn:non_h_ode} with initial condition $U_0$ on the interval $[0,T]$ for some final time $T>0$.  Suppose following assumptions are held:
\begin{enumerate}
    \item $F$ is Lipschitz continuous in $Y$, uniformly in $t$, and  bounded uniformly in $(Y,t)$; that is, there exist $L,B>0$ such that 
    \begin{equation}
    \|F(Y,t)-F(Z,t)\|_F \leq L\|Y-Z\|_F\text{ and }\|F(Y,t)\|_F\leq B
    \end{equation}
    for all $Y,Z\in\R^{m\times n}$ and $0\leq t\leq T$
    \item There is an $\e>0$ such that
    \begin{equation}
        \|F(Y,t) - P_{T_{Y}}F(Y,t)\|_F \leq \e
    \end{equation} 
    for all $Y\in\mM_r$ in a neighbourhood of $U(t)$ and all $0\leq t\leq T$.
    \item There is a $\delta > 0$ such that
    \begin{equation}
        \|U^0-U(t^0)\|_F \leq \delta
    \end{equation}
\end{enumerate}
Suppose $\Delta t$ is sufficiently small such that the the standard forward Euler timestepping iteration is stable w.r.t $\|\cdot\|_F$, and furthermore suppose $U$ is $C^2$ on $(0,T)$.  Then setting $t_n = n\Delta t$ and letting $U^n$ be the result of $n$ steps of the projected unconventional integrator, we have
\begin{align}
\label{eqn:proj_global_err}
    \|U^n-U(t^n)\|_F \leq c_0\delta + c_1\e + c_2\Delta t
\end{align}
provided $t_n\leq T$.  The constants $c_i$ depend only on $L$, $B$, and $T$.
\end{theorem}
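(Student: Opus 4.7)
My plan is to follow the structure of Theorem~4 in \cite{ceruti2021unconventional} for the original unconventional integrator, with modifications to handle the different $S$-update in \Cref{alg:proj}. Specifically I will (i) derive a local truncation error bound $\|U^{n+1} - (U^n + \Delta t F(U^n))\|_F \leq c(\e + \Delta t)\Delta t$ that quantifies how far the low-rank scheme deviates from a full-rank forward Euler step; (ii) use Lipschitz continuity of $F$ and the $C^2$ regularity of $U$ to convert this into a one-step recursion for $\|U^n - U(t^n)\|_F$; and (iii) apply a discrete Gr\"onwall inequality with the initial bound $\delta$ to arrive at \eqref{eqn:proj_global_err}.

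The heart of the proof is the local error bound. Because the scheme produces $U^{n+1} = P_H(U^n + \Delta t F(U^n))$ with $P_H Z = C_1 C_1^T Z D_1 D_1^T$, the one-step discrepancy from a full-rank forward Euler step is exactly $-(I - P_H)(U^n + \Delta t F(U^n))$. The key observation, built into \Cref{alg:proj}, is that $(U^n + \Delta t F(U^n)) D_0 = K_1 \in \mathrm{range}(C_1)$ and $C_0^T(U^n + \Delta t F(U^n)) = L_1^T$ with $L_1 \in \mathrm{range}(D_1)$. Combining these identities with the splitting $I - P_H = (I - C_1 C_1^T) + C_1 C_1^T(I - D_1 D_1^T)$ and with $U^n = C_0 S_0 D_0^T$ will collapse the defect to
\[
(I - P_H)(U^n + \Delta t F(U^n)) = \Delta t\bigl[(I - C_1 C_1^T) F(U^n)(I - D_0 D_0^T) + C_1 C_1^T(I - C_0 C_0^T) F(U^n)(I - D_1 D_1^T)\bigr],
\]
which already exposes the desired $\Delta t$ prefactor.

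To close the local bound I will use the identity $F(U^n) - P_{T_{U^n}} F(U^n) = (I - C_0 C_0^T) F(U^n)(I - D_0 D_0^T)$, so that Assumption~2 directly gives $\|(I - C_0 C_0^T) F(U^n)(I - D_0 D_0^T)\|_F \leq \e$. The main technical obstacle, inherited from \cite{ceruti2021unconventional}, is that after inserting $I = C_0 C_0^T + (I - C_0 C_0^T)$ (and analogously for $D_0$) in each bracketed term and splitting, residual cross pieces involving $(I - C_1 C_1^T) C_0$ and $(I - D_1 D_1^T) D_0$ appear, and these must be controlled \emph{without} picking up a factor of $\|S_0^{-1}\|$, which is unbounded in the over-approximation regime. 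The resolution is to use the defining identity $(I - C_1 C_1^T) K_1 = 0$, rewritten as $(I - C_1 C_1^T) C_0 S_0 = -\Delta t (I - C_1 C_1^T) F(U^n) D_0$ (and the analogous identity for $D$), to replace the offending residuals by expressions in which any would-be $S_0^{-1}$ factor is absorbed before it ever appears in isolation; this yields $\|(I - P_H)(U^n + \Delta t F(U^n))\|_F \leq c\,(\e + \Delta t)\Delta t$ with $c$ depending only on $B$ and $L$.

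The global estimate will then follow routinely. The $C^2$ regularity of $U$ on $(0,T)$, together with the derivative bound $\|U''\|_F \leq LB$ coming from $U' = F(U)$ and Assumption~1, gives the standard forward Euler truncation estimate $\|U(t^n) + \Delta t F(U(t^n)) - U(t^{n+1})\|_F \leq \tfrac{1}{2} LB\,\Delta t^2$, while Lipschitz continuity of $F$ gives $\|(U^n + \Delta t F(U^n)) - (U(t^n) + \Delta t F(U(t^n)))\|_F \leq (1 + L\Delta t)\|U^n - U(t^n)\|_F$. Together with the local error bound, this produces the one-step recursion
\[
\|U^{n+1} - U(t^{n+1})\|_F \leq (1 + L\Delta t)\|U^n - U(t^n)\|_F + c'\,(\e \Delta t + \Delta t^2),
\]
and a standard discrete Gr\"onwall argument, using $n\Delta t \leq T$ and $\|U^0 - U(t^0)\|_F \leq \delta$, yields \eqref{eqn:proj_global_err} with $c_0 = e^{LT}$ and $c_1, c_2$ proportional to $(e^{LT}-1)/L$.
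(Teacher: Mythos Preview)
Your plan is sound and will work, but it takes a noticeably different route from the paper's argument. The paper's proof is extremely short: it splits $\|U(t^{n+1})-U^{n+1}\|_F \le \|(I-P_H)U(t^{n+1})\|_F + \|U(t^{n+1})-U_{\rm FE}^{n+1}\|_F$, observes that the bases $C_1,D_1$ are \emph{identical} to those produced by the original unconventional integrator, and therefore invokes Lemma~3 of \cite{ceruti2021unconventional} verbatim to bound the first term by the quantity $\vartheta=(4e^{L\Delta t}BC+9BL)\Delta t^2+(3e^{L\Delta t}+4)\e\Delta t$; the second term is the usual forward-Euler truncation error, and the global bound then follows from stability of the projected full-rank Euler map. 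In other words, the paper treats the robustness-to-small-singular-values issue as already solved and simply cites it.

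You instead compute the defect $(I-P_H)U_{\rm FE}^{n+1}$ explicitly---your identity $\Delta t[(I-C_1C_1^T)F(U^n)(I-D_0D_0^T)+C_1C_1^T(I-C_0C_0^T)F(U^n)(I-D_1D_1^T)]$ is correct and is a nice structural observation in its own right---and then bound it from scratch. This is more self-contained, but it forces you to \emph{reproduce} the delicate part of Lemma~3 of \cite{ceruti2021unconventional}: after you insert $C_0C_0^T+(I-C_0C_0^T)$ into the first bracket (and the $D$-analogue into the second), the cross terms like $(I-C_1C_1^T)C_0C_0^TF(U^n)(I-D_0D_0^T)$ are exactly where the $\|S_0^{-1}\|$ factor wants to appear, and the identity $(I-C_1C_1^T)C_0S_0=-\Delta t(I-C_1C_1^T)F(U^n)D_0$ alone does not make it disappear without further work (it gives control of $(I-C_1C_1^T)C_0S_0$, not of $(I-C_1C_1^T)C_0$). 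Your sketch is correct that this \emph{can} be done---it is precisely what \cite{ceruti2021unconventional} accomplishes---but be aware that the argument there is a differential one (comparing the $K$-flow to the full flow over $[t_0,t_0+\Delta t]$) rather than a purely algebraic substitution, so filling in this step will take more than the one line you allot it. The payoff of your route is an explicit defect formula and constants; the payoff of the paper's route is brevity.
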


Before showing the proof of \Cref{thm:proj_error}, we first derive a local truncation error estimate.
\begin{lemma}\label{lem:proj_lte}
Suppose $U(t^n) = U^n$ where each are defined in \Cref{thm:proj_error}, then assuming all suppositions in \Cref{thm:proj_error}, we have
\begin{align}\label{eqn:proj_lte}
    \|U(t^{n+1})-U^{n+1}\|_F \leq \Delta t(\hat{c}_1\e + \hat{c}_2\Delta t)
\end{align}
where $\hat{c_1}$ and $\hat{c_2}$ are positive constants that depend of $L$, $B$, and $T$.
\end{lemma}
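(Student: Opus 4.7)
The plan is to reduce the estimate to the known local truncation error of the unconventional integrator (Algorithm~\ref{alg:uc}) from \cite{ceruti2021unconventional} by comparing the two algorithms through their common K- and L-steps. Let $\overline{U}^{n+1}$ denote the result of one step of Algorithm~\ref{alg:uc} started from $U^n$; then by the triangle inequality,
\begin{equation*}
\|U(t^{n+1}) - U^{n+1}\|_F \le \|U(t^{n+1}) - \overline{U}^{n+1}\|_F + \|\overline{U}^{n+1} - U^{n+1}\|_F.
\end{equation*}
For the first term I would invoke the single-step estimate implicit in the proof of Theorem~4 of \cite{ceruti2021unconventional}, specialized to vanishing initial error ($\delta = 0$); under the present hypotheses this gives $\|U(t^{n+1}) - \overline{U}^{n+1}\|_F \le \Delta t(c_1\e + c_2\Delta t)$ with constants depending only on $L$, $B$, and $T$.

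The second term is handled by direct comparison. Algorithms~\ref{alg:uc} and \ref{alg:proj} share the same K-step and L-step, so they produce the same bases $C_1$ and $D_1$ and the same pre-update $\tilde S = (C_1^T C_0) S_0 (D_0^T D_1) = C_1^T U^n D_1$. The sole difference lies in the S-step: Algorithm~\ref{alg:uc} uses $F(C_1 \tilde S D_1^T) = F(P_H U^n)$ with $P_H Z := C_1 C_1^T Z D_1 D_1^T$, while Algorithm~\ref{alg:proj} uses $F(U^n)$. Subtracting,
\begin{equation*}
U^{n+1} - \overline{U}^{n+1} = \Delta t\, C_1 C_1^T[F(U^n) - F(P_H U^n)] D_1 D_1^T,
\end{equation*}
so orthogonality of $C_1, D_1$ and the Lipschitz bound of assumption~(1) give $\|U^{n+1} - \overline{U}^{n+1}\|_F \le L\Delta t\,\|U^n - P_H U^n\|_F$. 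The task therefore reduces to showing $\|U^n - P_H U^n\|_F = O(\Delta t)$.

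To this end, I would write $U^n - P_H U^n = (I - C_1 C_1^T) U^n + C_1 C_1^T U^n(I - D_1 D_1^T)$. Since $C_1$ is a QR factor of $K_1 = C_0 S_0 + \Delta t F(U^n) D_0$, we have $(I - C_1 C_1^T) K_1 = 0$, and using $U^n = C_0 S_0 D_0^T$ this rearranges to
\begin{equation*}
(I - C_1 C_1^T) U^n = -\Delta t (I - C_1 C_1^T) F(U^n) D_0 D_0^T,
\end{equation*}
whose Frobenius norm is at most $\Delta t B$ by the uniform bound on $F$. An analogous manipulation starting from $(I - D_1 D_1^T) L_1 = 0$, with $L_1 = D_0 S_0^T + \Delta t F(U^n)^T C_0$, yields $\|U^n(I - D_1 D_1^T)\|_F \le \Delta t B$. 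Summing gives $\|U^n - P_H U^n\|_F \le 2\Delta t B$, hence $\|\overline{U}^{n+1} - U^{n+1}\|_F \le 2 L B\,\Delta t^2$. Combining with the UC bound yields \eqref{eqn:proj_lte} with $\hat c_1 = c_1$ and $\hat c_2 = c_2 + 2LB$, which depend only on $L$, $B$, and $T$.

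The main obstacle I anticipate is the appeal to the local truncation estimate for the unconventional integrator: \cite{ceruti2021unconventional} states only a global result, so one must either extract the one-step bound from within its proof or reproduce it here. Crucially, one cannot hope to bound $\|\UFE - P_H \UFE\|_F$ directly by $O(\Delta t\e + \Delta t^2)$ without a similar detour, because $(I - C_1 C_1^T) C_0$ cannot be controlled independently of $\sigma_{\min}(S_0)$; routing through the unconventional integrator sidesteps this difficulty via the Galerkin S-step. The remainder of the argument is essentially bookkeeping enabled by the QR-derived orthogonality identities from the K- and L-steps.
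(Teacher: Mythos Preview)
Your proof is correct but follows a genuinely different route from the paper's. The paper splits
\[
\|U(t^{n+1})-U^{n+1}\|_F \le \|(I-P_H)U(t^{n+1})\|_F + \|P_H U(t^{n+1}) - P_H U_{\rm FE}^{n+1}\|_F,
\]
using $U^{n+1}=P_H U_{\rm FE}^{n+1}$. The first term is bounded directly by Lemma~3 of \cite{ceruti2021unconventional} (which controls how well the subspace $H$ built from the K/L steps captures the \emph{exact} solution $U(t^{n+1})$), and the second by the standard forward Euler local truncation error since $P_H$ is non-expansive. Your approach instead routes through the unconventional integrator output $\overline{U}^{n+1}$, invoking the one-step bound implicit in Theorem~4 of \cite{ceruti2021unconventional}, and then controls $\|\overline U^{n+1}-U^{n+1}\|_F$ by a direct comparison of the S-steps together with the neat QR-based identity $(I-C_1C_1^T)U^n = -\Delta t(I-C_1C_1^T)F(U^n)D_0D_0^T$. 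The paper's route is slightly more economical, since Lemma~3 is an explicitly stated result (addressing precisely the concern you raise about extracting a local estimate), while your route yields as a by-product the explicit bound $\|U^n-P_HU^n\|_F\le 2B\Delta t$ and quantifies the discrepancy between the two integrators as $\le 2LB\Delta t^2$, which is informative in its own right.
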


\begin{proof}
We introduce $\vartheta$ from Equation 6 of \cite{ceruti2021unconventional}:
\begin{equation}
    \vartheta = (4e^{L\delta t}BC + 9BL)\Delta t^2 + (3e^{L\Delta t}+4)\e\Delta t + e^{Lh}\delta.
\end{equation}
Since $U(t^n) = U^n$, then $\delta=0$ for this estimate.  Let $U_{\text{FE}}^{n+1} = U^n + \Delta t F(U^n)$.  Since $U^{n+1}=P_HU_{\text{FE}}^{n+1}$ and by a triangle inequality we have
\begin{align}
\begin{split}
    \|U(t^{n+1})-U^{n+1}\|_F &\leq \|U(t^{n+1})-P_HU(t^{n+1})\|_F + \|P_HU(t^{n+1})-P_HU_{\text{FE}}^{n+1}\|_F \\
    &\leq \|(I-P_H)U(t^{n+1}) \|_F + \|U(t^{n+1})-U_{\text{FE}}^{n+1}\|_F =: I_1 + I_2.
\end{split}
\end{align}
By Lemma 3 of \cite{ceruti2021unconventional}, $\|I_1\|_F\leq \vartheta$.  Moreover, since $U$ is sufficiently smooth, the full-rank forward Euler update satisfies 
\begin{equation}
    \|I_2\|_F \leq C(\Delta t)^2
\end{equation}
for $C$ dependent on $L$ and $B$.  Therefore we have \eqref{eqn:proj_lte}.  The proof is complete. 
\end{proof}

Borrowing the stability of the full-rank forward Euler update, we can now prove \Cref{thm:proj_error}

\begin{proof}[Proof of \Cref{thm:proj_error}]
Since the full-rank forward Euler iteration is stable, then so must the projection unconventional integrator since it is a projected version of the full-rank update.  Thus we can extend the local error estimate from \Cref{lem:proj_lte} to the global estimate \eqref{eqn:proj_global_err}.  The proof is complete.
\end{proof}

\section{Adaptive Algorithms} \label{sect:adapt}

The algorithms listed in \Cref{sect:DLRA} all operate on a fixed rank-$r$ manifold $\mM_r$.  Choosing $r$ too small risks polluting the solution with error from the low-rank approximation, while choosing $r$ too large wastes memory and computational resources.  To illustrate this point, we show in \Cref{fig:DLRA_advance} the results for  the discretized 2D solid body rotation rotation problem \eqref{eqn:num_prob} with a box initial condition.  For small values of $r$ accuracy of the solution is severely degraded.  However, for $r$ sufficiently large, but still much less that the full-rank, the accuracy of the solution is reasonable and further increasing $r$ does not significantly improve the solution. 

Unfortunately, an efficient choice for $r$ is rarely known a priori, and adaptive rank algorithms are needed.    In the remainder of this section, we will present two recent adaptive algorithms from the literature and introduce a new predictor-corrector approach.

\begin{figure}
    \centering
    \includegraphics[width=0.48\textwidth]{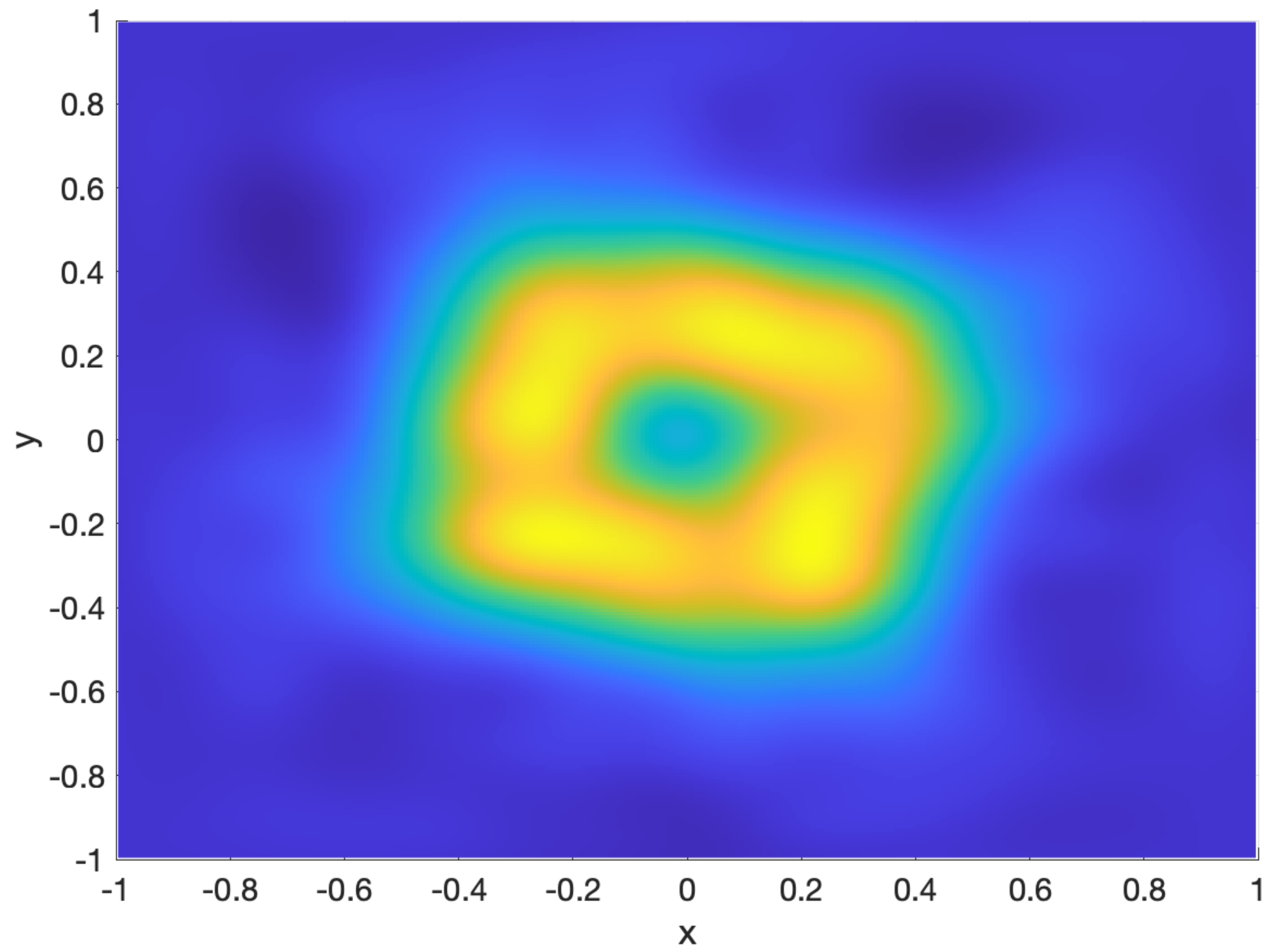}
    \includegraphics[width=0.48\textwidth]{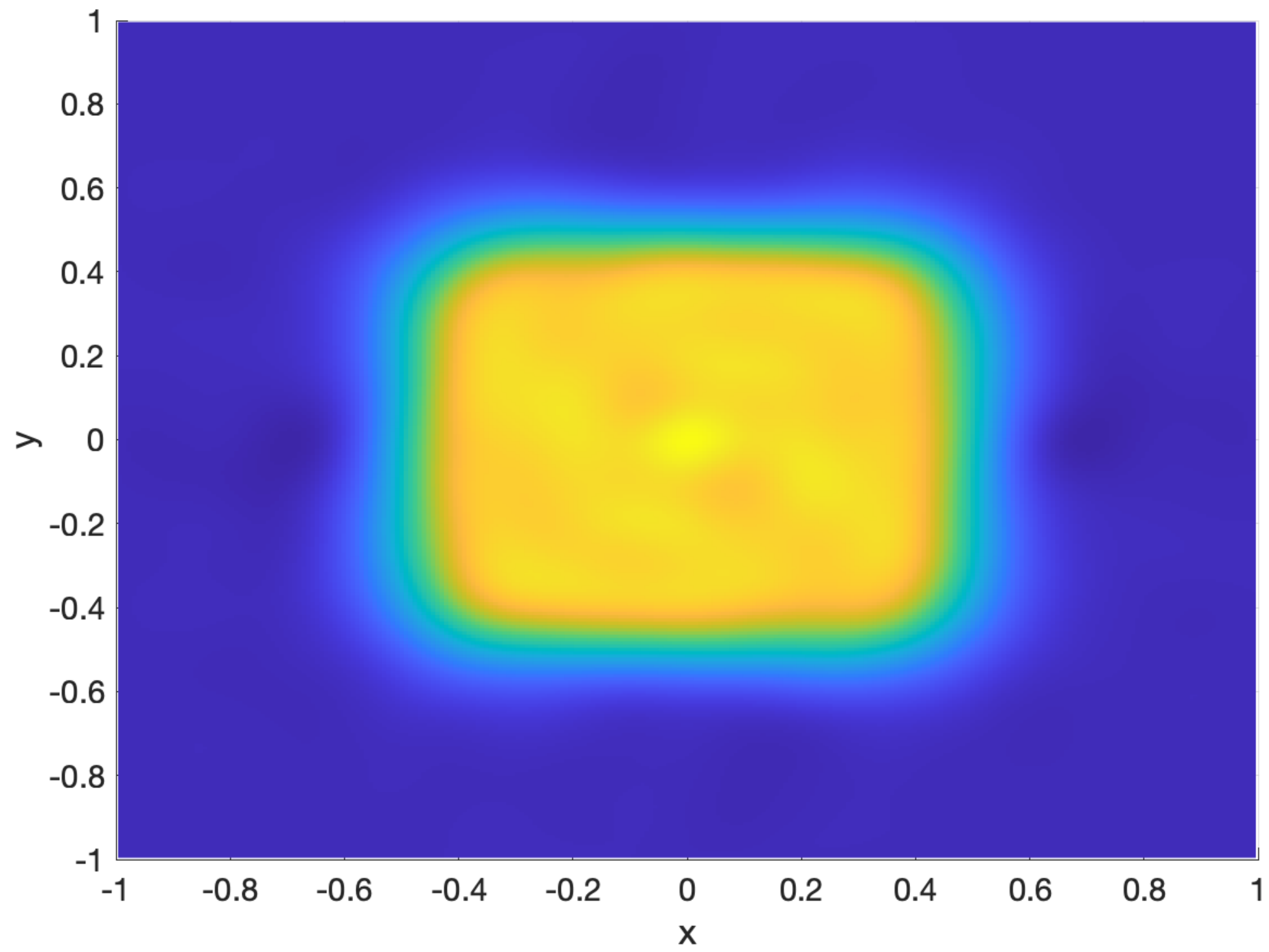}
    
    \includegraphics[width=0.48\textwidth]{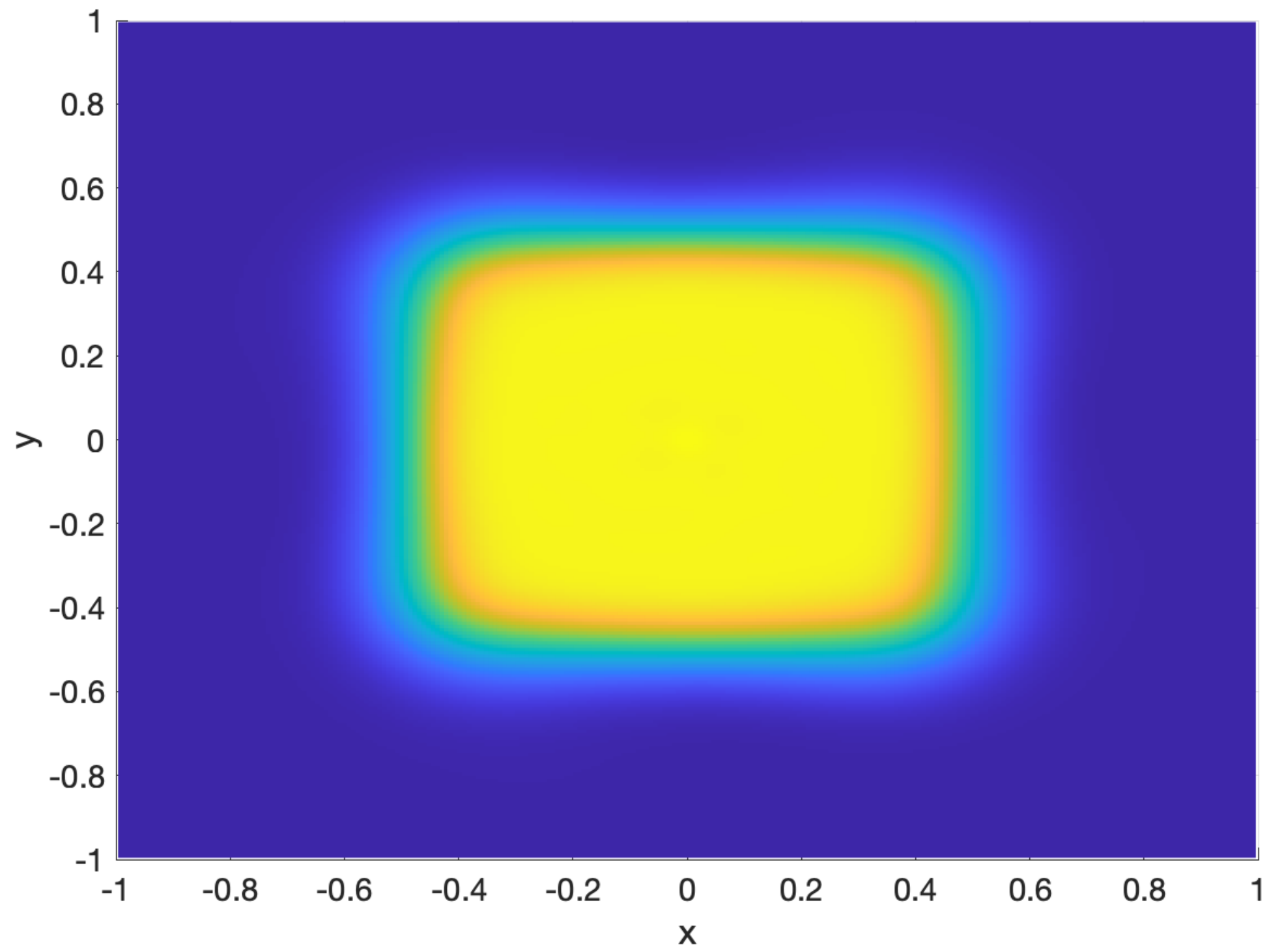}
    \includegraphics[width=0.48\textwidth]{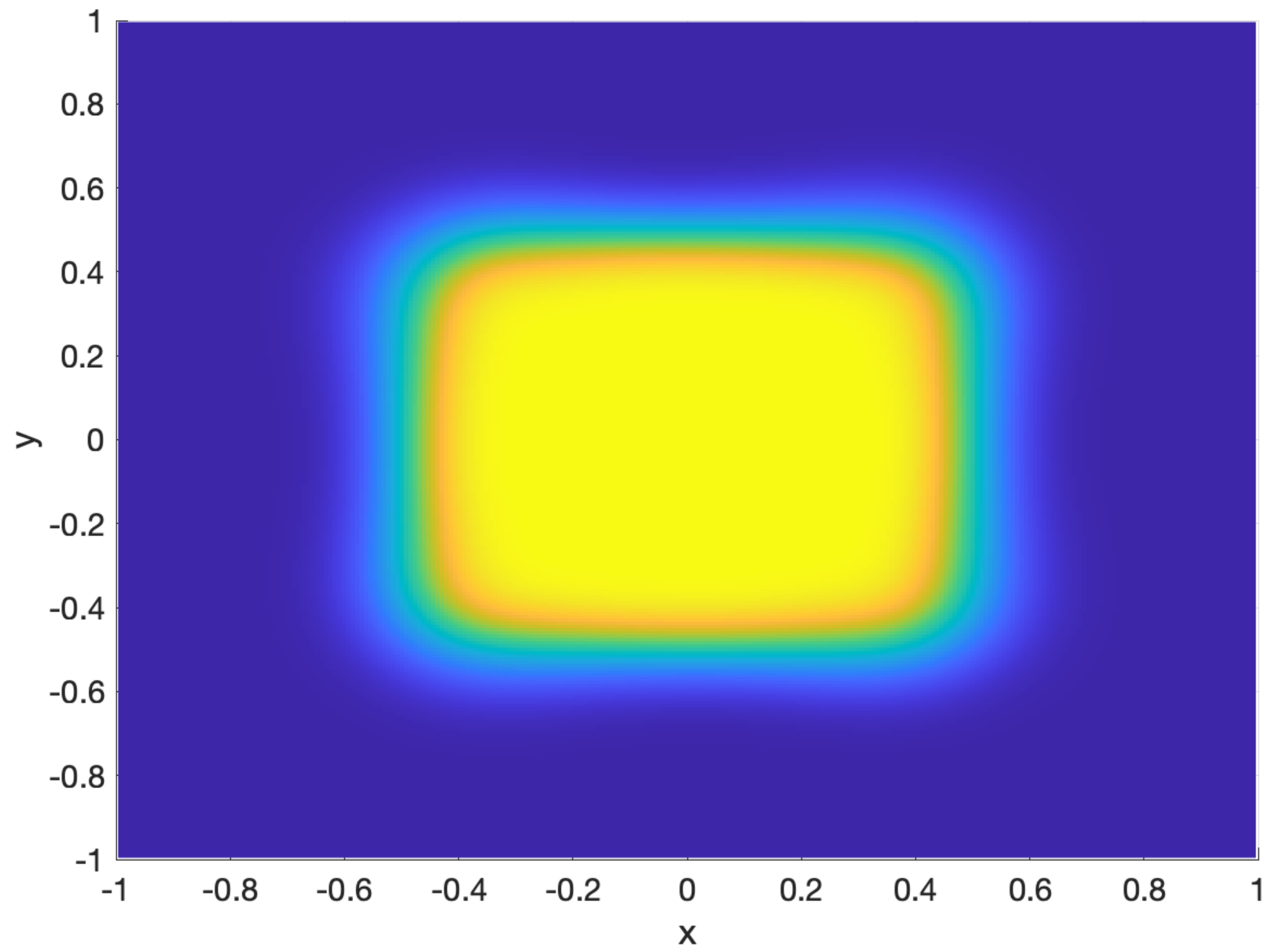}
    \caption{Discrete DLRA solutions for \eqref{eqn:pde} with fixed rank 4, 9, 17, 256 (left to right then top to bottom) at $T=\pi$.  See \Cref{sect:results} for spacial discretization details.  Here the unconventional integrator with forward Euler timestepping was used (see \Cref{alg:uc}) with $\Delta t = 1/4096$. The initial condition \eqref{eqn:solid_body_box_IC} is rank 1 and the rank grows as the box steps through its first $90^{\circ}$ rotation.  It appears that a rank of 17, much lower than the full-rank of 256, is sufficient to accurately approximate the solution.}
    \label{fig:DLRA_advance}
\end{figure}

\subsection{A Rank-Adaptive Unconventional Integrator} \label{subsect:lub_adapt}

A rank-adaptive modification of the unconventional integrator was recently developed in \cite{ceruti2022rank}.  From \Cref{subsec:uc}, the basis at $t^n$, $C_0$ and $D_0$,  and updated basis, $C_1$ and $D_1$, must both be kept in memory at the same time in order to project the solution at $t^n$ into the subspace spanned by the updated basis.  The rank-adaptive unconventional integrator takes advantage of this by modifying lines 5-6 of \Cref{alg:uc} and defining the basis at $t^{n+1}$ to be composed of $\begin{bmatrix}C_0 & C_1\end{bmatrix}$ and $\begin{bmatrix}D_0 & D_1\end{bmatrix}$.    Just like the tangent integrator, since $\begin{bmatrix}C_0 & C_1\end{bmatrix}$ contains $2r$ basis vectors, then the rank would exponentially grow in time; thus culling the rank is required.  The rank-adaptive unconventional integrator with forward Euler timestepping is shown in \Cref{alg:ra_uc}. 

\begin{algorithm}[H] 
\DontPrintSemicolon
\SetKwInOut{Output}{Output~}
\SetKwInOut{Input}{Input~}
\caption{Forward Euler Timestepping with rank-adaptive Unconventional Integrator}\label{alg:ra_uc}
\Input{$\tau > 0$ \tcp*{tolerance}}
\Input{$C_0\in\R^{m\times r_0},S_0\in\R^{r_0\times r_0},D_0\in\R^{n\times r_0}$ \tcp*{ $U^n=C_0S_0D_0^T$}
}
\Output{$C_1\in\R^{m\times r_1},S_1\in\R^{r\times r_1},D_1\in\R^{n\times r_1}$ \tcp*{ $U^{n+1}=C_1S_1D_1^T$}}
\BlankLine
$K_1 = C_0S_0 + \Delta t F(C_0S_0D_0^T)D_0$\;
$C_1 = \textsf{qr}(\begin{bmatrix}C_0 &K_1]\end{bmatrix}$\;
$L_1 = D_0S_0^T + \Delta t F(C_0S_0D_0^T)^TC_0$\;
$D_1 = \textsf{qr}(\begin{bmatrix}D_0 &L_1]\end{bmatrix}$\;
$\tilde{S} = (C_1^TC_0)S_0(D_0^TD_1)$\;
$\tilde{S}_1 = \tilde{S} + \Delta t C_1^TF(C_1\tilde{S}D_1^T)D_1$\;
$[S_C,S_1,S_D]=\text{svd}(\tilde{S_1})$\;
$r_1 = \min\{r\in\N: 1\leq r\leq 2r_0 \text{ and } \sum_{i=r+1}^{2r_0}\sigma_i(S_1)^2 < \tau^2$\;
$C_1 = C_1S_C(:,1\!:\!r_1)$; $D_1 = D_1S_D(:,1\!:\!r_1)$; $S_1 = S_1(1\!:\!r_1,1\!:\!r_1)$  
\end{algorithm}


\subsection{Adaptivity Via Step Truncation} \label{subsect:fra}

Methods that compute the full-rank update, that is storing the evaluation $F(U)$, and then cull in post-processing are often labeled as step truncation methods \cite{dektor2021rank}.  Here we present one such method from \cite{guo2022low} that does not rely on the DLRA framework.  For simplicity, assume $F(U)=AUB^T$ for some $m\times m$ matrix $A$ and $n\times n$ matrix $B$.   If $U^n$ is rank-$r$ with decomposition $U^n=CSD^T$, then 
\begin{align}\label{eqn:fra_1}
    \UFE = CSD^T + \Delta t ACSD^TB^T 
    = \begin{bmatrix}
    C & AC
    \end{bmatrix}
    \begin{bmatrix}
    S & 0 \\ 0 & \Delta tS\end{bmatrix}
    \begin{bmatrix}
    D & BD
    \end{bmatrix}^T.
\end{align}
In terms of the QR factorizations
$
    \begin{bmatrix}
    C & AC
    \end{bmatrix}= C_1R_C $
and
$   \begin{bmatrix}
     D & BD
     \end{bmatrix}= D_1R_D
$,
where $C_1$ and $D_1$ are orthogonal,  \eqref{eqn:fra_1} can be rewritten in the low-rank format
\begin{align}\label{eqn:fra_2}
    \UFE = C_1\tilde{S}D_1^T, \quad \text{ where }\tilde{S}=R_C\begin{bmatrix}
    S & 0 \\ 0 & \Delta tS\end{bmatrix}R_D^T.
\end{align}
Thefactorization in \eqref{eqn:fra_2} shows that $\UFE$ can have rank at most $2r$. 

As with the tangent projection method in \Cref{subsec:tan}, the rank of the forward Euler update will grow exponentially with the number of timesteps, unless culling is applied.  However, unlike the tangent projection method, \eqref{eqn:fra_2} contains no modeling error; thus the only error from the low-rank approximation is made in the culling.  To maintain an $\mathcal{O}(\Delta t)$ method, the tail of the singular values is culled until the portion removed is $\mathcal{O}(\Delta t^2)$.  Extensions to higher order methods can be found in \cite{guo2022low}.  \Cref{alg:fra} shows the rank-adaptive step truncation with forward Euler time-stepping applied to an $N$-separable operator.

\begin{algorithm}[H] 
\DontPrintSemicolon
\SetKwInOut{Output}{Output~}
\SetKwInOut{Input}{Input~}
\caption{Forward Euler Timestepping with rank-adaptive Step Truncation}\label{alg:fra}
\Input{$\tau > 0$ \tcp*{tolerance}}
\Input{$C_0\in\R^{m\times r_0},S_0\in\R^{r_0\times r_0},D_0\in\R^{n\times r_0}$ \tcp*{ $U^n=C_0S_0D_0^T$}
}
\Input{$A_\kappa \in \R^{m\times m}, B_\kappa \in \R^{n\times n}, \kappa = 1,\dots,N$ \tcp*{$F(U)=\sum_{\kappa=1}^N A_\kappa U B_\kappa^T$} 
}
\Output{$C_1\in\R^{m\times r_1},S_1\in\R^{r\times r_1},D_1\in\R^{n\times r_1}$ \tcp*{ $U^{n+1}=C_1S_1D_1^T$}}
\BlankLine
$K_1 = \begin{bmatrix}
 C_0 & A_1C_0 & A_2C_0 & \dots & A_N C_0 
\end{bmatrix}
\in\R^{m\times(N+1)r_0}$
\;
$[C_1,R_C] = \textsf{qr}(K_1)$. \;
$L_1 = \begin{bmatrix}
 D_0 & B_1D_0 & B_2D_0 & \dots & B_N D_0
\end{bmatrix}
\in\R^{n\times(N+1)r_0}$\;
$[D_1,R_D] = \textsf{qr}(L_1)$\;
$\tilde{S} = R_C\text{diag}(S,\Delta t S,\Delta t S,\ldots,\Delta t S)R_D^T$\;
$[S_C,S_1,S_D]=\textsf{svd}(\tilde{S})$ \; 
$r_1 = \min\{r \in \mathbb{N} : 1\leq r\leq (N+1)r_0 \text{ and } \sum_{i=r+1}^{(N+1)r_0}\sigma_i(S_1)^2 < \tau^2\} $\;
$C_1 = C_1S_C(:,1\!:\!r_1)$; $D_1 = D_1S_D(:,1\!:\! r_1)$; $S_1 = S_1(1\!:\! r_1,1\!:\!r_1)$  
\end{algorithm}

An advantage of step truncation is that in a single timestep consistency error created in order to obtain a low-rank solution is only created via a post-processing step that can be easily controlled.  A disadvantage is that the memory footprint for storing the full-rank update is directly proportional to the number of terms in $F$. For a general $N$-separable bilinear form, a rank $(N+1)r$ decomposition needs to be stored even if the solution $U(t^{n+1})$ is close to a rank-$r$ matrix.

\subsection{Predictor-Corrector Rank Adaptivity}\label{subsect:rara}
We now present a new rank-adaptive algorithm that involves two steps.   The predictor step produces a low-rank solution $U_\text{P}^{n+1}$ by any of the numerical DLRA integrators from \Cref{sect:DLRA}.  The corrector step builds a low-rank approximation of the modeling error of the DLRA integrator which we also refer to as the residual $R$: 
\begin{align}\label{eqn:residual_def}
    R = \UFE - U_{\text{P}}^{n+1}.
\end{align}
where 
\begin{equation}\label{eqn:UFE}
    \UFE := U^{n} + \Delta t F(U^n).
\end{equation}
 
We summarize the two steps below; details are provided in \Cref{alg:resid_adapt}. For the remainder of this subsection we assume $U^n = C_0S_0D_0^T$ is a rank-$r$ decomposition of $U^n$ and $U_{\text P}^{n+1}$ has a rank $r_1$ decomposition $U_{\text P}^{n+1}=C_1S_1D_1^T$ where $r_1=r$ if \Cref{alg:uc} or \Cref{alg:proj} is used to compute $U_{\text P}^{n+1}$ and $r_1 = 2r$ if \Cref{alg:tan} is used to compute $U_{\text P}^{n+1}$.

\subsubsection{An randomized algorithm-based error estimator}

The goal is to approximate $\|R\|_F$ and use this approximation to determine whether to add or decrease the rank of the predictor.  Rank can then be added as needed using the principal singular vectors of the singular value decomposition (SVD) for $R$.  Rather than compute the $mn$ individual entries of $R$, we instead seek a low-rank approximation, which we denote $\tilde{R}$, that can be computed efficiently in terms of operation count and memory requirements.  Matrix-free SVD algorithms are memory efficient, but numerical tests of the \textsf{svds} algorithm in MATLAB reveal that the time to compute $\tilde{R}$ is often several orders of magnitude longer than the computation time of the $U_{\text P}^{n+1}$. 

To efficiently compute $\tilde{R}$, we turn to a randomized SVD (R-SVD) approach.  The randomized SVD algorithm shown in \Cref{alg:ra_svd} is sourced  from Algorithms 4.1 and 5.1 of \cite{halko2011finding}.  To summarize, a random sketching matrix $Y \in \mathbb{R}^{n \times (l+p)} $ is created, and then an approximation to the $\Col(R)$ is formed using $RY$.  The parameter $l\in\N$ specifies that $\tilde{R}$ should be close to a rank $l$ approximation of $R$.  The \textit{oversampling parameter} $p\in\N$ is the number of additional samples used in order to increase the probability that $\|R-\tilde{R}\|_F$ is small.

To extract an orthogonal basis for $\Col(RY)$, we use the QR factorization $RY=QP$ where $Q\in\R^{m\times(l+p)}$ is orthogonal and $P \in \R^{(l+p) \times (l+p)}$ is not used.  Then we set $\tilde{R}=QQ^TR$.  Because $Q$ is orthogonal, we only need the SVD of $B^T:=Q^TR\in\R^{(l+p)\times n}$ to form an SVD of $\tilde{R}$.  Specifically if $B = \tilde D \tilde S X^T$ is the SVD of $B$, then
\begin{equation}\label{eq:tildeR}
    \tilde R = QQ^T R = Q B^T = Q X \tilde S \tilde D^T =  \tilde C \tilde S \tilde D^T 
\end{equation}
where $\tilde C = QX$.  Moreover, since $B$ has only $l+p$ columns, a direct SVD can be computed quite cheaply.  This yields a low-rank decomposition of $\tilde{R}$.  

\begin{algorithm}[H] 
\DontPrintSemicolon
\SetKwInOut{Output}{Output~}
\SetKwInOut{Input}{Input~}
\caption{Randomized SVD}\label{alg:ra_svd}
\KwIn{$l\in\N$ \tcp*{Approximation rank}}
\KwIn{$p\in\N$ \tcp*{Oversampling parameter}}
\KwIn{$\RLR$ \tcp*{$\RLR \sim R$ (see \Cref{def:RLR}) }}
\KwOut{$\tilde C\in\R^{m\times l+p},\tilde S\in\R^{l+p\times l+p},\tilde D^{n\times l+p}$ \tcp*{$\tilde{R} = \tilde{C}\tilde{S}\tilde{D}^T$}}
\BlankLine
Construct $Y\in\R^{n\times l+p}$ whose entries are identically and independently distributed Gaussian random variables \;
$[Q,\sim] = \textsf{qr}(\RLR Y)$\;
$B = \RLR^TQ$\;
$[\tilde D,\tilde S,X] = \textsf{svd}(B)$\;
$\tilde C = QX$\;
\end{algorithm}

Since the residual $R$ from \eqref{eqn:residual_def} contains the term $\Delta t F(U^n)$, the most expensive operation in \Cref{alg:ra_svd} is the evaluation of the products $F(U^n)Y$ and $F(U^n)^TQ$.  If $l+p\approx r$, then the computational cost of building $\tilde{R}$ will be similar to any of the DLRA updates in \Cref{sect:DLRA} as all of these methods require the products $F(U^n)C_0$ and $F(U^n)^TD_0$.

The statistical accuracy of this approximation is provided by \cite[Theorem 10.7]{halko2011finding}:

\begin{prop} \label{prop:rand-svd-error}
Set $d=\min\{m,n\}$.  Suppose $\sigma_1\geq\sigma_2\geq\cdots\geq\sigma_d$ are the singular values of $R$.  Suppose $l\geq 2$ and $p\geq 4$; then $
\tilde{R}$ created by \Cref{alg:ra_svd} satisfies the following error bound:
\begin{align}
    \|R-\tilde{R}\|_F \leq \left(1+10\sqrt{\tfrac{l}{p}}\right)\bigg(\sum_{j>  l}\sigma_j^2\bigg)^{1/2}+12\frac{\sqrt{p(l+p)}}{p+1}\sigma_{l+1}
\end{align}
with failure probability at most $7e^{-p}$.
\end{prop}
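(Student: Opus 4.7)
The stated bound is Theorem 10.7 of \cite{halko2011finding}, so the natural plan is to follow that proof, which cleanly separates into a deterministic structural step and a probabilistic step on the Gaussian sketch. I would first set up the full SVD $R = U\Sigma V^T$ and partition $V=[V_1\ V_2]$, where $V_1\in\R^{n\times l}$ collects the top $l$ right singular vectors and $V_2$ the remaining $d-l$. Writing $\Omega_1 = V_1^T Y$ and $\Omega_2 = V_2^T Y$, I note that $Q$ produced by \Cref{alg:ra_svd} is an orthonormal basis for $\Col(RY)$; hence $\tilde R = QQ^T R$ from \eqref{eq:tildeR} and the R-SVD error is exactly the orthogonal projection residual $\|R-\tilde R\|_F=\|(I-QQ^T)R\|_F$.

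The first main step is the deterministic structural inequality of \cite[Theorem 9.1]{halko2011finding}: provided $\Omega_1$ has full row rank,
\[
\|(I-QQ^T)R\|_F^2 \le \|\Sigma_2\|_F^2 + \|\Sigma_2\,\Omega_2\,\Omega_1^\dagger\|_F^2,
\]
where $\Sigma_2=\mathrm{diag}(\sigma_{l+1},\ldots,\sigma_d)$. The derivation proceeds by constructing an explicit candidate element of $\Col(RY)$ close to $R$; the algebra cleanly isolates the ``trust'' that the sketch places on the top-$l$ subspace (captured by $\Omega_1^\dagger$) versus the leakage from the tail (captured by $\Omega_2$). Notably, this bound involves no randomness.

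The second main step is to exploit the Gaussianity of $Y$. Because $V$ is orthogonal, the blocks $\Omega_1\in\R^{l\times(l+p)}$ and $\Omega_2\in\R^{(d-l)\times(l+p)}$ inherit independent standard Gaussian entries. Conditioning on $\Omega_1$, the quantity $\|\Sigma_2\Omega_2\Omega_1^\dagger\|_F$ is a Lipschitz function of the Gaussian $\Omega_2$, and I would apply a Gaussian concentration inequality (\cite[Proposition 10.3]{halko2011finding}) to control its deviation from the mean. I would then handle $\Omega_1^\dagger$ by invoking the spectral and Frobenius-norm tail and moment bounds for pseudoinverses of Gaussian matrices (\cite[Propositions 10.1--10.2 and Theorem 8.7]{halko2011finding}), which require $p\ge 4$ so that the relevant moments are finite. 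Combining the deterministic bound with these two tail estimates, applying the square-root inequality $\sqrt{a+b}\le\sqrt a+\sqrt b$, and taking a union bound produces the asserted constants $1+10\sqrt{l/p}$ and $12\sqrt{p(l+p)}/(p+1)$, with the three exceptional events contributing a total failure probability of $7e^{-p}$.

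The principal obstacle is the bookkeeping of the exact numerical constants through each Gaussian concentration step: the $10$, $12$, and $7$ are not order-of-magnitude placeholders but specific values obtained by optimizing the slack in the tail bounds, and reproducing them requires the sharp form of \cite[Theorem 8.7]{halko2011finding}. Since this result is a direct restatement of an established theorem, the practical plan is to appeal to \cite{halko2011finding} rather than reproduce the full constant-level calculation; the role of the proposition in the present paper is simply to quantify that $\tilde R$ approximates $R$ with the accuracy needed downstream in the predictor-corrector error analysis.
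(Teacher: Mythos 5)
Your proposal is correct and matches the paper's approach: the paper's proof is simply a citation of \cite[Theorem 10.7]{halko2011finding}, obtained by choosing the free parameters $t=e$ and $u=\sqrt{2p}$ in that theorem's parametrized deviation bound and rounding the resulting constants up to whole numbers (which is where the $10$, $12$, and $7e^{-p}$ come from, rather than from re-optimizing the tail estimates). Your sketch of the internal structure of Halko et al.'s argument is accurate but not needed, since the paper, like your final paragraph, just appeals to the cited theorem.
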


\begin{proof}
This result follows from setting $t=e$ and $u=\sqrt{2p}$ in \cite[Theorem 10.7]{halko2011finding} and bounding the resulting constants by the nearest whole number.
\end{proof}

Using the R-SVD, an error indicator for the predictor and a subsequent corrector can be constructed.  While the rank of $R$ is not known a priori, it can be bounded when $F$ is an $N$-separable linear operator.
\begin{prop} \label{prop:R-rank}
Let $F$ be an $N$-separable linear operator; let $U^n$ have rank-$r$; and let $R$ be given in \eqref{eqn:residual_def}.  Then the maximum rank of $R$ is either (i) $(N+2)r$ if \Cref{alg:uc} or \Cref{alg:proj} are used to compute $U_{\rm{P}}^{n+1}$ or (ii) $Nr$ if \Cref{alg:tan} is used to compute $U_{\rm{ P}}^{n+1}$.
\end{prop}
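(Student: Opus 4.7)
The plan is to use the $N$-separable structure of $F$ to bound the rank of the full-rank forward Euler update $\UFE$, and then combine this with whatever rank bound the specific predictor produces. The key preliminary calculation is that, since $F(W) = \sum_{\kappa=1}^N A_\kappa W B_\kappa^T$ and $U^n = C_0 S_0 D_0^T$, we can write
\begin{equation*}
  F(U^n) \;=\; \sum_{\kappa=1}^N (A_\kappa C_0)\, S_0\, (B_\kappa D_0)^T,
\end{equation*}
whose column space is contained in the span of $\{A_\kappa C_0\}_{\kappa=1}^N$; thus $\rank(F(U^n))\leq Nr$. Adding $U^n$ then gives $\rank(\UFE)\leq (N+1)r$.

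For case (i), with $\UUC := U_{\rm P}^{n+1}$ produced by \Cref{alg:uc} or \Cref{alg:proj}, the predictor has a rank-$r$ factorization $C_1 S_1 D_1^T$ by construction. I would simply invoke subadditivity of rank on $R = \UFE - U_{\rm P}^{n+1}$ to conclude $\rank(R) \le (N+1)r + r = (N+2)r$. No cancellation is needed or expected here because the columns of $C_1$ generally leave the span of the $A_\kappa C_0$.

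The interesting case is (ii). Here the naive rank bound would be $(N+1)r + 2r = (N+3)r$, so I need to exploit the structure of the tangent integrator. Using the explicit form \eqref{eqn:tan_fe}, namely $U_{\rm P}^{n+1} = U^n + \Delta t\, P_{T_{U^n}} F(U^n)$, I would cancel the $U^n$ terms to obtain
\begin{equation*}
  R \;=\; \Delta t\bigl(F(U^n) - P_{T_{U^n}} F(U^n)\bigr) \;=\; \Delta t\,(I - C_0 C_0^T)\, F(U^n)\, (I - D_0 D_0^T),
\end{equation*}
where the second equality follows from the projector formula \eqref{eqn:proj} applied to $Z=F(U^n)$. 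Multiplication by the oblique-free projectors $(I - C_0 C_0^T)$ and $(I - D_0 D_0^T)$ cannot increase rank, so $\rank(R) \le \rank(F(U^n)) \le Nr$, as claimed.

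The only delicate point is recognizing this last identity, i.e.\ that the tangent predictor removes not only the rank-$r$ contribution of $U^n$ but also aligns the residual to lie in the doubly-projected complement, which is why the bound sharpens from $(N+3)r$ to $Nr$. Once this is observed, the entire proposition is a two-line rank count plus one algebraic simplification. I would present the two cases in the order (i) then (ii), so that the reader first sees the straightforward subadditivity argument and then the orthogonality-based sharpening for the tangent integrator.
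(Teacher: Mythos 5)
Your proof is correct and follows essentially the same route as the paper: the decisive step in case (ii) is the identical observation that $R = \Delta t\,(I-C_0C_0^T)F(U^n)(I-D_0D_0^T)$, and case (i) is the same rank count, which the paper realizes by writing out the explicit block factorization $\begin{bmatrix}C_0 & A_1C_0 & \cdots & A_NC_0 & C_1\end{bmatrix}\mathrm{diag}(S_0,\Delta t S_0,\ldots,\Delta t S_0,-S_1)\begin{bmatrix}D_0 & B_1D_0 & \cdots & B_ND_0 & D_1\end{bmatrix}^T$ rather than by invoking subadditivity of rank. The only practical difference is that the paper's explicit factorizations are reused later (as the low-rank representation $\RLR$ in \Cref{def:RLR}), so they are worth recording even though your subadditivity argument suffices for the rank bound itself.
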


\begin{proof}
Let $U^n = C_0S_0D_0^T$ be a rank-$r$ decomposition of $U^n$, Since $F$ is $N$-separable, it has a decomposition (see \eqref{eqn:N-sep-form}) of the form
\begin{equation}
F(U^n) = \sum_{\kappa=1}^N A_\kappa U^n B_{\kappa}^T = \sum_{\kappa=1}^N (A_\kappa C_0) S_0 ( B_{\kappa} D_0)^T
\end{equation}
If \Cref{alg:uc} or \Cref{alg:proj} are used to compute $U_{\text P}^{n+1}$, then $U_\text{P}^{n+1} = C_1S_1D_1^T$ also has rank-$r$, and a direct calculation shows that
\begin{align}\label{eqn:R-rank-1}
\begin{split}
    R &= \UFE - U_{\text P}^{n+1} = C_0S_0D_0^T + \Delta t\sum_{\kappa=1}^N (A_\kappa C_0)S_0(B_\kappa D_0)^T - C_1S_1D_1^T\\
    &= \begin{bmatrix}C_0 & A_1C_0 & \cdots & A_NC_0 & C_1 \end{bmatrix}\text{diag}(S_0,\Delta tS_0,\ldots,\Delta tS_0,-S_1)\begin{bmatrix}D_0 & B_1D_0 & \cdots & B_ND_0 & D_1 \end{bmatrix}^T
\end{split}
\end{align}
has a decomposition that is at most rank-$(N+2)r$.  If \Cref{alg:tan} is used to compute $U_{\text P}^{n+1}$, then
\begin{align}\label{eqn:R-rank-2}
\begin{split}
    R &= \UFE - U_{\text P}^{n+1} \\
    &= U^n + \Delta tF(U^n) - \left(U^n + \Delta t(C_0C_0^TF(U^n) - C_0C_0^TF(U^n)D_0D_0^T + F(U^n)D_0D_0^T)\right) \\
    &= \Delta t (I-C_0C_0^T)F(U^n)(I-D_0D_0^T) \\
    &= \Delta t \sum_{\kappa=1}^N \big((I-C_0C_0^T)A_\kappa C_0\big) S_0 \big( (I-D_0D_0^T)B_\kappa D_0\big)^T \\
    &= \begin{bmatrix}\widetilde{A_1C_0} & \cdots & \widetilde{A_NC_0}\end{bmatrix}\text{diag}(\Delta tS_0,\ldots,\Delta tS_0)\begin{bmatrix}\widetilde{B_1D_0} & \cdots & \widetilde{B_ND_0}\end{bmatrix}^T,
\end{split}
\end{align}
where $\widetilde{A_\kappa C_0} = (I-C_0C_0^T)A_\kappa C_0$ and $\widetilde{B_\kappa D_0} = (I-D_0D_0^T)B_\kappa D_0$.  \eqref{eqn:R-rank-2} implies that $R$ has decomposition that is at most rank $Nr$.  The proof is complete.
\end{proof}
\begin{defn}\label{def:RLR}
We denote by $\RLR$ the low-rank representation of $R$ given by the decomposition in \eqref{eqn:R-rank-1} or \eqref{eqn:R-rank-2} and write $\RLR \sim R$.
\end{defn}
The rank bound from \Cref{prop:R-rank} is used in the estimator $\nu$ of $\|R\|_F$.  We use the $l+p$ singular values of $\tilde{R}$ to approximate the first $l+p$ singular values of $R$ and the smallest singular value of $\tilde{R}$ to approximate the remaining non-trivial singular values of $R$.  This gives 
\begin{align}\label{eqn:nu_ref}
    \|R\|^2_F \approx 
        \|\tilde R\|^2_F + \sum_{l+p+1}^{\theta(r)} \sigma^2_i(R) 
     \approx \|\tilde R\|_F  + \left[\theta(r) - (l +p)\right](\sigma^2_{l+p}\tilde R)  =: \nu^2
\end{align}
where $(\ell + p) \leq \theta(r)$ and
\begin{equation}\label{eqn:theta-def}
    \theta(r) = \begin{cases}
    (N+2)r &\text{ if \Cref{alg:uc} or \Cref{alg:proj} is used to compute $U_{\text P}^{n+1}$}, \\
    Nr &\text{ if \Cref{alg:tan} is used to compute $U_{\text P}^{n+1}$}.
    \end{cases}
\end{equation}
Accounting for the additional singular values in \eqref{eqn:nu_ref} gives a conservative estimate that favors accuracy over memory savings; a less conservative estimate would be to simply use $\|\tilde R\|_F$ to approximation $\|R\|_F$.
\subsubsection{Adding and Removing Rank}

Let $\tilde{R}=\tilde{C}\tilde{S}\tilde{D}^T$ be the low-rank approximation that is constructed by \Cref{alg:ra_svd}.  Rank is added or removed to the predictor $U_{\text P}^{n+1}$ based on the size of the estimator $\nu$, defined in \eqref{eqn:nu_ref}, relative to a prescribed tolerance $\tau > 0$.   There are three cases:

\underline{\textit{Case 1:}} $\nu\geq\tau$.  In this case, rank is added to the predictor by moving rank one components from $\tilde{R}$ to $U_{\text P}^{n+1}$ and updating the corrector:
\begin{alignat}{4}\label{eqn:resid_add}
    U^{n+1}_{(j)} &= U^{n+1}_{(j-1)} + \tilde{C}(:,j)\sigma_{j}(\tilde{R})\tilde{D}(:,j)^T, & \qquad \qquad U^{n+1}_{(0)} &= U_\text{P}^{n+1},\\
    \tilde{R}_{(j)} &= \tilde{R}_{(j-1)} - \tilde{C}(:,j)\sigma_{j}(\tilde{R})\tilde{D}(:,j)^T ,& \qquad \qquad \tilde{R}_{(0)} &= \tilde{R}, \\ 
    \nu_{(j)}^2 &= \nu_{(j-1)}^2 - \sigma_{j}^2(\tilde{R}), &\qquad  \qquad  \nu_{(0)}^2&= \nu^2 ,
\end{alignat}
for $j \in \{ 1, \dots, l\}$. These rank one updates are added until $\nu_{(j)} < \tau$ or until $j=l$ and $\nu_{(l)} \geq \tau$.  In the latter case the predictor is updated and a new residual is constructed:
\begin{equation}
     R \leftarrow R - \tilde{C}(:,1\!:\!l)\tilde{S}(1\!:\!l,1\!:\!l)\tilde{D}(:,1\!:\!l)^T, \qquad 
     U_{n+1}^{\rm{P}} \leftarrow U^{n+1}_{(j)}, \qquad
     \theta(r) \leftarrow \theta(r) - l,
\end{equation}
and the process repeats until the tolerance criterion in satisfied.  The final low-rank update is then culled to remove possibly redundant information introduced by the correction.

\underline{\textit{Case 2:}} $\nu\leq\delta\tau$ for some prescribed constant $\delta \in (0,1)$.  In this case, rank is removed by culling the predictor.  Let $S_1=C_S\hat{S}D_S^T$ be the SVD decomposition of $S_1\in\R^{r_1\times r_1}$.  Thus $U_{\rm{P}}^{n+1}=\hat{C}\hat{S}\hat{D}^T$ where $\hat{C}=C_1C_S$ and $\hat{D}=D_1D_S$.  The algorithm removes components corresponding to the smallest singular values of $S$ from $U_\text{P}^{n+1}$ and appends them to $R$. Specifically let
\begin{align}\label{eqn:nu_neg_def}
    \nu_{(-j)}^2 := \nu^2 + \sum_{\kappa=j+1}^r \hat{S}(\kappa,\kappa)^2
    \quad \text{and} \quad 
    j_* = \min \{j \in \mathbb{N} : 1\leq j\leq r_1 \text{ and } v_{(-j)} \geq \delta \tau \}
\end{align}
Then the updated approximation
\begin{align}
    U^{n+1} = \hat{C}(:,1\!:\!j_*)\hat{S}(1\!:\!j_*,1\!:\!j_*)\hat{D}(:,1\!:\!j_*)^T.
\end{align}
has rank $j_*$ and induces an error 
\begin{equation}
      \| \UFE -  U^{n+1} \| \leq  \| R \|_F + \| U_P^{n+1} - U^{n+1} \|_F \approx \nu^2 +  \sum_{\kappa=j+1}^r \hat{S}(\kappa,\kappa)^2 = \nu_{(-j_*)}.
\end{equation}
In general, $\nu_{(-j)}$ is usually a slight overestimation of the true error $\|R +  (U_P^{n+1} - U^{n+1})\|_F$
because of the conservative estimate of $\nu^2$ by the R-SVD algorithm and the fact that $R$ and $(U_P^{n+1} - U^{n+1})$ are not orthogonal in the Frobenius norm. However, if the projected unconventional integrator from \Cref{subsec:proj} is used, then $R$ and $(U_P^{n+1} - U^{n+1})$ are orthogonal and $\nu_{(-j)}$ is a much better representation of the error.  

\underline{\textit{Case 3:}} $\delta\tau<\nu<\tau$.  In this case, the predictor is considered a sufficiently accurate and efficient approximation.  Thus $U^{n+1} = U_{\rm{P}}^{n+1}$.

\begin{algorithm}[H] 
\SetKwInOut{Output}{Output~}
\SetKwInOut{Input}{Input~}
\DontPrintSemicolon
\caption{predictor-corrector Rank Adaptivity}\label{alg:resid_adapt}
\KwIn{$l\in\N$ \tcp*{Approximation rank for $R$}}
\KwIn{$p\in\N$ \tcp*{Oversampling parameter}}
\KwIn{$\tau>0$ \tcp*{Tolerance}}
\KwIn{$\tau_{\text{cull}}>0$ \tcp*{Redundant Culling tolerance}}
\KwIn{$0<\delta<1$ \tcp*{Rank removal factor}}
\KwIn{$C_0\in\R^{m\times r},S_0\in\R^{r\times r},D_0\in\R^{n\times r}$ \tcp*{$U^n = C_0S_0D_0^T$}}
\KwOut{$C_1\in\R^{m\times r_1},S_1\in\R^{r_1\times r_1},D_1\in\R^{n\times r_1}$ \tcp*{$U^{n+1} = C_1S_1D_1^T$}}
\BlankLine
Create $C_1\in\R^{m\times r_1},S_1\in\R^{r_1\times r_1},D_1\in\R^{n\times r_1}$ \newline 
using Algorithm \ref{alg:uc}, \ref{alg:tan}, or \ref{alg:proj} \tcp*{$U_{\text P}^{n+1} = C_1S_1D_1^T$}
Construct $\RLR \sim R=\UFE-C_1S_1D_1^T$ \tcp*{see \Cref{def:RLR}}
$[\tilde{C},\tilde{S},\tilde{D}]=\textsf{r-svd}(l,p,\RLR)$ using  \Cref{alg:ra_svd} \tcp*{$\tilde{R} = \tilde{C}\tilde{S}\tilde{D}^T$}
$\nu=\sqrt{\sum_{i=1}^{l+p}\tilde{S}_{i,i}^2+(\theta(r)-(l+p))\tilde{S}_{l+p,l+p}^2}$ \tcp*{$\theta(r)$ from \eqref{eqn:theta-def}}
\uIf{$\nu \geq \tau$}{
$c = 0$\;
\While{$\nu \geq \tau$}{
    Find minimum $1\leq j\leq l$ such that $\nu_{(j)}<\tau$ where 
    $\nu_{(j)}=\sqrt{\sum_{i=1}^{l+p}\tilde{S}_{i,i}^2+(\theta(r)-(l+p+c+j))\tilde{S}_{l+p,l+p}^2}$\;
    \uIf{no such $j$ exists}{
        $C_1 = \begin{bmatrix}C_1 &\tilde{C}(:,1\!:\!l)\end{bmatrix}$, $D_1 = \begin{bmatrix}D_1 &\tilde{D}(:,1\!:\!l)\end{bmatrix}$; $S_1 = \text{diag}(S_1,\tilde{S}(1\!:\!l,1\!:\!l))$\;
        Construct $\RLR \sim R = \UFE-C_1S_1D_1^T$ \;
        $[\tilde{C},\tilde{S},\tilde{D}]=\textsf{r-svd}(l,p,\RLR)$ using \Cref{alg:ra_svd}\;
        $c = c + l$\;
        $\nu=\sqrt{\sum_{i=1}^{l+p}\tilde{S}_{i,i}^2+(\theta(r)-(l+p+c))\tilde{S}_{l+p,l+p}^2}$
    }
    \Else{
        $C_1 = \begin{bmatrix}C_1&\tilde{C}(:,1\!:\!j)\end{bmatrix}$, $D_1 = \begin{bmatrix}D_1&\tilde{D}(:,1\!:\!j)\end{bmatrix}$; $S_1 = \text{diag}(S_1,\tilde{S}(1\!:\!j,1\!:\!j))$
    }
}
    $[C_1,R_C] = \textsf{qr}(C_1); [D_1,R_D] = \textsf{qr}(D_1)$\;
    $S_1 = R_CS_1R_D^T$\;
    $[S_C,S_1,S_D] = \textsf{svd}(S_1)$; $r_1 = \text{size}(S_1)$ \tcp*{$\textrm{size}(S_1)$ is \# of columns of $S_1$}
    $r_1 = \min\{j\in\N:1\leq j\leq r_1 \text{ and } \sigma_j(S_1) < \tau_{\text{cull}}\}$\;
    $C_1 = C_1C_S(:,1\!:\!r_1)$, $D_1 = D_1D_S(:,1\!:\!r_1)$, $S_1 = S_1(1\!:\!r_1,1\!:\!r_1)$\;
}
\uElseIf{$\nu\leq \delta\tau$}{
    $[C_S,S_1,D_S] = \textsf{svd}(S_1)$\;
    Find maximum $1\leq j\leq r$ such that $\nu^2+\sum_{\kappa=j+1}^r\sigma_{\kappa}(S_1)^2> (\delta\tau)^2$\;
    $C_1 = C_1C_S(:,1\!:\!j)$, $D_1 = D_1D_S(:,1\!:\!j)$\;
    $S_1 = S_1(1\!:\!j,1\!:\!j)$
}
\Else{
    \Continue
}
\end{algorithm}

\subsection{Memory Comparisons}
In this subsection, we give a comparison of the memory requirements to run each of the adaptive algorithms presented above:
the \textit{rank-adaptive unconventional integrator} (RAUC) algorithm, the \textit{rank-adaptive step truncation} (RAST) algorithm and the new (RAPC) \textit{rank-adaptive predictor-corrector} algorithm.  Each of these algorithms primarily involve function evaluations which have been made memory efficient by \Cref{prop:matrix_evals} or modern QR and SVD decompositions that are also memory efficient.  Thus we gauge the memory requirements of each adaptive algorithm by the max of the sum of ranks of all objects held in memory at one time.   As an example, for the unconventional integrator (\Cref{alg:uc}), the rank-$r$ matrix $U^n$ must be stored at the same time as the rank-$r$ matrix $C_1D_1^T$; thus we list the memory requirements as $r+r=2r$.  Using this metric, \Cref{tab:mem_comp} shows the results below.

\begin{table}[h!t]
    \centering
    \begin{tabular}{c|c|c|c}\hline
       Method & Section & Algorithm & Memory  \\ \hline
        RAUC \cite{ceruti2022rank} & \Cref{subsect:lub_adapt} & \Cref{alg:ra_uc} & $2r$ \\ 
        RAST \cite{guo2022low}& \Cref{subsect:fra} &\Cref{alg:fra} & $(N+1)r$ \\
        RAPC & \Cref{subsect:rara} & \Cref{alg:resid_adapt} & $r+\max\{r_1,r\}+l+p$ \\ \hline
    \end{tabular}
    \caption{Memory comparisons of the rank-adaptive algorithms list in \Cref{sect:adapt}.  Here $r$ is the rank of $U^n$, $F$ is $N$-separable, $r_1$ is the final rank of the $U^{n+1}$, and the parameters $l$ and $p$ are given in \Cref{alg:ra_svd}. }
    \label{tab:mem_comp}
\end{table}

The RAUC algorithm is the most memory efficient; this is expected, as it reuses information obtained through the DLRA process.  However, because of this convergence is entirely dependent on the modeling error introduced from the DLRA approximation.  The RAST algorithm uses the most memory per run due to storing the full-rank update in a low memory fashion; this allows for accurate approximations to be guaranteed, but use excessive memory in cases were DLRA methods work well.  The RAPC algorithm sits in between the these two.  The maximum memory is not known a-priori in that it is dependent on the tolerance given and the oversampling parameter $p$ used in the R-SVD, but by using the SVD to obtain new basis vectors for $U^{n+1}$ the algorithm seeks to prioritize memory as much as possible.

\section{Numerical Results}\label{sect:results}

In this section we compare the three adaptive strategies listed in \Cref{tab:mem_comp}:  the rank-adaptive unconventional integrator (RAUC) algorithm, the rank-adaptive step truncation (RAST) algorithm and the new (RAPC) rank-adaptive predictor
corrector algorithm.  The RAPC algorithm is applied to all the three DLRA integrators in \Cref{sect:DLRA}: the unconventional integrator (RAPC-UC) in \Cref{subsec:uc}, the tangent projector integrator (RAPC-Tan) in \Cref{subsec:tan}, and the projected unconventional integrator (RAPC-Proj) in \Cref{subsec:proj}, for a total of five distinct methods.

We apply the five methods to three test problems.  The first problem is the 2D solid-body rotation problem from \eqref{eqn:num_prob}; these results are presented in \Cref{test:solid_body}.  A large fraction of the results are dedicated this problem, which assess the ability of the adaptive methods to match the rank of the true solution as it oscillates in time. The second and third test are modifications of solid-body rotation problem.  The problem in \Cref{subsect:rad} introduces a relaxation problem common to radiation transport and gauges how the rank-adaptive methods perform when the rank does not change after a short time.  The problem in \Cref{subsect:sources} also contains a relaxation along with localized sources.   

The RAUC and RAST algorithms require only a tolerance $\tau$ to run while all of the RAPC algorithms require a tolerance $\tau$, a culling fraction $\delta$, and a redundant culling parameter $\tau_{\rm{cull}}$. In our tests RAPC-Tan is much more apt to decrease rank than the other two RAPC algorithms, so we choose $\delta = 0.5$ for RAPC-Tan and $\delta = 0.7$ for RAPC-UC and RAPC-Proj to make up for this difference.  In all tests we set $\tau_{\rm{cull}}=10^{-14}$.   For the randomized SVD in \Cref{alg:resid_adapt}, we use $l=3$ in all cases, but the oversampling parameter $p$ is problem-specific.  All of the algorithms are implemented and run using MATLAB \cite{MATLAB:2020}.   Random vectors are drawn using MATLAB's built-in $\texttt{randn}$ function.  Finally $\W=[-1,1]^2$ is used for all problems and we set the basis $\{\phi_j(x)\}$ for $V_{x,h}$ to be the shifted localized Legendre polynomials on every interval $T\in\mT_{x,h}$ that are scaled to be orthonormal in $L^2(\W_x)$.  The same basis is used for $V_{y,h}$.

\subsection{Test Problem 1 - 2D solid-body Rotation}\label{test:solid_body}

The equation for solid-body is given in \eqref{eqn:pde} with initial condition 
\begin{equation}\label{eqn:solid_body_box_IC}
    u_0(x,y) = \chi_{\{|x|<0.5\}}(x)\chi_{\{|y|<0.5\}}(y),
\end{equation}
where $\chi_E$ is the indicator function on a set $E$.  The DG discretization and low-rank structure for this problem is discussed extensively in \Cref{subsect:solid-body} and \Cref{subsect:lr_features}. We choose a uniform discretization with $h_x=h_y=1/128$ and $k_x=k_y=0$ so that the degrees of freedom in each variable is 256.  The oversampling parameter $p$ is set to 7. 

Because $u_0$ is separable, the $L^2$ projection of $u_0$ onto $V_h$ has a rank of one.  However, as the solution evolves via a counter-clockwise rotation, the numerical rank  of the true solution oscillates with its numerical rank, i.e., the total number of singular values above some tolerance, increasing and decreasing back to rank 1 at $t=\pi/2$. \Cref{fig:full_rank_plot} shows the numerical rank of the discrete full-rank forward Euler update as a function of time.   From $t=\pi/2$ to $t=\pi$, the rank does not increase as much as the first $90$ degree rotation.  This is because the numerical dissipation in the system smooths out the solution and forces the singular values to decay faster.  In order to measure rates of convergence in time, we set $\omega=1/256$ to be our ``base timestep'' and take $\Delta t$ to be some fraction of $\omega$.  

\begin{figure}
    \centering
    \begin{subfigure}[t]{0.45\textwidth}
    \includegraphics[width=0.9\textwidth]{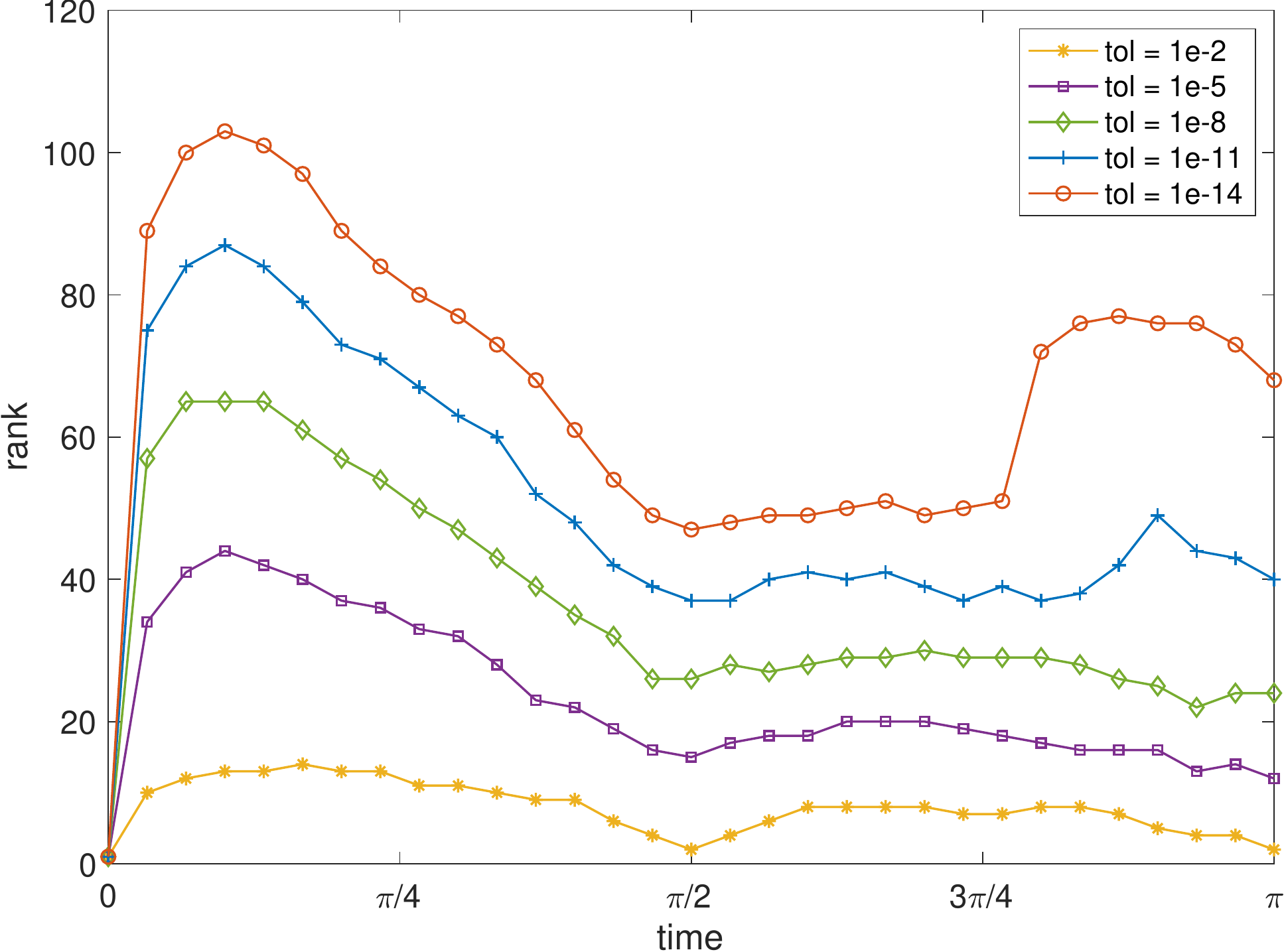}
    \caption{Numerical rank of the forward Euler full-rank iteration of \eqref{eqn:num_prob} with $\Delta t=1/4096$ as a function of time.  Given a tolerance \texttt{tol}, we define the numerical rank of the discrete function by the number of singular values larger than \texttt{tol}.  
    }
    \label{fig:full_rank_plot}
    \end{subfigure}\hfill
    \begin{subfigure}[t]{0.45\textwidth}
    \includegraphics[width=0.9\textwidth]{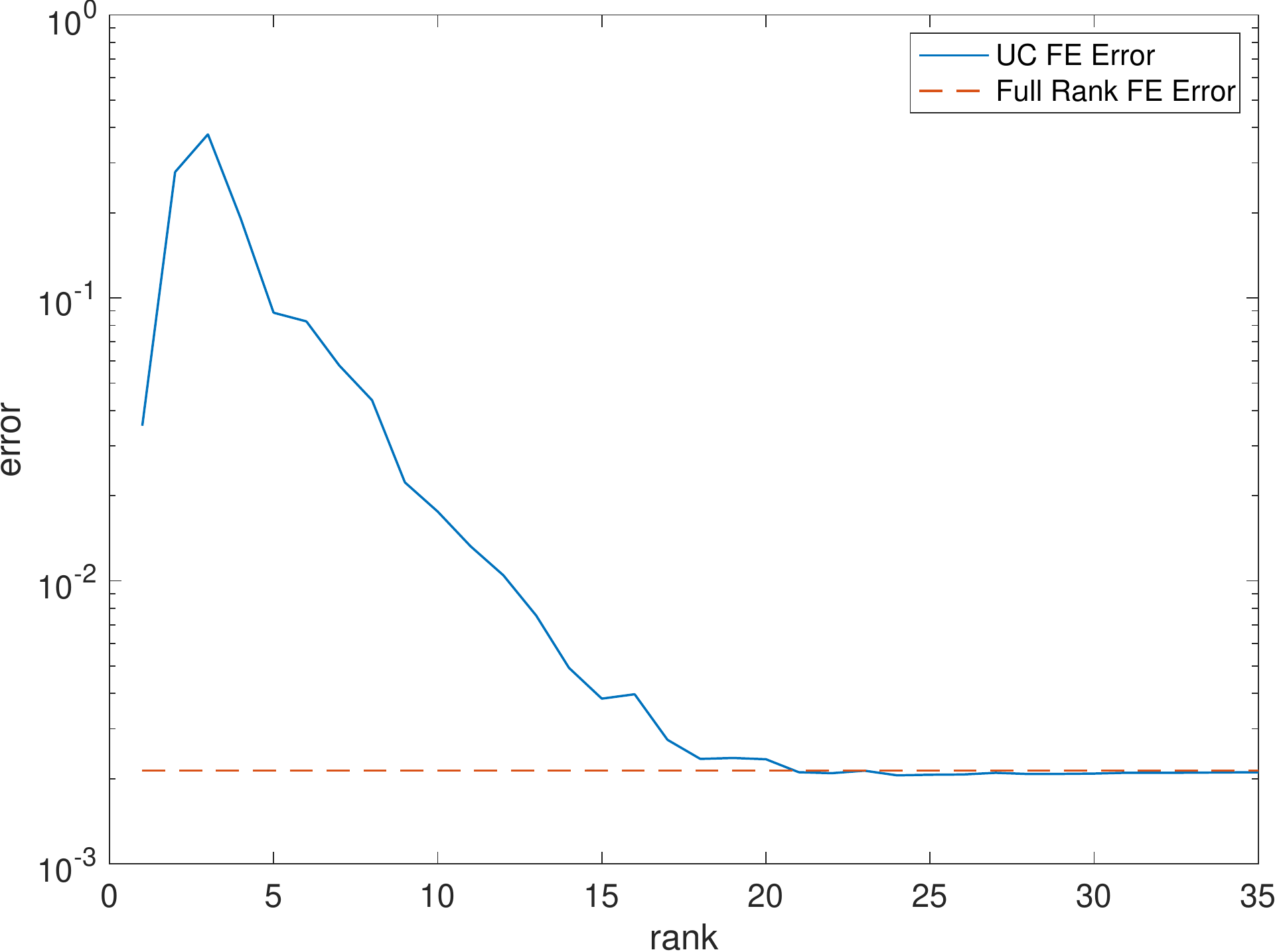}
    \caption{Errors of the unconventional integrator with forward Euler timestepping (see \Cref{alg:uc}) applied to \eqref{eqn:num_prob} at time $T=\pi$ as a function of the initial rank-$r$.  The errors are created by comparison against the full-rank SSP-RK3 approximation with $\Delta t=1/4096$ for all runs.}
    \label{fig:uc_rank_plot}
    \end{subfigure}
    \caption{}
\end{figure} 

To gauge how much rank is expected to resolve the discrete solution up to timestepping error, we plot the error of a rank-$r$ unconventional integrator at time $t=\pi$ against the full-rank SSP-RK3 integrator.  The initial condition in this rank is a rank-$r$ truncation of the SVD of the projected initial condition.  One important detail to observe is that that all of the DLRA integrators in \Cref{sect:DLRA} are dependent on the low-rank components $C,S,D$ rather than the product $CSD^T$; because of this, even though the second through $r$-th singular values are near machine epsilon, all of the singular vectors are equally weighted in creating the updated basis vectors in time (see \Cref{alg:uc}).  Thus the modelling error is dependent on the initial basis vectors $C_0$ and $D_0$ of $U^n$ -- even if that information is essentially not used to construct $U^n$ when the product $U^n=C_0S_0D_0^T$ is formed.   This allows DLRA integrators to perform better with a larger starting rank and can be seen in \Cref{fig:uc_rank_plot}.  As the initial rank grows, the errors decrease until the error is saturated by the forward Euler timestepping error.  The dotted line in \Cref{fig:uc_rank_plot} is the full-rank forward Euler error -- $2.138\times 10^{-3}$ whose error is only dependent on the temporal discretization.  An initial rank of at least 21 is needed to remove the modelling error and produce a DLRA solution that is as good as a full rank solution. 

\subsubsection{Isolating Modelling Error}\label{subsect:test1}

\begin{figure}[ht]
    \centering
    \includegraphics[width=0.49\textwidth]{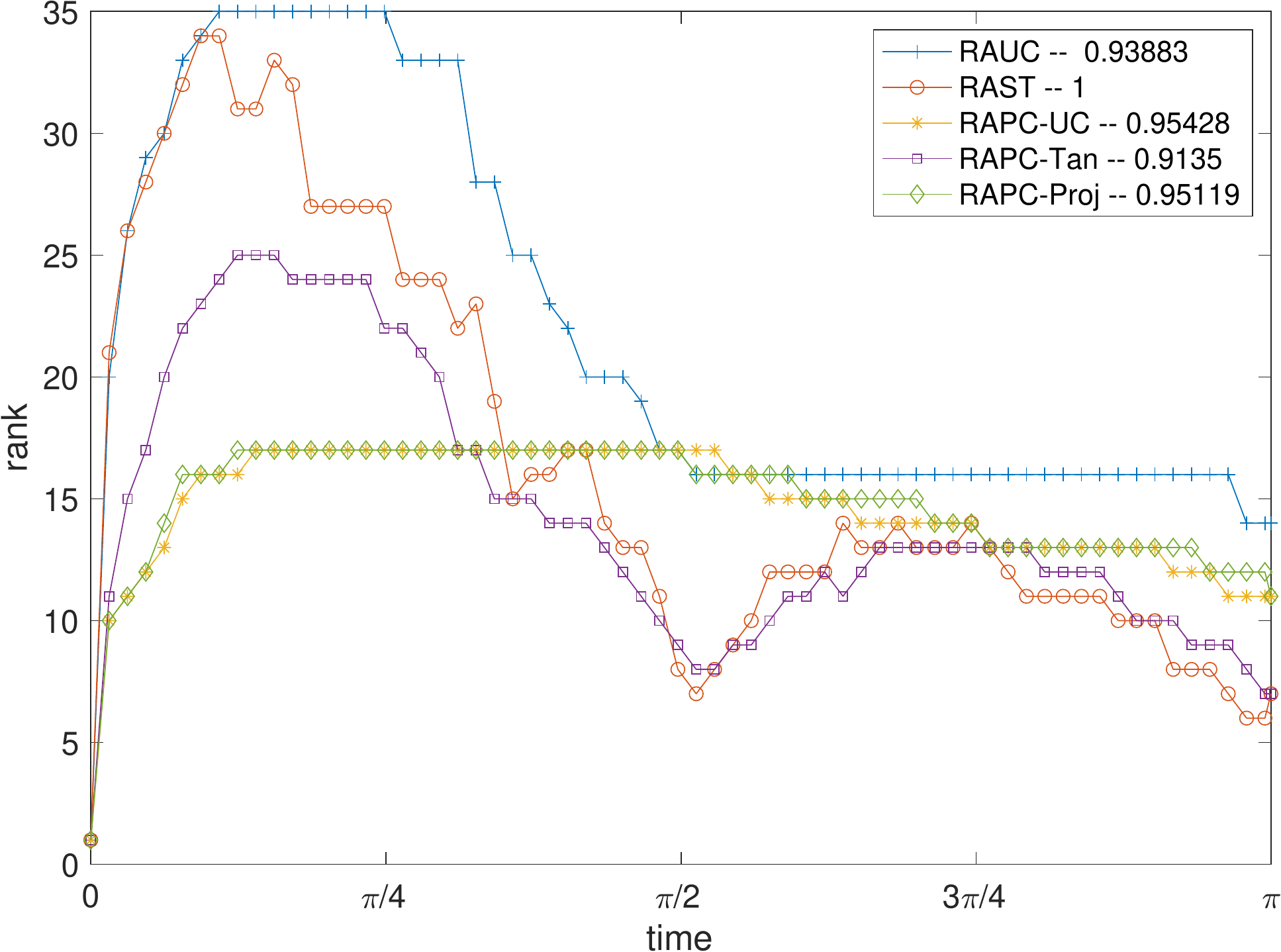}
    \includegraphics[width=0.49\textwidth]{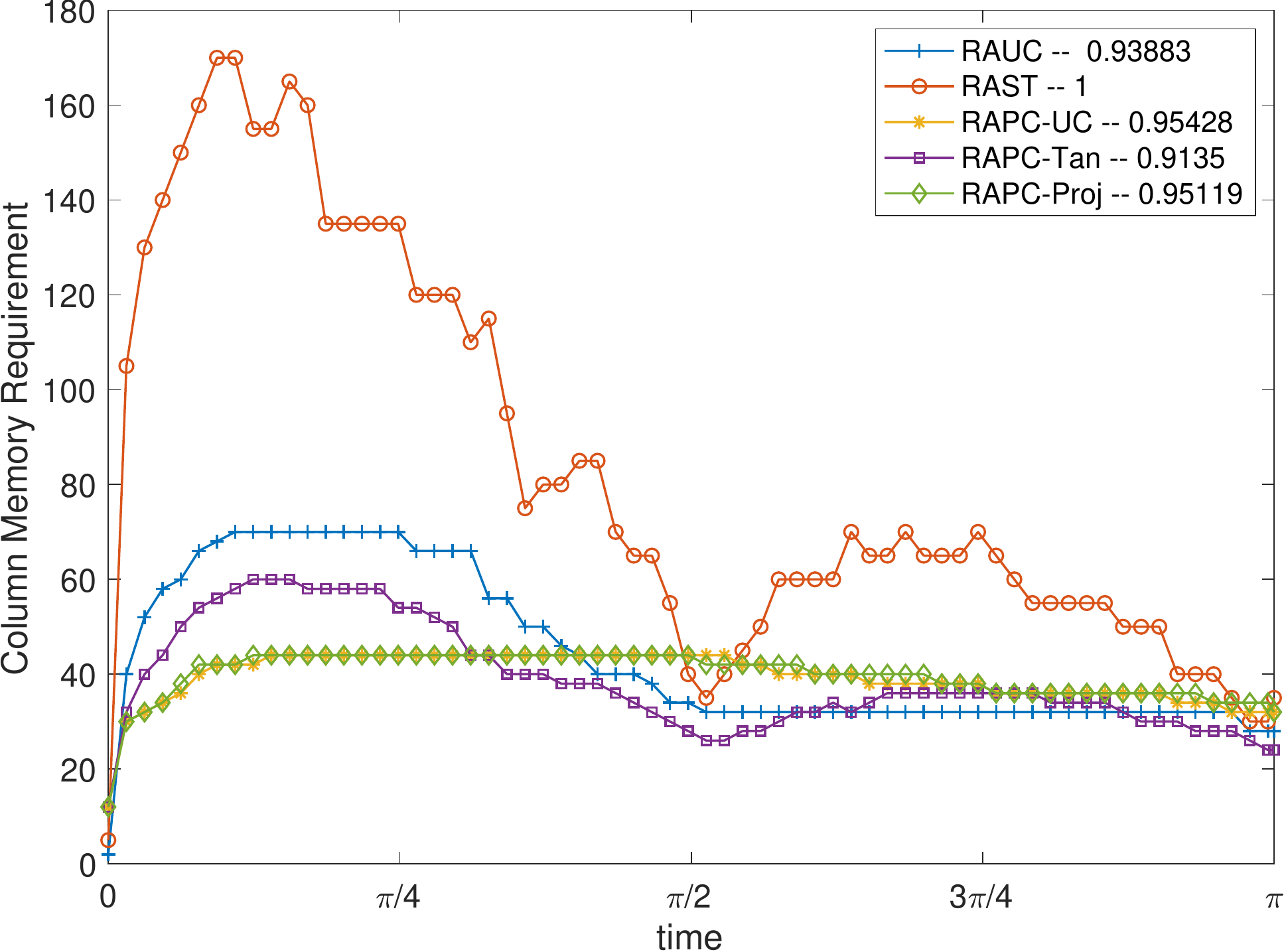}
    \caption{
    Rank (left) and memory requirements (right) of each of the five methods over time for the 2D solid-body rotatin problem (\Cref{test:solid_body}) with $\Delta t=\omega/16$ where $\omega=1/256$. The tolerance $\tau$ for each method is set so that the error at time $T=\pi$ against the full-rank SSP-RK3 method is near $3\times 10^{-3}$ and can be found in \Cref{tab:tol_test1}, and the memory footprint is calculated using \Cref{tab:mem_comp}. For readability, the data is plotted at every 200 timesteps.  The legend labels provide the error of the method, normalized with respect to the error of the RAST algorithm.
    }
    \label{fig:test_1_fig_1}
\end{figure}

\begin{table}[h]
    \centering
    \begin{tabular}{c|c c c c c} \hline
 Method & RAUC & RAST & RAPC-UC & RAPC-Tan & RAPC-Proj \\ \hline
 $\tau$ & $25\Delta t^2$ & $19\Delta t^2$ & $880\Delta t^2$ & $11255\Delta t^2$ & $857\Delta t^2$ \\ 
 & 1.4901e-6 & 1.1325e-6 & 5.2452e-5 & 6.7085e-4 & 5.1081e-5 \\ \hline
Error & 3.039e-3 & 3.237e-3 & 3.089e-3 & 2.957e-3 & 3.079e-3 \\ \hhline{=|=====}
$\tau$ & $\Delta t^2$ & $5\Delta t^2$ & $400\Delta t^2$ & $500\Delta t^2$ & $200\Delta t^2$ \\
& 5.9605e-8 & 2.9802e-7 & 2.3842e-5 & 2.9802e-5 & 1.1921e-5 \\ \hline
Error & 2.592e-3 & 2.108e-3 & 2.1348e-3 & 2.1414e-3 & 2.1642e-3 \\ \hline
\end{tabular}
    \caption{
    Tolerances and Errors for all five rank-adaptive methods in \Cref{subsect:test1}.  The errors are computed at $T=\pi$ against the full-rank SSP-RK3 discretization. $\Delta t = \omega/16$ where $\omega=1/256$.
    }
    \label{tab:tol_test1}
\end{table}

For the first test we wish to compare the rank of the five methods when modelling error is the dominant source of error in the problem.  In this test each of the low-rank solutions are less accurate than the full rank solution.
We set $\Delta t=\omega/16$ and run each method with a tolerance $\tau$ to achieve an error as close to $3\times 10^{-3}$ when compared against the SSP-RK3 approximation.  The tolerances are listed in \Cref{tab:tol_test1} and were chosen by trial and error.  The error of the full-rank forward Euler iteration against the SSP-RK3 iteration is $2.138\times 10^{-3}$ so our choice of $3\times 10^{-3}$ is to judge the modelling error by the low-rank approximation rather than the temporal discretization error.  The errors for each method are given in \Cref{tab:tol_test1}, and  
\Cref{fig:test_1_fig_1} shows the rank of each of the five methods as a function of time.  The maximum rank of each of the methods far less than the full-rank forward Euler update (\Cref{fig:full_rank_plot}).  
However, each of the RAPC methods do much better than other two methods. 
Both the RAPC-UC and RAPC-Proj methods need about half of the rank of the RAUC implementation in order to achieve the same error.  RAPC-UC and RAPC-Proj have a maximum rank of $r=17$ and an error of $3.039\times 10^{-3}$ (see \Cref{tab:tol_test1}).  From \Cref{fig:uc_rank_plot}, the error of $3.039\times 10^{-3}$ fits nicely between errors of the fixed-rank unconventional integrator for the rank-16 and rank-17 runs.  
The RAPC-UC and RAPC-Proj solutions capture the same accuracy as a fixed rank-17 unconventional integrator and these methods only keep a rank 17 solution for less than half of the simulation.   For a majority of the time, the RAPC-UC and RAPC-Proj solutions are below a rank of 17.

The RAST and RAPC-Tan methods best capture the temporal oscillations in rank while the rank of the methods based on the unconventional integrator (RAUC, RAPC-UC, and RAPC-Proj) appear to flatten out over the second $90^\circ$ rotation.  
\subsubsection{Resolving Modelling Error}\label{subsect:test2}

\begin{figure}[ht]
    \centering
    \includegraphics[width=0.49\textwidth]{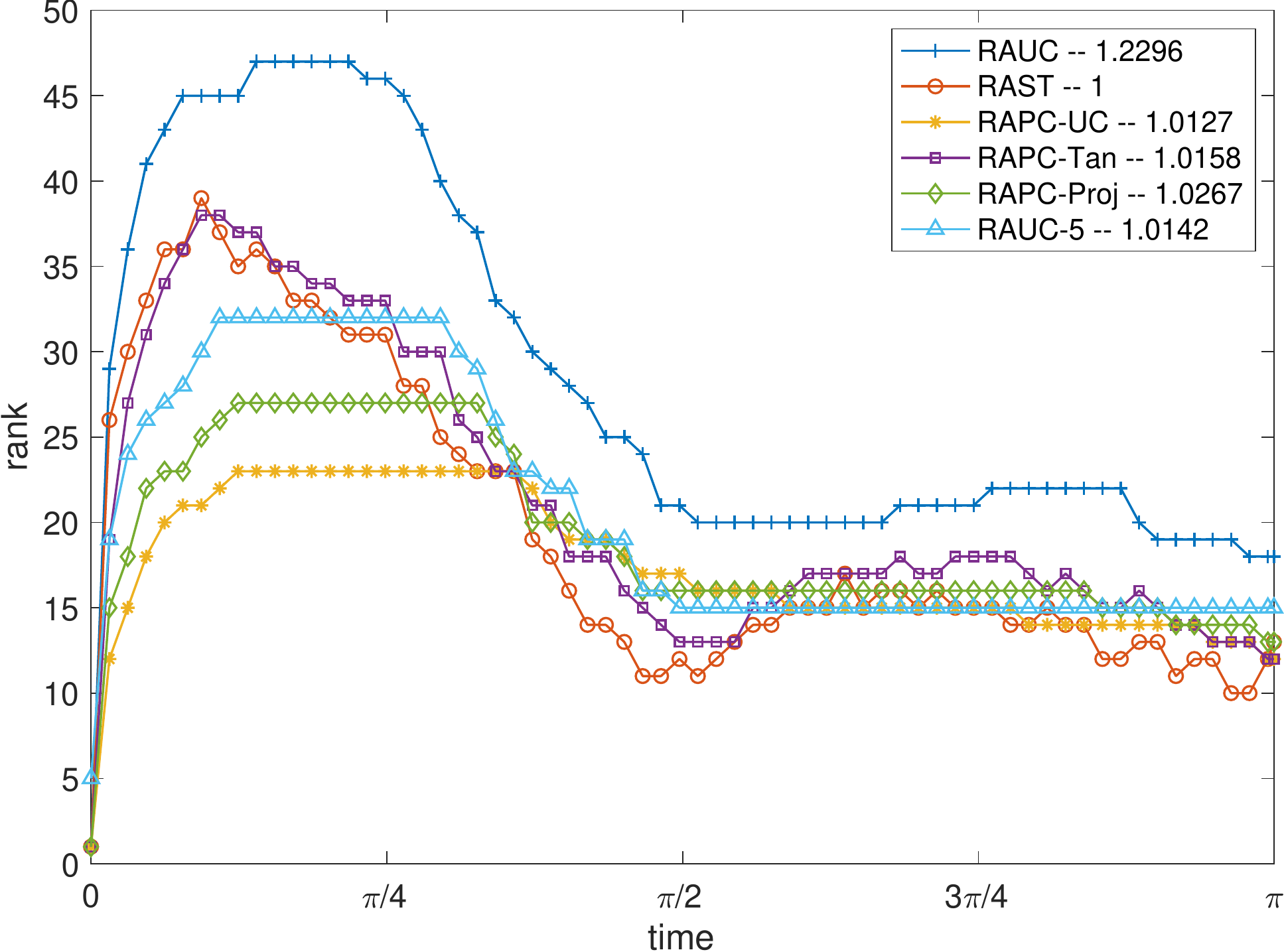}
    \includegraphics[width=0.49\textwidth]{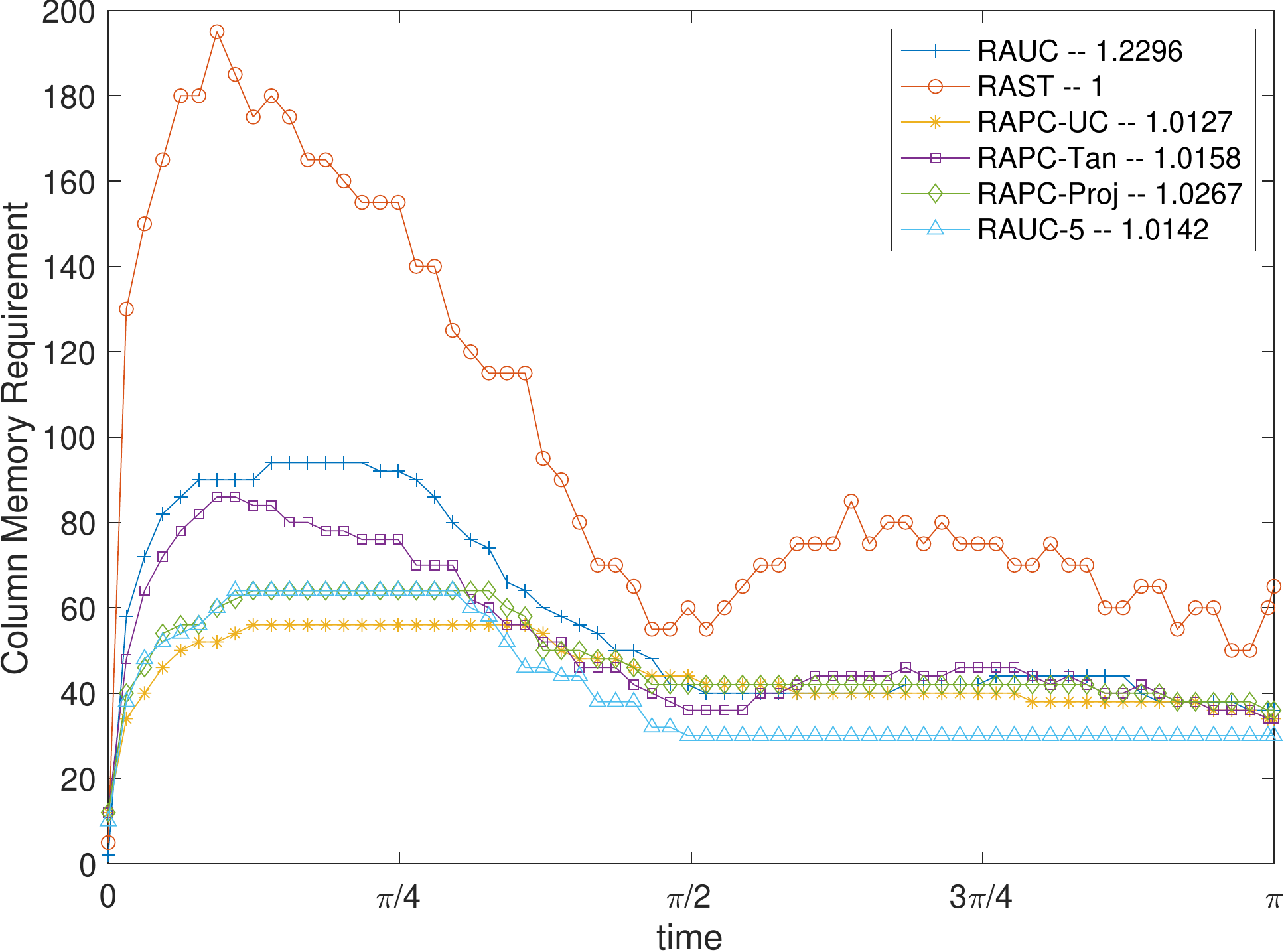}
    \caption{Rank (left) and memory requirements (right) of each of the five methods over time for the 2D solid-body rotation problem (\Cref{test:solid_body}) with $\Delta t=\omega/16$ where $\omega=1/256$. The tolerance $\tau$ for each method was set so that the error at time $T=\pi$ against the full-rank SSP-RK3 method was near $3\times 10^{-3}$ and can be found in \Cref{tab:tol_test1}, and the memory footprint was calculated using \Cref{tab:mem_comp}.   For readability, the data is plotted at every 200 timesteps.  The legend labels provide the error of the method, normalized with respect to the error of the RAST algorithm.
    }
    \label{fig:test_1_fig_2}
\end{figure}

We now run the same test but tune to the largest tolerances, again by trial and error, so that the error of each run is as close to the temporal discretization error, $2.138\times 10^{-3}$, as possible.  Thereby the modelling error is resolved meaning that each of the low-rank solutions is a good of an approximation as the full rank solution.  The rank of the runs are shown in \Cref{fig:test_1_fig_2}.   Similar to the previous run, the RAPC-UC and RAPC-Proj perform the best with the RAPC-UC needing at maximum a rank of 23 for approximately a quarter of the run.  \Cref{fig:uc_rank_plot} shows that this is close to the rank of 21 that was required to capture this error in the non rank-adaptive case.  Also after $t=3\pi/8$, a rank of 23 is not required to resolve the modelling error.  

The RAPC-Tan and RAST integrators perform similarly in rank, but the RAPC-Tan integrator requires storage of only $2r+10$ vectors in each direction (see \Cref{tab:mem_comp}) while the RAST requires $5r$ vectors in each direction.  When $r=38$, this difference is quite large as evidenced in the memory plot of \Cref{fig:test_1_fig_2}; thus the RAPC-Tan integrator is much more memory efficient than the RAST integrator.  RAST is practical when the rank of the solution is guaranteed to be small; indeed, the authors in \cite{guo2022low} propose a different discretization of \eqref{eqn:pde} where the characteristics are instead approximated which leaves the numerical solution low-rank for all time.  

Finally, the rank-adaptive unconventional integrator \cite{ceruti2022rank} (RAUC) described in \Cref{subsect:lub_adapt} was not able to capture the required error for any reasonable rank -- any tighter tolerances did not produce a more accurate result. This is because the method can only gain information from the current and updated basis created by the unconventional integrator.  If this space is not rich enough, then the method cannot resolve the modelling error.  After the first timestep, the numerical rank of the full-rank forward Euler update is 5 and the other four methods correctly have a rank of 5.  However, because the initial condition is only rank 1, the RAUC integrator can only have a maximum of two basis vectors in the updated components $C$ and $D$.  Therefore there is no way to resolve the modelling error.  A simply remedy is to increase the rank of the initial condition to five (using four additional singular values are near machine epsilon, with corresponding singular vectors that result from MATLAB's \texttt{svd} algorithm).  In this case, the algorithm, denoted as RAUC-5 in \Cref{fig:test_1_fig_2}, with $\tau = 50\Delta t^2$ resolves the modelling error with an error of $2.138\times 10^{-3}$.  While the rank of RAUC-5 ($r=32$) is larger than the RAPC-Proj run ($r=27$), the memory requirements are the same because the of the additional memory overhead of the R-SVD corrector estimation call in the RAPC algorithm.  We note that this fix to the unconventional integrator is not guaranteed to work and shows that the performance of the DLRA integrators is highly dependent on the initial basis chosen and not on the initial condition itself.

Similar to the last test, RAPC-UC algorithm is the most memory efficient in the sense that the maximum memory used by RAPC-UC is the least of the five runs. 

\subsubsection{Rates of Convergence}\label{subsect:test_rates}

Here we test that the RAPC methods and the other adaptive low-rank methods are able to recover an $\mathcal{O}(\Delta t)$ method.  This is achieved by setting $\tau = \mathcal{O}(\Delta t^2)$ in order to guarantee that the update is second-order accurate locally and first-order accurate globally.  The results are given in \Cref{fig:rates} and they show setting $\tau=\mathcal{O}(\Delta t^2)$ is sufficient to achieve a second order method.  Tests not included show that a tolerance of order $\Delta t$ does not yield a globally first order accurate method.

\begin{figure}[ht]
    \centering
    \includegraphics[width=0.49\textwidth]{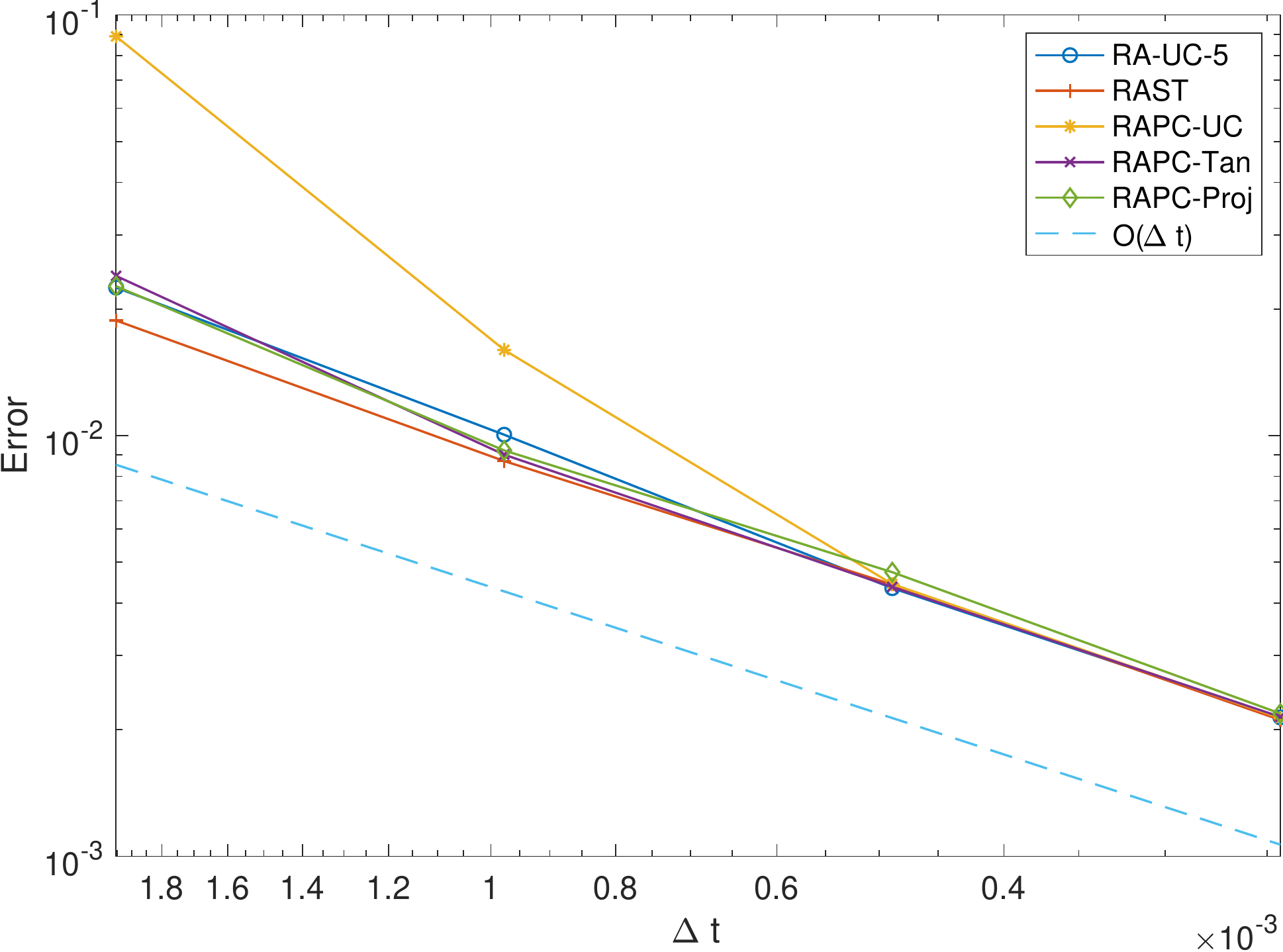}
    \caption{Rate of convergence of each low-rank method the 2D solid-body rotation problem (\Cref{test:solid_body}). $\Delta t = \omega/2^k$ where $k=1,\ldots,4$ and the tolerances are shown in Row 3 of \cref{tab:tol_test1}.  For RAUC-5, we set $\tau = 50\Delta t^2$. }
    \label{fig:rates}
\end{figure}

\subsubsection{Statistical Analysis}\label{subsect:stat_err}

The purpose of this test is to provide statistical information about how variations in the randomized SVD (\Cref{alg:ra_svd}) affect the error of the RAPC algorithm.  Fixing a tolerance, we run \Cref{alg:resid_adapt} with $\Delta t=\omega/16$ from $t=0$ to $T=\pi$ for 50 runs and measure the error between the RAPC-UC, RAPC-Tan, and RAPC-Proj solution and the full-rank SSP-RK3 approximation.  We then compute the sample mean and variance by the standard formulae:
\begin{equation}
    \mu = \frac{1}{N}\sum_{i=1}^{N}X_i \quad\text{ and }\quad \sigma^2 = \frac{1}{N-1} \sum_{i=1}^{N} (X_i-\mu)^2,
\end{equation}
where $\{X_i\}_{i=1}^N$ is the collection of random samples and $N=50$.

\begin{table}[h]
    \centering
    \begin{tabular}{c|c c c c c} \hline
 Method  & RAPC-UC & RAPC-Tan & RAPC-Proj \\ \hline
 $\tau$ & $880\Delta t^2$ & $11255\Delta t^2$ & $857\Delta t^2$ \\ 
 & 5.2452e-5 & 6.7085e-4 & 5.1081e-5 \\ \hline
$\mu$ &  2.9862e-3 & 2.9644e-3 & 3.052e-3 \\ \hline
$\sigma$ & 6.449e-5 & 1.594e-5 & 9.214e-5 \\ \hhline{=|===}
$\tau$ & $400\Delta t^2$ & $500\Delta t^2$ & $200\Delta t^2$ \\
& 2.3842e-5 & 2.9802e-5 & 1.1921e-5 \\ \hline
$\mu$ & 2.1328e-3 & 2.1423e-3 & 2.1967e-3 \\ \hline
$\sigma$ & 5.563-6 & 9.191e-7 & 2.148e-5 \\ \hline
\end{tabular}
    \caption{
    Mean and standard deviations of 50 samples of the error of the Residual based rank-adaptive methods \Cref{subsect:rara} with all three DLRA integtors in \Cref{sect:DLRA} measured against the standard full-rank SSP-RK3 approximation.  The final time is $T=\pi$ and $\Delta t = \omega/16$. 
    }
    \label{tab:rand_test3}
\end{table}

\Cref{tab:rand_test3} shows the average error and the standard deviation of the runs.  The standard deviation of the runs is at least an order of magnitude from the mean-error.  Thus the randomized algorithm SVD has little impact on the approximation of the solution.

\subsection{Test Problem 2 - Advection with Relaxation}\label{subsect:rad}

We consider an equation with advection and relaxation that is often used to model time-dependent radiation transport. Following, \cite{hauck2009temporal}, the relaxation coefficient is allowed to be spatially dependent and discontinuous. Let $\W_x=\W_y=(-1,1)$, and consider the following initial value problem:
\begin{subequations}
\label{eqn:rad_trans_pde}
\begin{align}
    \frac{\partial u}{\partial t} + \frac{1}{\e}y \cdot \grad_x u + \frac{\sigma(x)}{\e^2}(u-Pu) &= 0 \\
    u(x,y,0) = u_0(x,y) &= \rho_0(x).
\end{align}
\end{subequations}
where $\e>0$, $Pu = \frac{1}{2}\int_{\W_y} u(x,y) \dx[y]$, and 
\begin{align}
    \sigma(x) = 
    \begin{cases}
        0.02 &\text{ if }x\in (-0.75,-0.25)\cup(0.25,0.75) \\
        1    &\text{ if }x\in [1,-0.75]\cup[-0.25,0.25]\cup[0.75,1]
    \end{cases}
\end{align}
We equip \eqref{eqn:rad_trans_pde} with zero-inflow boundary conditions and initial data $\rho_0(x) = \chi_{E}(x)$ where $E=(-0.2,0.2)$, and set $\e=1/5$.

\subsubsection{Matrix Representation}

With upwind numerical fluxes, the DG discretization of \eqref{eqn:rad_trans_pde} gives the following ODE for $u_h(t)\in V_h$:
\begin{align}\label{eqn:rad_trans_method}
\begin{split}
    \left(\tfrac{\partial u_h}{\partial t},q_h\right)_\W &- \tfrac{1}{\e}(yu_h,\grad_x q_h)_\W + \left<y\lavg u_h\ravg +\tfrac{|y\cdot n|}{2}\ljmp u_h\rjmp,
    \ljmp q_h\rjmp\right>_{\mE_x^\mathrm{I}\times\W_y} + \left<y u_h ,
    n q_h\right>_{\{y\cdot n(x) > 0\}}  \\
    &+ \tfrac{1}{\e^2}(\sigma u_h,q_h)_\W-\tfrac{1}{\e^2}(\tfrac{\sigma}{2}(u_h,1)_{\W_y},q_h)_\W = 0
\end{split}
\end{align}
for all $q_h\in V_h$. Using the notation of \Cref{sect:mateval}, we may rewrite \eqref{eqn:rad_trans_method} into a matrix ODE
\begin{equation}
\frac{\partial U_h}{\partial t} = F(U_h)
\end{equation}
where $F$ has three terms: two for the advection operator and one for the relaxation operator $u-Pu$.  Rewriting the advection operator is similar to the process in \Cref{subsect:lr_features}, while the relaxation operator can be expressed with one term.  Given the bases $\{\phi_i(x)\}_{i=1}^m$ and $\{\psi_j(y)\}_{j=1}^n$ for $V_{x,h}$ and $V_{y,h}$, let $\overline{\zeta}\in\R^{n\times 1}$ be the coefficient representation of the function $\zeta(y) = 1$ with respect to $\{\psi_j(y)\}_{j=1}^n$.  Then 
\begin{equation}
    (\sigma u_h,q_h)_\W-(\tfrac{\sigma}{2}(u_h,1)_{\W_y},q_h)_\W = (AU_h(M-\tfrac{1}{2}M\overline{\zeta}\overline{\zeta}^T),Q_h)_F,
\end{equation}
where $M_{ij} = (\psi_j,\psi_i)_{\W_y}$ and $A_{ij} = (\sigma\phi_j,\phi_i)_{\W_x}$.  Depending on the basis, the matrix $\overline{\zeta}\overline{\zeta}^T$ may be dense, but the action $Z\to Z\overline{\zeta}\overline{\zeta}^T$ can be computed without construction of $\overline{\zeta}\overline{\zeta}^T$ thereby avoide the storage of an $\mathcal{O}(m^2)$ object.  

\subsubsection{Numerical Test}\label{subsub:rad_num_test}

We set $h_x=h_y=1/64$ and $k_x=k_y=1$ so that the number of degrees of freedom in each variable is $m=n=256$.
The timestep is $\Delta t = \frac{1}{12800}$, the final time is $T=3$, and the discrete initial condition is given as a true rank-one matrix $U_h\big|_{t=0}= \overline{\rho_0}\overline{\zeta}^T$ where $\overline{\rho_0}\in\R^{m\times 1}$ is the coefficient representation of $\rho_0$ with respect to $\{\phi_i(x)\}_{i=1}^n$.  
The oversampling parameter $p$ is set to 10.

At time $t=T$, the error between the full-rank forward Euler and SSP-RK3 method is $8.95\times 10^{-6}$.  We choose the tolerance in each algorithm, by trial and error, such that the error between the low-rank approximations and the SSP-RK3 reference solution is roughly $2.5\times 10^{-3}$.    Every method is able to recover the specified accuracy, and the tolerances used to do so are given in \Cref{tab:rad_tols}. 

The solution rank as a function of time for each method is plotted in \Cref{fig:rad-fig-rank}.  The rank of every method except PCRA-Tan is similar over time.  The RAUC method uses the least memrory; the RAPC-Tan method and the RAST method use the most memory; and the memory usage of the other RAPC methods lie somewhere in between.  The rank disparity between the RAPC-Tan integrator and the other two RAPC algorithms is larger than in the \Cref{test:solid_body}; this is likely because the rank of the tangent integrator (\Cref{subsec:tan}) can double every timestep.  If the residual error is still not small enough, then the RAPC algorithm will then add more vectors onto to discrete approximation yielding a large rank solution. 

\begin{table}[h]
    \centering
    \begin{tabular}{c|c c c c c} \hline
 Method & RAUC & RAST & RAPC-UC & RAPC-Tan & RAPC-Proj \\ \hline
 $\tau$ & $47700\Delta t^2$ & $47100\Delta t^2$ & $56000\Delta t^2$ & $165000\Delta t^2$ & $58000\Delta t^2$ \\ 
 & 2.91e-4 & 2.87e-4 & 3.4e-4 & 1.00e-3 & 3.54e-4 \\ \hline
Error &  2.51e-3 & 2.71e-3 & 2.36e-3 & 2.22e-3 & 2.39e-3  \\ \hline
\end{tabular}
    \caption{
    Tolerances and errors for all five rank-adaptive methods in \Cref{subsect:rad}.  The errors are computed at $T=\pi$ against the full-rank SSP-RK3 discretization. $\Delta t = 1/128000$.
    }
    \label{tab:rad_tols}
\end{table}

\begin{figure}[ht]
    \centering
    \includegraphics[width=0.49\textwidth]{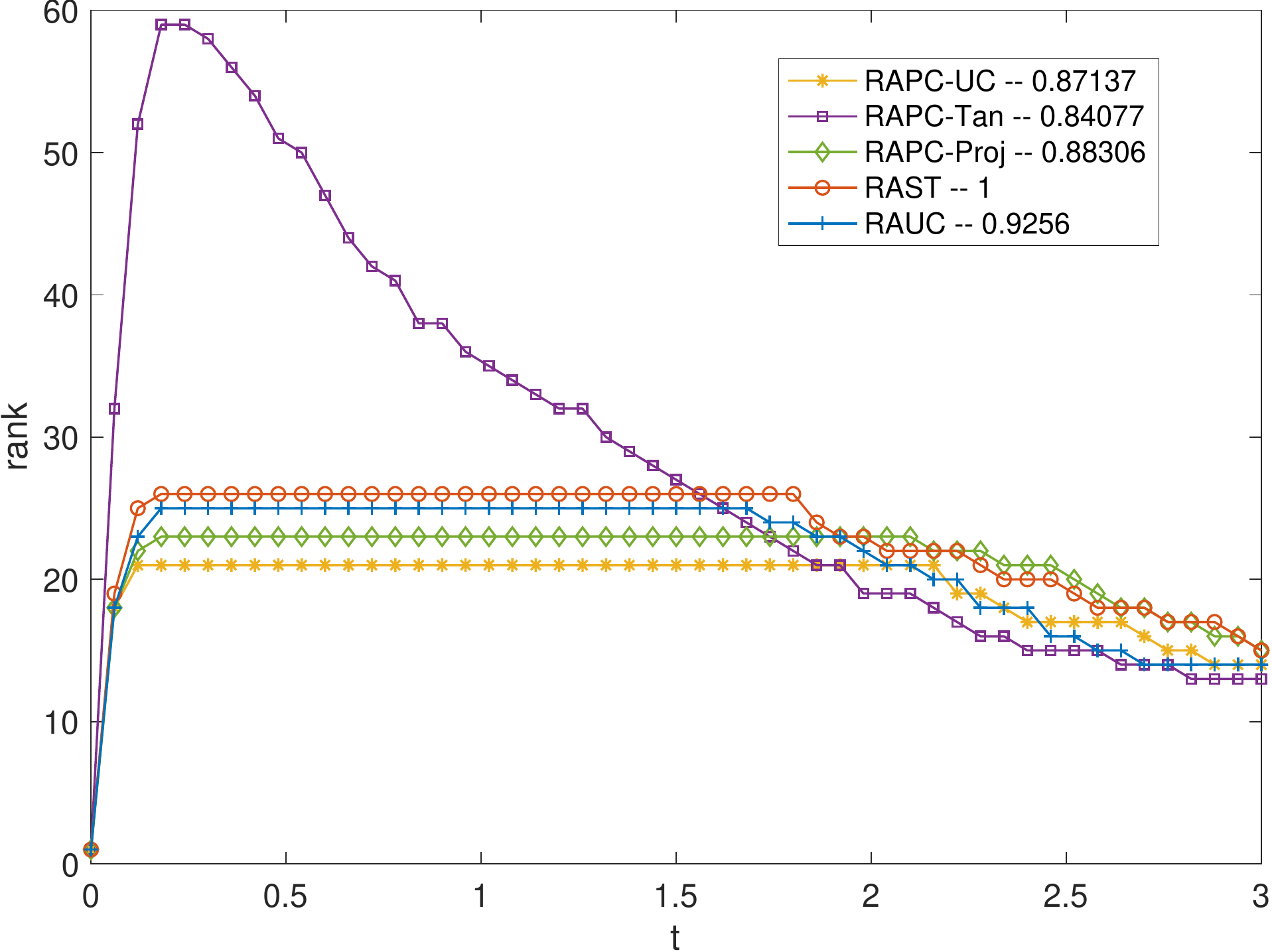}
    \includegraphics[width=0.49\textwidth]{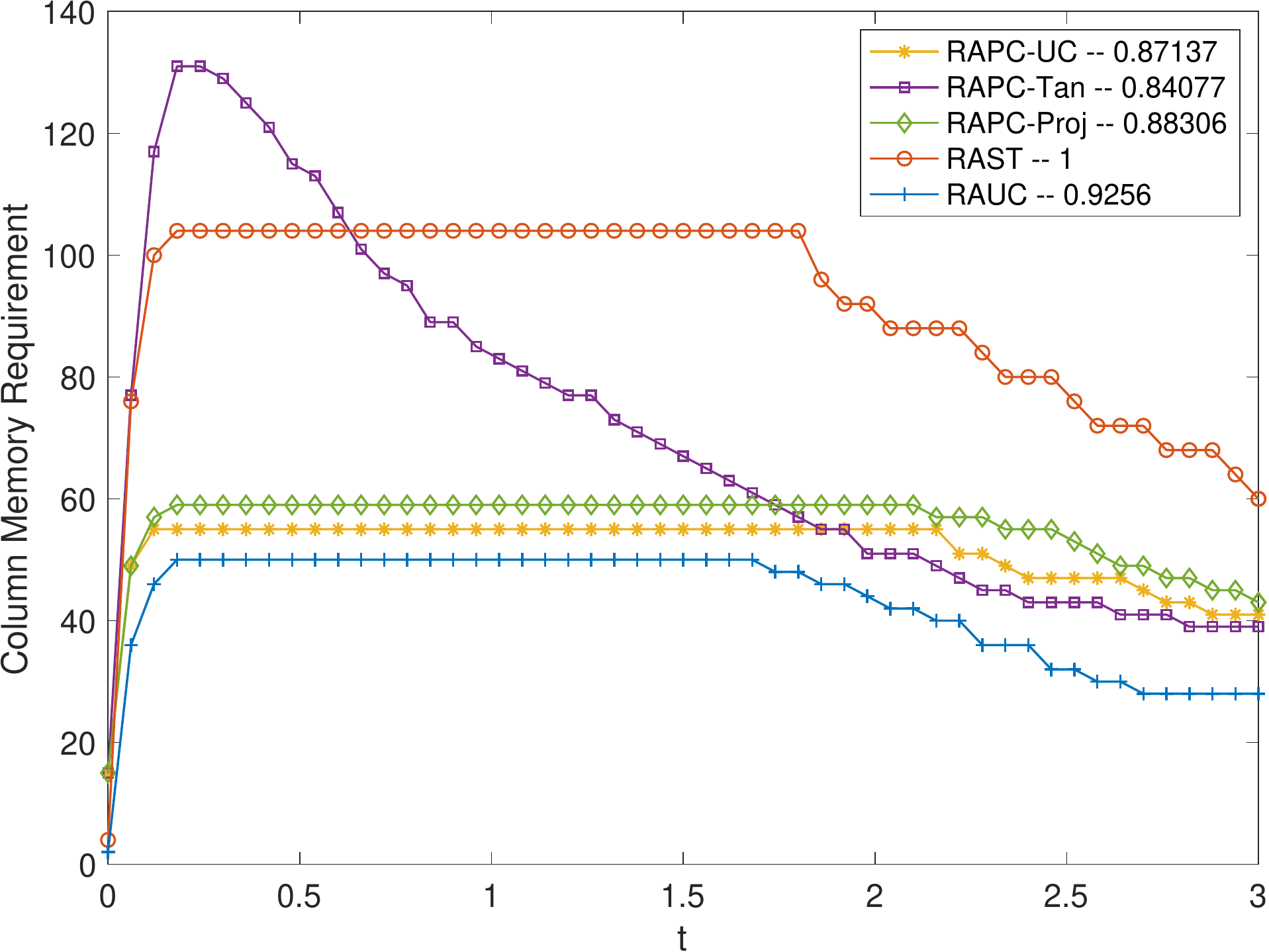}
    \caption{Rank (left) and memory requirements (right) of each of the five methods over time for the advection-relaxation problem in \Cref{subsect:rad}. The tolerances for each adaptive algorithm are given in \Cref{tab:rad_tols} and are chosen to produce errors near $6.5\times 10^{-3}$ when measured against the SSP-RK3 full-rank solution. The memory footprint is calculated using \Cref{tab:mem_comp}. For readability, the data is plotted at every 160 time steps.  The legend labels provide the error of the method, normalized with respect to the error of the RAST algorithm.}
    \label{fig:rad-fig-rank}
\end{figure}

\subsubsection{Statistical Tests}\label{subsub:rad-rand}

It turns out that the advection-relaxation problem is much more sensitive to the errors created by the randomized SVD than the solid-body rotation problem from \Cref{test:solid_body}. To demonstrate this behavior, we consider a simulation with $h_x=h_y=1/64$, $k_x=k_y=1$, $T=2$, and $\Delta t = 1/6400$.  The tolerance for each of the RAPC methods is set to $40000\Delta t^2 \approx 9.77\times 10^{-4}$ which produces errors that are roughly between $6\times 10^{-3}$ and $7\times 10^{-3}$.  For a given RAPC algorthm and oversampling parameter $p$, we run the algorithm 20 times and save the error with respect the SSP-RK3 full-rank reference solution.  We repeat this procedure for all three RAPC algorithms and $p \in \{5,7,10,20,30\}$.  
A statistical summary of the results is plotted in \Cref{fig:rad-rand-data}.  The RAPC-UC method gives the expected result:  increasing the oversampling parameter increases the accuracy in the randomized SVD (see \Cref{prop:rand-svd-error}) and reduces the statistical variation.  The standard deviation for the RAPC-Proj method also correlates with $p$, but not as strongly.  Meanwhile, the RAPC-Tan results do not appear to correlate with $p$ at all. 

\begin{figure}[ht]
    \centering
\begin{subfigure}[t]{0.32\textwidth}
 \includegraphics[width=0.9\textwidth]{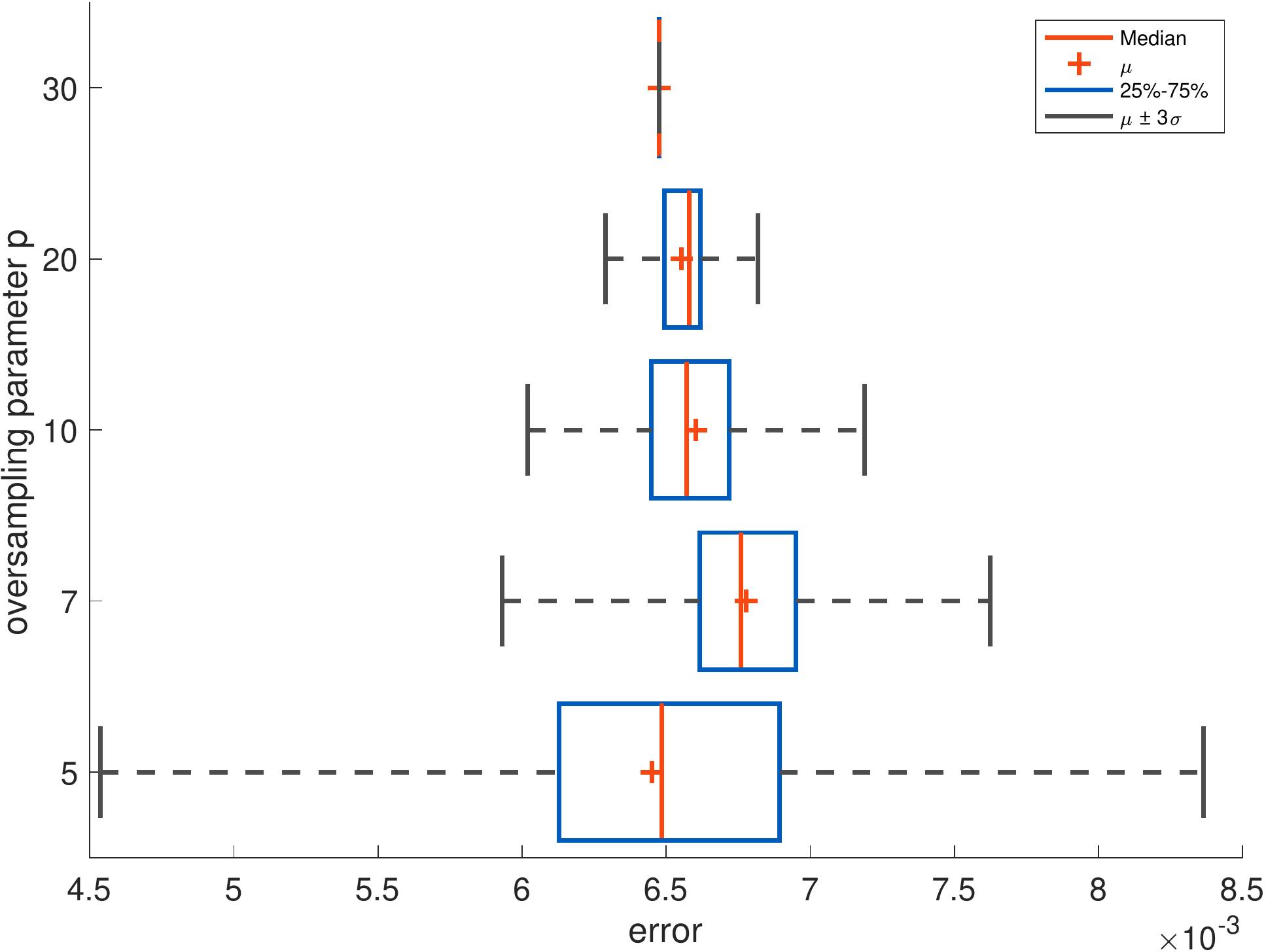}
 \caption{RAPC-UC}
\end{subfigure}
\hfill
\begin{subfigure}[t]{0.32\textwidth}
 \includegraphics[width=0.9\textwidth]{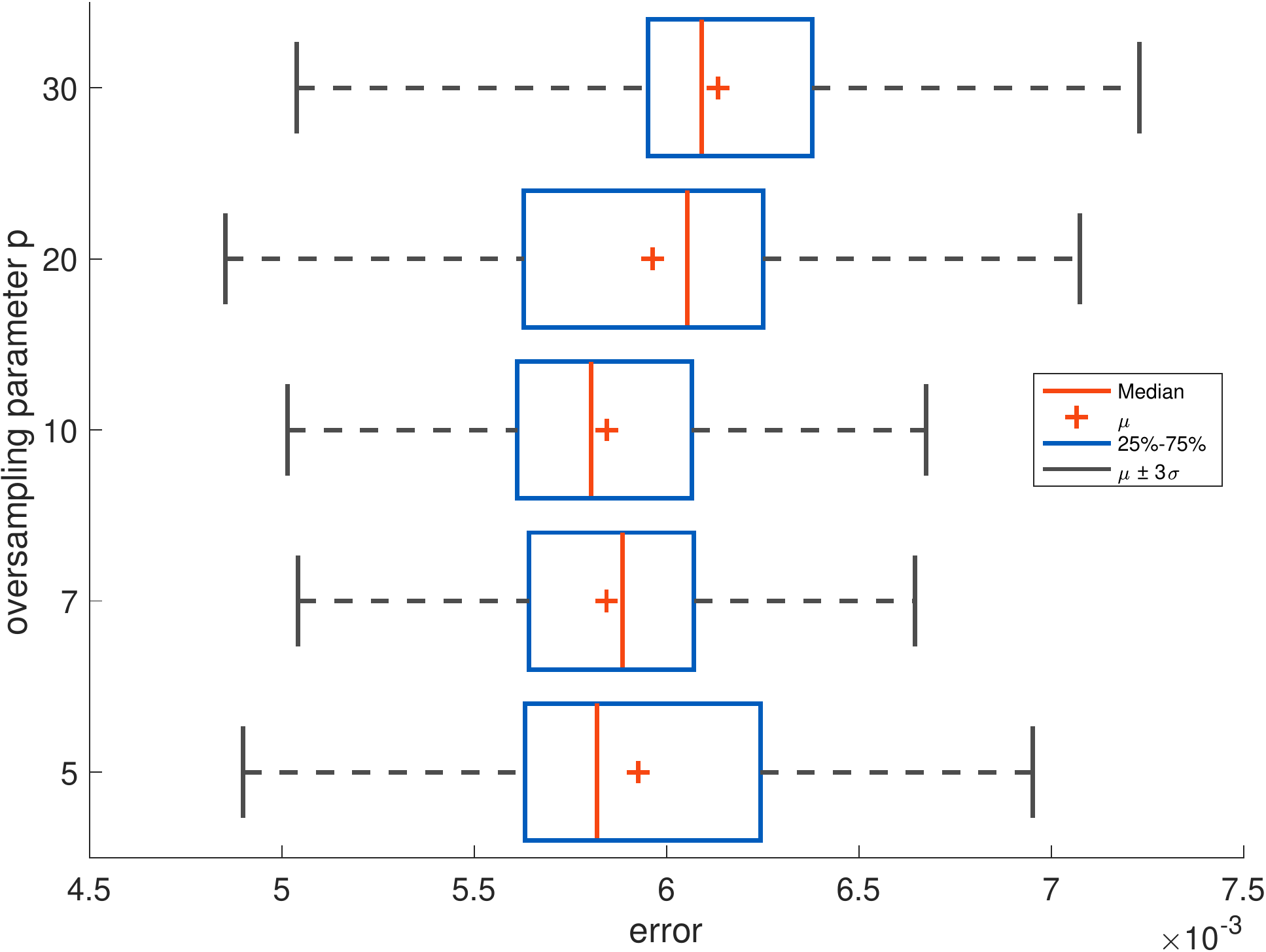}
  \caption{RAPC-Tan}
\end{subfigure}
\hfill
\begin{subfigure}[t]{0.32\textwidth}
 \includegraphics[width=0.9\textwidth]{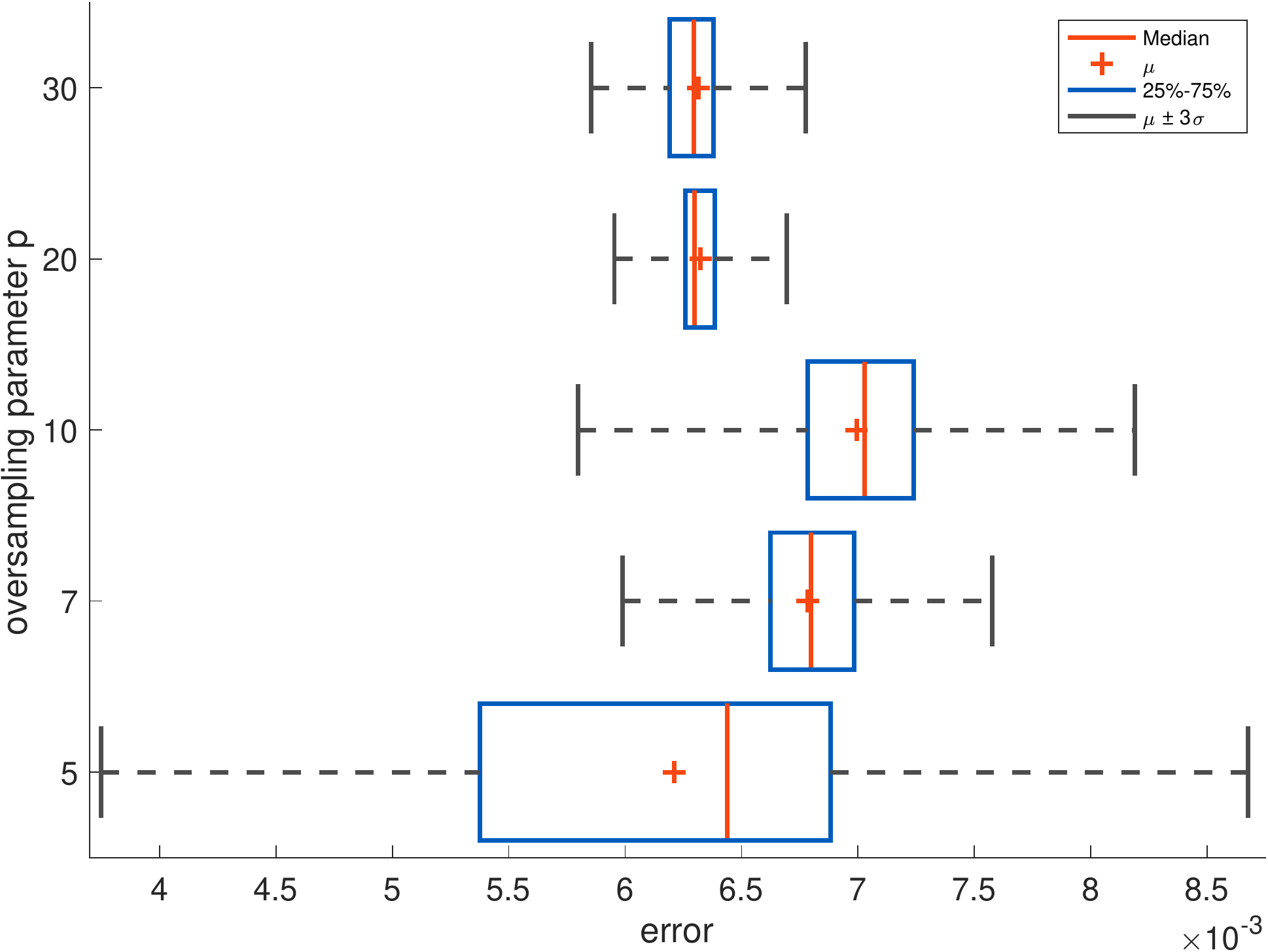}
 \caption{RAPC-Proj}
\end{subfigure}
    \caption{Statistical data for the advection-relaxation problem given in \Cref{subsect:rad}. For each oversampling parameter listed on the $y$-axis, 20 runs are computed.  The errors are created via comparison to the SSP-RK3 full-rank reference solution.  Parameters for the test are listed in \Cref{subsub:rad-rand}. }
    \label{fig:rad-rand-data}
\end{figure}

While the randomness of the randomized SVD may increase the sensitivity of the algorithm, the process of culling process is also very sensitive to the tolerance chosen.  
To demonstrate, using the parameters in \Cref{subsub:rad_num_test}, we only consider the methods which are deterministic: the RAUC and RAST methods.  In \Cref{fig:rad-fra-lub-data} results are displayed for the both of these algorithms and the tolerances used are within 1\% of the tolerances given in \Cref{tab:rad_tols}.  The maximum rank of the RAST test varies significantly as the tolerance is changed by only a slight amount.  As the tolerance increases, the algorithms should be willing to cut more and more rank which should yield lower rank solutions -- this trend does happen on the longer time scales.  However, during the initial rise this trend is reversed: the higher tolerance methods actually require more rank in the beginning.  The errors in both methods are not monotonic with respect to the tolerance as well; the RAUC intregator gives the best error with the largest tolerance of the sample.  This test shows that with all of these algorithms -- random or not -- choosing the tolerance is a delicate process and can greatly affect the rank and accuracy of the corresponding solution.  

\begin{figure}[H]
    \centering
    \includegraphics[width=0.49\textwidth]{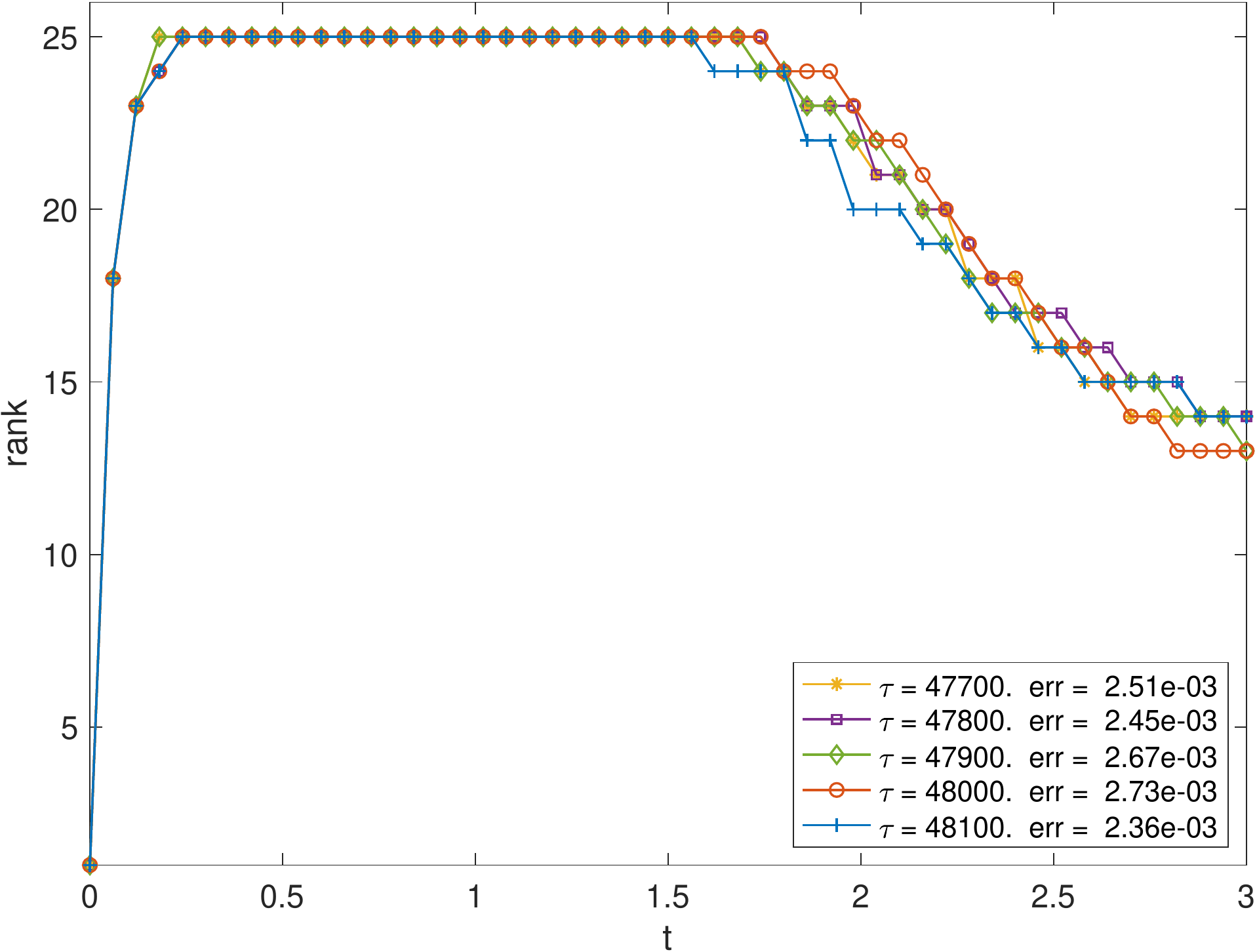}
    \includegraphics[width=0.49\textwidth]{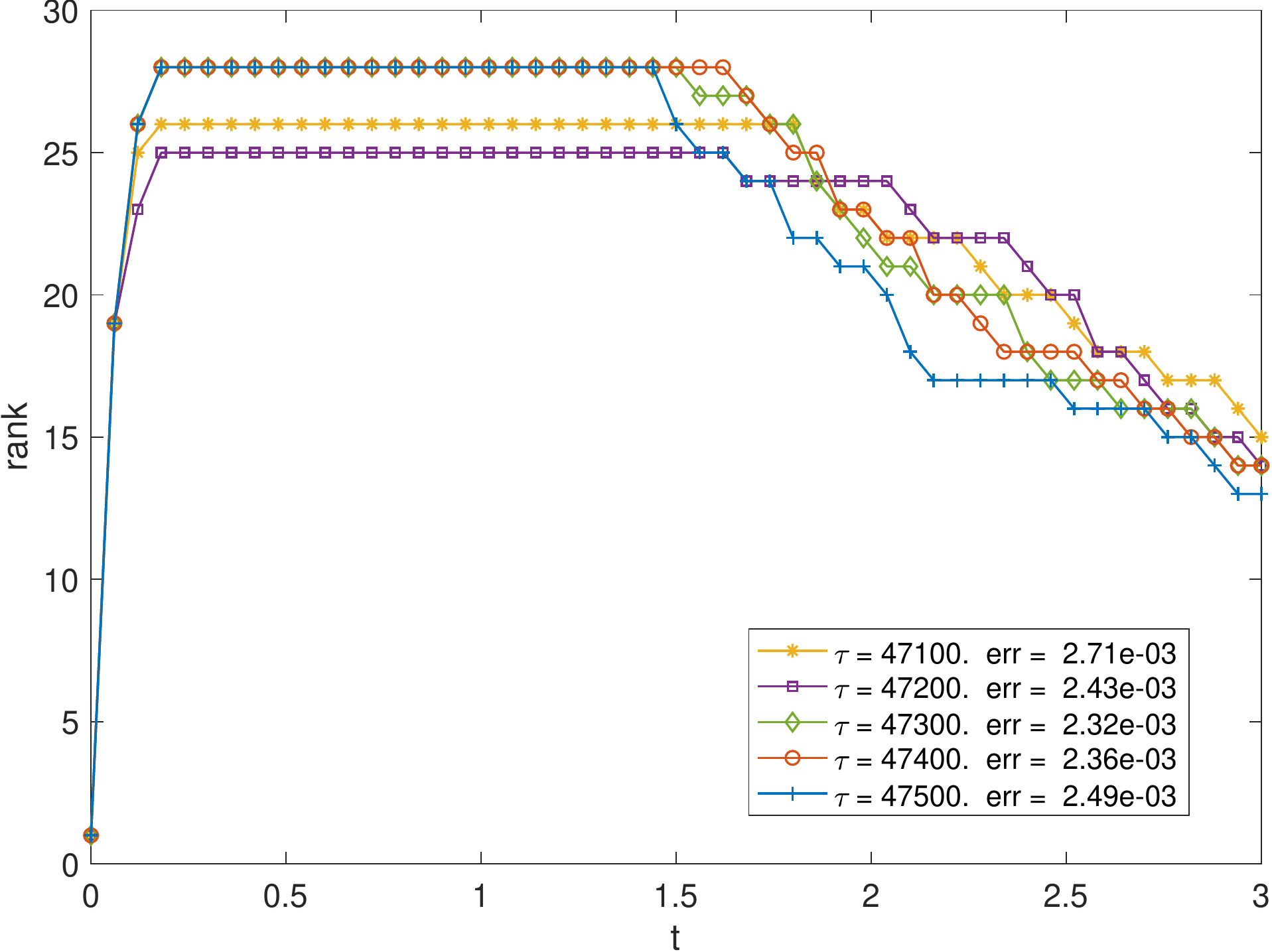}
    \caption{Rank for the radiation transport problem given in \Cref{subsect:rad} for the RAUC (left) and RAST (right) algorithms with varied tolerances given in the legend.    The errors are created via comparison to the SSP-RK3 full-rank reference solution.}
    \label{fig:rad-fra-lub-data}
\end{figure}

\subsection{Test Problem 3 - Rotation, Relaxation, and Localized Sources}\label{subsect:sources}

We consider the following PDE with sources:

\begin{subequations}
\begin{alignat}{3} \label{eqn:sources_pde}
    \frac{\partial u}{\partial t} - y\cdot\grad_xu + x\cdot\grad_y u + \frac{1}{\e}(u-Pu) &= S,
    &&\qquad(x,y)\in\W, &&\quad t>0; \\
    u(x,y,t) &= 0,
    &&\qquad(x,y) \in\partial\W_x^-, &&\quad  t>0; \\
    u(x,y,t) &= 0,
    &&\qquad(x,y) \in\partial\W_y^-, &&\quad t>0; \\
    u(x,y,0) &= u_0(x,y),
    &&\qquad(x,y)\in\W &&
\end{alignat}
\end{subequations}
where $\e>0$, $Pu = \frac{1}{2}\int_{\W_y} u(x,y) \dx[y]$, $u_0$ is given in \eqref{eqn:solid_body_box_IC}, and
$S$ is given by
\begin{align}\label{eqn:source_def}
\begin{split}
    S(x,y) &= -(\chi_{E_1}(x)+\chi_{E_2}(x)+\chi_{E_3}(x)+\chi_{E_4}(x)+\chi_{E_5}(x)) \\
    &\qquad \cdot(\chi_{E_1}(y)+\chi_{E_2}(y)+\chi_{E_3}(y)+\chi_{E_4}(y)+\chi_{E_5}(y))
\end{split}
\end{align}
where $E_i=\left[\frac{20(i-3)-3}{60},\frac{20(i-3)+3}{60}\right]$ for $i=1,\ldots,5$.  The discretization and matrix evaluations of all terms on the left-hand side of \eqref{eqn:sources_pde} have been treated in \Cref{sect:mateval} and \Cref{subsect:rad}. 
Here $S_h$, the matrix representation of the $L^2$ projection of $S$ onto $V_h$ is a rank-one matrix and thus can be stored in a low-memory format. 

For this test we set $h_x=h_y=1/128$ and $k_x=k_y=0$ which yield 256 degrees of freedom in each direction.  Additionally, we set $\e=1/5$, $T=\pi$, and $\Delta t = 1/1024$.  The goal of this test is to see how well the adaptive integrators can resolve the high frequency source $S$.  The oversampling parameter $p$ is set to 15.

At $t=\pi$, the error between the full-rank forward Euler and SSP-RK3 approximations is $3.94\times 10^{-5}$.   The tolerance for each of the five methods is given in \Cref{tab:source_tols} and is chosen to produce an error of approximately $1.50\times 10^{-3}$. \Cref{fig:source-fig-rank-pi} provides the rank of the methods over time.  In this case the rank of each method quickly rises and then plateaus as the projection operator forces the solution to a steady state.  All methods but PCRA-Tan arrive at the same final rank.  Similar to \Cref{subsect:rad}, the RAUC method is able to capture the equilibrium with the smallest memory footprint. 

\begin{table}[h]
    \centering
    \begin{tabular}{c|c c c c c} \hline
 Method & RAUC & RAST & RAPC-UC & RAPC-Tan & RAPC-Proj \\ \hline
 $\tau$ & $90\Delta t^2$ & $100\Delta t^2$ & $125\Delta t^2$ & $1300\Delta t^2$ & $120\Delta t^2$ \\ 
 & 8.58e-5 & 9.54e-5 & 1.19e-5 & 1.24e-3 & 1.14e-4 \\ \hline
Error &  1.5e-3 & 1.50e-3 & 1.50e-3 & 1.72e-3 & 1.50e-3  \\ \hline
\end{tabular}
    \caption{
    Tolerances and Errors for all five rank-adaptive methods in \Cref{subsect:sources}.  The errors are computed at $T=\pi$ against the full-rank SSP-RK3 discretization. $\Delta t = 1/1024$.
    }
    \label{tab:source_tols}
\end{table}

\begin{figure}[ht]
    \centering
    \includegraphics[width=0.49\textwidth]{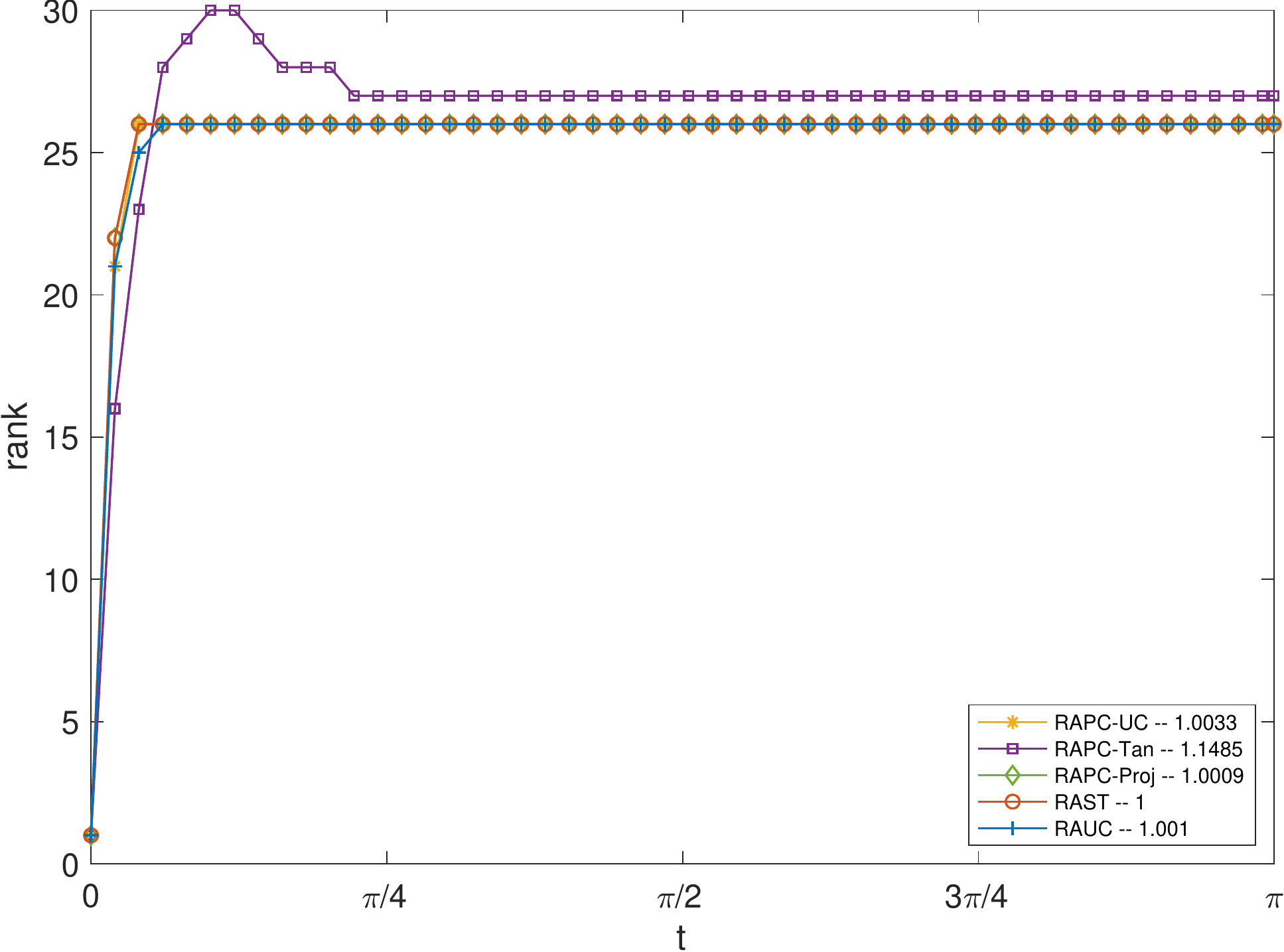}
    \includegraphics[width=0.49\textwidth]{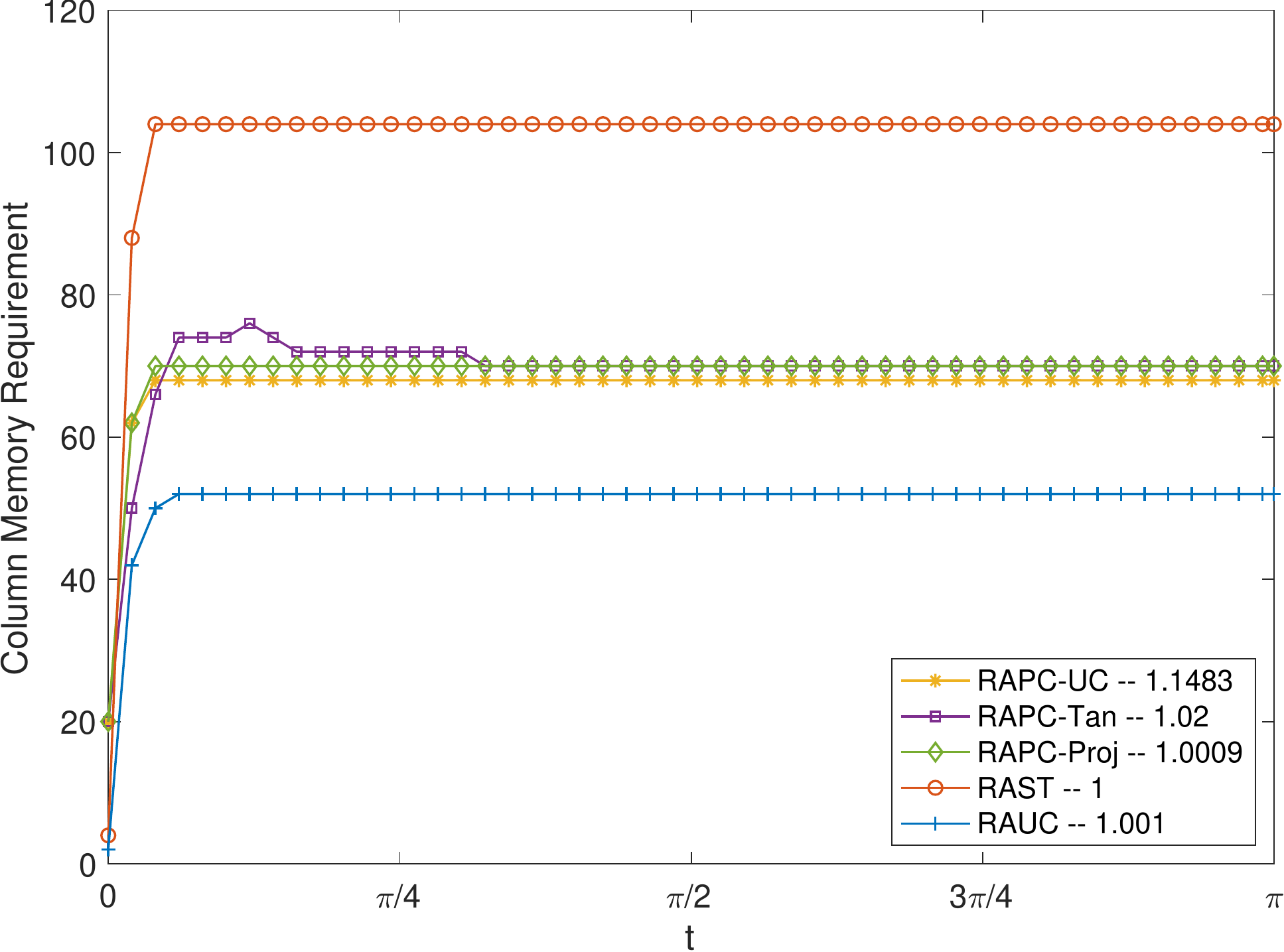}
    \caption{Rank (left) and memory requirements (right) of each of the five methods over time for the problem given in \Cref{subsect:sources} with $\Delta t = 1/1024$. The tolerances for the adaptive algorithms are given in \Cref{tab:source_tols} and is chosen to produce errors near $1.50\times 10^{-3}$ when measured against an SSP-RK3 reference solution.  The memory footprint is calculated using \Cref{tab:mem_comp}.   For readability, the data is plotted at every 65 timesteps. The legend labels each algorithm as well as the ratio of the method's error over the RAST algorithm error which is used as a reference solution.}
    \label{fig:source-fig-rank-pi}
\end{figure}

\begin{figure}[ht]
    \centering
    \includegraphics[width=0.49\textwidth]{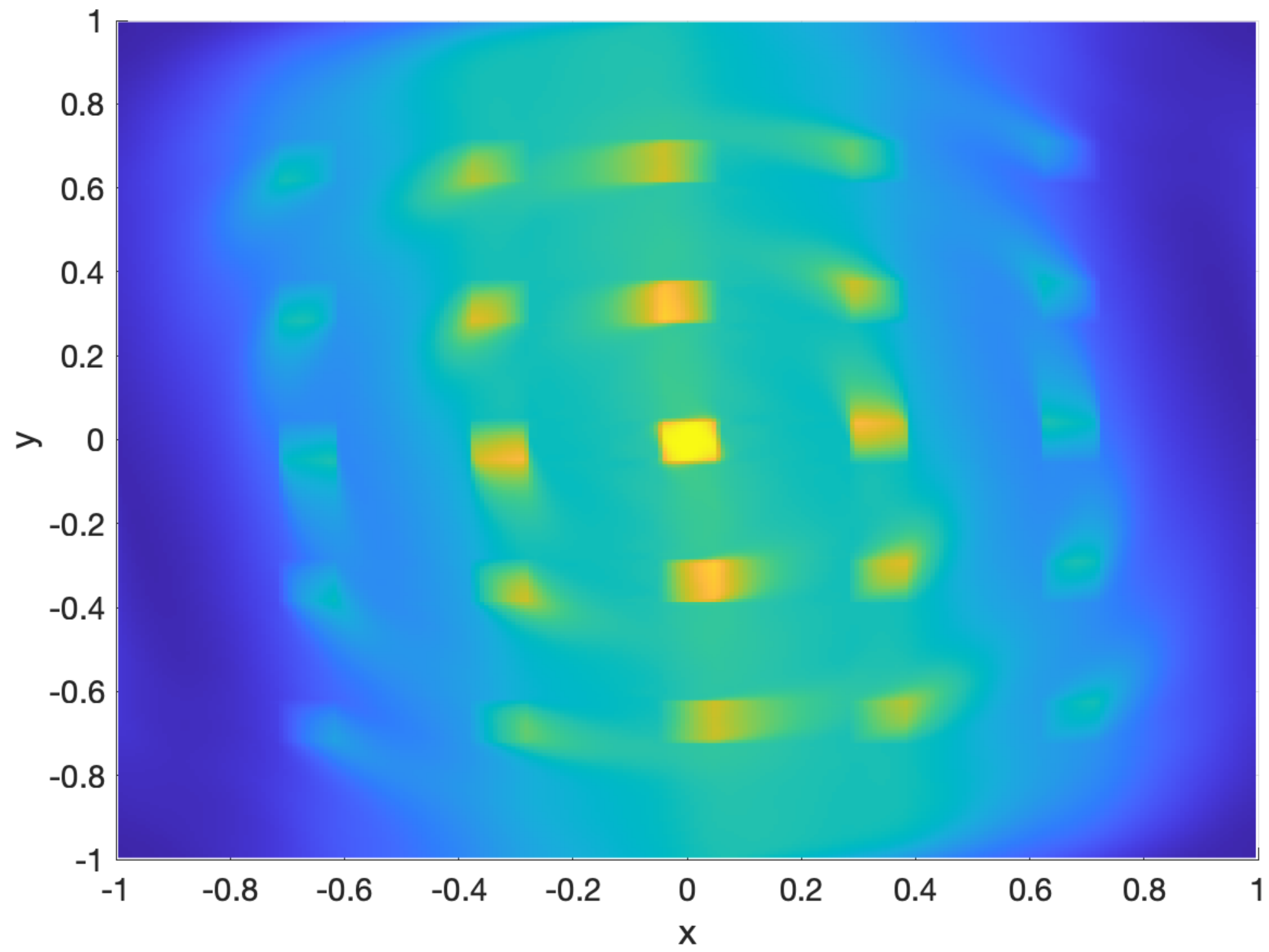}
    \includegraphics[width=0.49\textwidth]{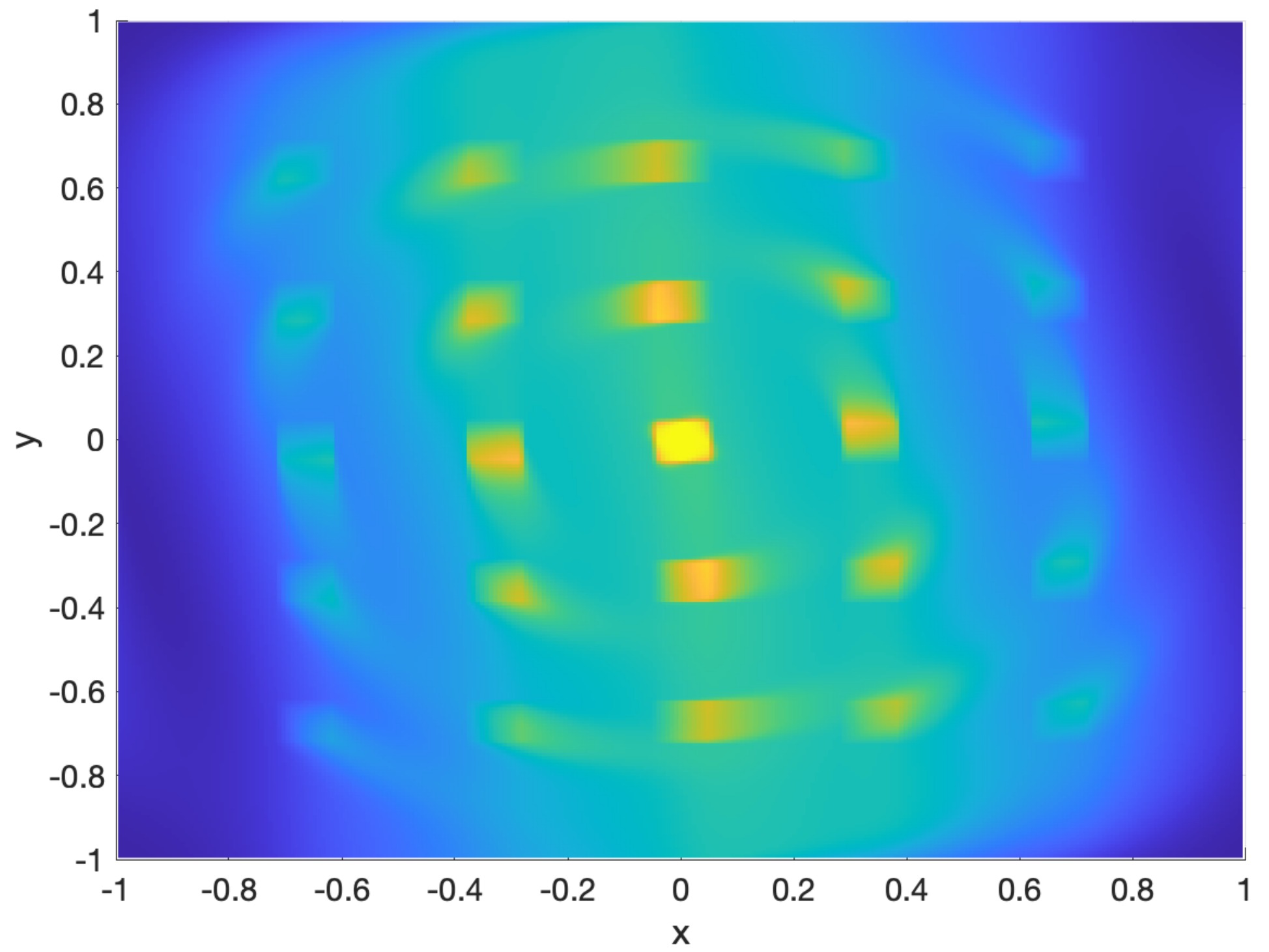}
    \caption{Plots of the discrete approximations at $t=\pi$ for the problem given in \Cref{subsect:sources} with the RAPC-Proj Algorithm (left) with a rank of 26 and the full-rank SSP-RK3 method (right). The tolerance for the adaptive algorithm is given in \Cref{tab:source_tols}.}
    \label{fig:source-solns}
\end{figure}

\section{Conclusion}

In this paper, we introduced a predictor-corrector strategy that modifies the rank of the DLRA solution by building a low-rank approximation of the error made against the full rank forward Euler method.  We presented several numerical results showing our method is accurate in resolving the modelling error of the low-rank projection and that the methods rank is comparable with other rank-adaptive DLRA methods.  The paper also included a discussion on how a variety of PDEs discretized by the discontinuous Galerkin method can be rewritten to fit the DLRA framework. Future topics include:
\begin{itemize}
    \item Building higher order methods in time using the predictor-corrector strategy with forward Euler timestepping as a building block.  This approach is similar to the work in \cite{kieri2019projection}. 
    \item Using more advanced randomized techniques, such as power iterations \cite{halko2011finding}, to diminish the effects of randomness on the solution.
    \item Extensions to dynamic low-rank approximation on higher order tensors \cite{koch2010dynamical}.
\end{itemize}

\section{Appendix}

\begin{proof}[Proof of \Cref{prop:A_low_rank}]
We focus on \eqref{eqn:adv_decomp_3x}-\eqref{eqn:adv_decomp_4y}.  Let $w_h=\phi_i\psi_j$ and $q_h=\phi_k\psi_l$. 
The portion of $\mA$ that \eqref{eqn:adv_decomp_3x}-\eqref{eqn:adv_decomp_4y} separates, namely, 
\begin{align}\label{eqn:flux_x}
- (xw_h,\grad_x q_h)_{\W} + \left<x\lavg w_h\ravg+\tfrac{1}{2}|x|\ljmp w_h\rjmp,\ljmp q_h\rjmp\right>_{\W_x\times\mE_{y,h}^\mathrm{I}} + \left<xw_h,\bn q_h\right>_{\partial\W_y^+}
\end{align}
is not separable because the flux depends on the sign of $x$.  To create the minimal number of separable terms, we split the mass integral $(x\phi_i,\psi_k)_{\W_x}$ about $x=0$ to obtain $(x\phi_i,\psi_k)_{\W_x}= (x\phi_i,\psi_k)_{\{x > 0\}} + (x\phi_i,\psi_k)_{\{x < 0\}}$.  Splitting \eqref{eqn:flux_x} yields
\begin{align}\label{eqn:flux_x_split_I1I2}
- (xw_h,\grad_y q_h)_{\W} + \left<x\lavg w_h\ravg\right.&\left.+\,\tfrac{1}{2}|x|\ljmp w_h\rjmp,\ljmp q_h\rjmp\right>_{\W_x\times\mE_{y,h}^\mathrm{I}} + \left<xw_h,\bn q_h\right>_{\partial\W_y^+} = I_1 + I_2
\end{align} 
where
\begin{align}
    I_1 &= -(xw_h,\grad_y q_h)_{\{x > 0\}\times\W_y} + \left<x\lavg w_h\ravg+\tfrac{1}{2}|x|\ljmp w_h\rjmp,\ljmp q_h\rjmp\right>_{\{x > 0\}\times\mE_{y,h}^\mathrm{I}} + \left<xw_h,\bn q_h\right>_{\partial\W_y^+\cap\{x > 0\}\times\partial\W_y} \label{eqn:flux_x_split_1}\\
    I_2 &= -(xw_h,\grad_y q_h)_{\{x < 0\}\times\W_y} + \left<x\lavg w_h\ravg+\tfrac{1}{2}|x|\ljmp w_h\rjmp,\ljmp q_h\rjmp\right>_{\{x < 0\}\times\mE_{y,h}^\mathrm{I}} + \left<xw_h,\bn q_h\right>_{\partial\W_y^+\cap\{x < 0\}\times\partial\W_y} \label{eqn:flux_x_split_2}
\end{align}
We focus on $I_1$.  For $x>0$, $|x| = x$ and thus the flow direction is independent of $x$. Moreover, \eqref{eqn:inflow_y_def} and the sign of $x$ implies the outflow boundary is only on the top of $\W_y$.  Thus
\begin{align}\label{eqn:outflow_x_change}
    \partial\W_y^+\cap\{x > 0\}\times\partial\W_y = \{x > 0\}\times\{y = L\}
\end{align}
Substituting \eqref{eqn:outflow_x_change} into \eqref{eqn:flux_x_split_1} yields
\begin{align}\label{eqn:x-split1}
\begin{split}
I_1 &= 
-(x\phi_i\psi_j,\grad_y(\phi_k\psi_l))_{\{x > 0\}\times\W_y} + \big<x\lavg \phi_i\psi_j\ravg+\tfrac{1}{2}|x|\ljmp \phi_i\psi_j\rjmp,\ljmp \phi_k\psi_l\rjmp\big>_{\{x>0\}\times\mE_{y,h}^\mathrm{I}}\\
&\quad+ \left<x\phi_i\psi_j,\bn \phi_k\psi_l\right>_{\{x > 0\}\times\{y = L\}} \\
&=-(x\phi_i,\psi_k)_{\{x > 0\}}(\psi_j,\grad_y\psi_l)_{\W_y} \\
&\quad+(x\phi_i,\psi_k)_{\{x > 0\}}\left<\lavg \psi_j\ravg+\tfrac{1}{2}\ljmp \psi_j\rjmp,\ljmp \psi_l\rjmp\right>_{\mE_{y,h}^\mathrm{I}} \\
&\quad+ (x\phi_i,\psi_k)_{\{x > 0\}}\left<\psi_jn,\psi_l\right>_{\{y = L\}} \\
&= (x\phi_i,\psi_k)_{\{x > 0\}}\big(-(\psi_j,\grad_y\psi_l)_{\W_y} +\left<\lavg \psi_j\ravg+\tfrac{1}{2}\ljmp \psi_j\rjmp,\ljmp \psi_l\rjmp\right>_{\mE_{y,h}^\mathrm{I}} + \left<\psi_jn,\psi_l\right>_{\{y = L\}} \big) \\
&= \mB_{3,x}(\phi_i,\phi_k)\mB_{3,y}(\psi_j,\psi_l).
\end{split}
\end{align}
We now focus on $I_2$.  Since $x<0$, $|x|=-x$ and 
\begin{align}\label{eqn:outflow_x_change_2}
    \partial\W_y^+\cap\{x < 0\}\times\partial\W_y = \{x < 0\}\times\{y = -L\}
\end{align}
Therefore similar to the derivation of \eqref{eqn:x-split1} we have
\begin{align}\label{eqn:x-split2}
\begin{split}
I_2 &= 
-(x\phi_i\psi_j,\grad_y(\phi_k\psi_l))_{\{x < 0\}\times\W_y} + \big<x\lavg \phi_i\psi_j\ravg+\tfrac{1}{2}|x|\ljmp \phi_i\psi_j\rjmp,\ljmp \phi_k\psi_l\rjmp\big>_{\{x<0\}\times\mE_{y,h}^\mathrm{I}}\\
&\quad+ \left<x\phi_i\psi_j,\bn \phi_k\psi_l\right>_{\{x < 0\}\times\{y = -L\}} \\
&=-(x\phi_i,\psi_k)_{\{x < 0\}}(\psi_j,\grad_y\psi_l)_{\W_y} \\
&\quad+(x\phi_i,\psi_k)_{\{x < 0\}}\left<\lavg \psi_j\ravg-\tfrac{1}{2}\ljmp \psi_j\rjmp,\ljmp \psi_l\rjmp\right>_{\mE_{y,h}^\mathrm{I}} \\
&\quad+ (x\phi_i,\psi_k)_{\{x < 0\}}\left<\psi_jn,\psi_l\right>_{\{y = -L\}} \\
&= (x\phi_i,\psi_k)_{\{x < 0\}}\big(-(\psi_j,\grad_y\psi_l)_{\W_y} +\left<\lavg \psi_j\ravg-\tfrac{1}{2}\ljmp \psi_j\rjmp,\ljmp \psi_l\rjmp\right>_{\mE_{y,h}^\mathrm{I}} + \left<\psi_jn,\psi_l\right>_{\{y = -L\}} \big) \\
&= \mB_{4,x}(\phi_i,\phi_k)\mB_{4,y}(\psi_j,\psi_l).
\end{split}
\end{align}
\eqref{eqn:adv_decomp_1x}-\eqref{eqn:adv_decomp_2y} can be similarly shown.  The proof is complete.
\end{proof}

\begin{proof}[Proof of \Cref{prop:G_low_rank}]
By \eqref{eqn:u_coeff_formula}, \eqref{eqn:uh_coords}, \eqref{eqn:g_h_def}, and \eqref{eqn:G_def}, we have
\begin{align}
\begin{split} \label{eqn:prop:3.4.1}
    (G_h(t))_{ij} &= g_h(t)^{ij} = (g_h(t),\phi_i\psi_j)_\W = \mathcal{G}(\phi_i\psi_j,t) \\
    &= \left<y g_x(\cdot,t),\bn \phi_i\psi_j\right>_{\partial\W_x^-} + \left<-x g_y(\cdot,t),\bn \phi_i\psi_j\right>_{\partial\W_y^-}.
\end{split}
\end{align}
Spitting the terms in \eqref{eqn:prop:3.4.1} about $x=0$ and $y=0$ yields
\begin{align}
\begin{split} \label{eqn:prop:3.4.2}
    (G_h(t))_{ij}  
    &= \left<y g_x(\cdot,\cdot,t),\bn \phi_i\psi_j\right>_{\partial\W_x^-\cap \partial\W_x\times\{y > 0\}} + \left<y g_x(\cdot,\cdot,t),\bn \phi_i\psi_j\right>_{\partial\W_x^-\cap \partial\W_x\times\{y < 0\}} \\
    &\quad+ \left<-x g_y(\cdot,\cdot,t),\bn \phi_i\psi_j\right>_{\partial\W_y^-\cap \{x > 0\}\times\partial\W_y} + \left<-x g_y(\cdot,\cdot,t),\bn \phi_i\psi_j\right>_{\partial\W_y^-\cap \{x < 0\}\times\partial\W_y} \\
    &= \left<y g_x(\cdot,\cdot,t),\bn \phi_i\psi_j\right>_{\{x = L\}\times\{y > 0\}} + \left<y g_x(\cdot,\cdot,t),\bn \phi_i\psi_j\right>_{\{ x = -L \}\times\{y < 0\}} \\
    &\quad+ \left<-x g_y(\cdot,\cdot,t),\bn \phi_i\psi_j\right>_{\{x > 0\}\times\{ y = -L \}} + \left<-x g_y(\cdot,\cdot,t),\bn \phi_i\psi_j\right>_{ \{x < 0\}\times\{y = L\}} \\
    &= \left<1,\bn \phi_i\right>_{\{x = L\}}(y g_x(L,\cdot,t),\psi_j)_{\{y > 0\}}
    + \left<1,\bn \phi_i\right>_{\{x = -L\}}(y g_x(-L,\cdot,t),\psi_j)_{\{y < 0\}}\\
    &\quad+ (-x g_y(\cdot,-L,t),\phi_i)_{\{x > 0\}}\left<1,\bn \psi_j\right>_{\{ y = -L \}} 
    + (-x g_y(\cdot,L,t),\phi_i)_{\{x < 0\}}\left<1,\bn \psi_j\right>_{\{ y = L \}} \\
    &:= c_i^1d_j^1(t) + c_i^2d_j^2(t) + c_i^3(t)d_j^3 + c_i^4(t)d_j^4. 
\end{split}
\end{align}
where $c^1,c^2,c^3,c^4\in\R^{m}$ and $d^1,d^2,d^3,d^4\in\R^n$.  \eqref{eqn:prop:3.4.2} yields
\begin{equation}\label{eqn:prop:3.4.3}
    G_h(t) = c^1(t)(d^1)^T + c^2(t)(d^2)^T + c^3(d^3(t))^T + c^4(d^4(t))^T = C(t)S(t)D(t)^T
\end{equation}
where
\begin{equation}
    C(t) = [c^1(t),c^2(t),c^3,c^4],\quad S(t) = I_{4\times 4},\quad D(t) = [d^1,d^2,d^3(t),d^4(t)].
\end{equation}
Since $C$ and $D$ are not guaranteed to be linearly independent, then it is readily seen that the rank of $G_h$ is at most 4.  The proof is complete.
\end{proof}

\bibliographystyle{plain}
\bibliography{references}

\end{document}